\newcommand{\dx}{\, \mathrm{dx}}
\newcommand{\ds}{\, \mathrm{ds}}
\newcommand{\be}{\begin{equation}}
\newcommand{\ee}{\end{equation}}
\newcommand{\been}{\begin{eqnarray*}}
\newcommand{\eeen}{\end{eqnarray*}}
\newcommand{\Omref}{\hat \Omega}
\newcommand{\hatXh}{\hat{\mathcal{U}}_h}
\theoremstyle{plain}
\newtheorem{thm}{Theorem}[section]
\newtheorem{lemma}[thm]{Lemma}
\theoremstyle{plain}
\newtheorem{remark}{Remark}[section]
\newtheorem{defn}{Definition}[section]
\newtheorem{alg}{Algorithm}[section]
\numberwithin{equation}{section}
\providecommand{\keywords}[1]{\textbf{\textit{Keywords:}} #1}
\providecommand{\subjclass}[1]{\textbf{\textit{MSC subject classification:}} #1}
\newcommand{\R}{\mathbb{R}}
\DeclareMathOperator{\Div}{div}
\DeclareMathOperator*{\argmin}{arg\,min}
\DeclareMathOperator{\id}{id}
\begin{document}

\title{Convergence of a steepest descent algorithm in shape optimisation using $W^{1,\infty}$ functions}
\author[1]{Klaus Deckelnick}
\affil[1]{Otto-von-Guericke-University Magdeburg, Department of Mathematics, Universit\"atsplatz 2, 39106 Magdeburg}
\author[2]{Philip J.~Herbert}
\affil[2]{Department of Mathematics, University of Sussex, Brighton, BN1 9RF, United Kingdom}
\author[3]{Michael Hinze\footnote{This work is part of the project P8 of the German Research Foundation Priority Programme 1962, whose support is gratefully acknowledged by the second and the third author.}}
\affil[3]{Mathematical Institute, University of Koblenz, Universit\"atsstr. 1, D-56070 Koblenz}
\date{\today}

\maketitle
\begin{abstract}
    Built upon previous work of the authors in \hyperlink{cite.0@DecHerHin21}{Deckelnick, Herbert, and Hinze, ESAIM: COCV 28 (2022)}, we present a general shape optimisation framework based on the method of mappings in the $W^{1,\infty}$ topology together with a suitable finite element discretisation. For the numerical solution of the respective discrete shape optimisation problems we propose a steepest descent minimisation algorithm with Armijo-Goldstein stepsize rule. We show that the sequence generated by this descent method globally converges, and under mild assumptions also, that every accumulation point of this sequence is a stationary point of the shape functional. Moreover, for the mesh discretisation parameter tending to zero we under mild assumptions prove convergence of the discrete stationary shapes in the Hausdorff complementary metric.
    To illustrate our approach we present a selection of numerical examples for PDE constrained shape optimisation problems, where we include numerical convergence studies which support our analytical findings.
\end{abstract}

\noindent\keywords{PDE constrained shape optimisation, $W^{1,\infty}$-steepest-descent, global convergence, finite element discretisation}\\
\noindent\subjclass{ 35Q93, 49Q10, 49J20}
\\
\renewcommand{\thefootnote}{\arabic{footnote}}

\section{Introduction}\label{intro}

We are interested in the numerical approximation of PDE constrained shape optimisation.
Our prototype problem will be of the form
\begin{equation}\label{prob0}
    \min \mathcal{J}(\Omega):= \int_\Omega j(\cdot, u,\nabla u) \dx,\, \Omega \in \mathcal{S},
\end{equation}
where $j$ is a real-valued function whose properties will be specified in Section \ref{sec:Preliminaries} and $u$ weakly solves the Poisson problem
\[
-\Delta u=f \text{ in } \Omega, \quad u=0 \text{ on } \partial \Omega.
\]
Furthermore, $\mathcal{S}$ is a collection of admissible domains contained in a given hold-all domain $D \subset \mathbb R^d$. We use the method of mappings and assume that each $\Omega \in \mathcal S$ is
represented by a bi--Lipschitz mapping $\Phi\colon D \rightarrow D$ as $\Omega=\Phi(\Omref)$, where $\Omref \Subset D$ is a fixed reference domain.
For domain variations one seeks a mapping $V^* \in W^{1,\infty}_0(D,\mathbb R^d)$ which forms a descent direction for the shape derivative, i.e. satisfies $\mathcal{J}'(\Omega)[V^*]<0$. The new domain is then obtained as $\Omega_{\rm new} = ({\rm id} + \alpha V^*)(\Omega)$ with $\alpha>0$ chosen suitably to ensure that the map $ \id + \alpha V^*$ is bi-Lipschitz. A common approach to determine a descent direction is to work in a Hilbert space $H \hookrightarrow W^{1,\infty}(D,\mathbb R^d)$ 
and then to find $V^*$ as the corresponding Riesz representative of $\mathcal J'(\Omega)$. Depending on the space dimension this may require the use of Sobolev spaces $H^m(D,\mathbb R^d)$ with a larger $m \in \mathbb N$ making the discretisation
of this approach cumbersome. In this work we follow instead the concept introduced in \cite{DecHerHin21}, \cite{DecHerHin23} and suggest to work directly in the space $W^{1,\infty}_0(D,\mathbb R^d)$ choosing
\begin{equation}\label{eq:generalMinimisationForDirection}
    V^* \in \argmin \left\{  \mathcal{J}'(\Omega)[V] :  V\in  W^{1,\infty}_0(D,\mathbb R^d), | D V | \leq 1 \mbox{ a.e. in } D \right\}
\end{equation}
as descent direction for the shape minimisation problem.
In the above, by $|DV|$, we mean the spectral norm of the matrix $DV$.
In order to approximate (\ref{prob0}) based on this idea we introduce the functional
\begin{equation}\label{probh}
     \mathcal{J}_h(\Omega_h) := \int_{\Omega_h} j(\cdot,u_h,\nabla u_h) \dx ,\quad  \Omega_h \in \mathcal{S}_h,
\end{equation}
where $u_h$ denotes the piecewise linear and continuous finite element function solving the discrete Poisson problem  \eqref{discstate} and $\mathcal S_h$ is a suitable approximation of $\mathcal S$.  For the numerical solution of the  discrete shape optimisation problem we propose a steepest descent method with Armijo step size rule which is realised in the $W^{1,\infty}-$ topology as described above.
{In fact $\mathcal S_h$ is built upon piecewise linear and continuous approximations $\Phi_h$ of the mapping $\Phi$, which in turn are induced by piecewise linear and continuous vector fields $V_h^*$ solving the discrete counterpart of \eqref{eq:generalMinimisationForDirection}.
We here note that the use of piecewise linear and continuous finite elements is perfectly tailored to the numerical treatment of our approach, since they belong to $W^{1,\infty}$, and both problems \eqref{eq:generalMinimisationForDirection} and \eqref{probh} can be discretised on the same triangulation $\mathcal{T}_h$.}
It is the purpose of this paper to analyse the resulting numerical method both for a fixed mesh width $h$ and for the case that $h$ tends to zero thereby justifying the underlying approach.
The main contributions of this work are 
\begin{itemize}
    \item Theorem \ref{conv1}, where global convergence of the steepest descent method is shown for a fixed discretisation parameter, and under mild assumptions also, that every accumulation point of this sequence is a stationary point of the discrete shape functional;
    \item Theorem \ref{conv3}, where it is shown that under suitable conditions a sequence of discrete stationary shapes converges with respect to the Hausdorff complementary metric to a stationary point of (\ref{prob0}). 
\end{itemize}
An important ingredient in the proof of Theorem \ref{conv3} is the continuity of the Dirichlet problem with respect to the Hausdorff complementary metric which is usually expressed in terms of $\gamma$--convergence or (equivalently) Mosco--convergence. Our analysis
is inspired by the work \cite{CZ06} of Chenais and Zuazua, who obtain the convergence of a sequence of discrete minimal shapes, obtained by some finite element approximation, to a minimum of the continuous problem. In \cite{CZ06}, Mosco--convergence is a consequence of
the assumption that the complementary sets of the discrete optimal shapes have a uniformly bounded number of connected components. In contrast, in our setting it will be more convenient to work with a uniform capacity density condition, see Theorem \ref{compact0}.
A convergence result for a shape optimisation problem in the class of convex domains has recently been obtained by Bartels and Wachsmuth, \cite{BarWac20} under a condition that will  also appear in our work. \\
In special settings a priori estimates for finite element approximations of shape optimisation problems have been proved. Here we refer to the works of Kiniger and Vexler \cite{KV13} and  Fumagalli et al.~\cite{FPV15}, where graph settings are
considered, and of Eppler et al.~\cite{EHS07} for star-shaped domains. \\
Another aspect that has been examined from the viewpoint of numerical analysis is the approximation of the shape derivative. In \cite{HPS15} Hiptmair, Paganini, and Sargheini study the finite element approximation of the shape derivative under appropriate regularity assumptions of the state and the adjoint state.
In \cite{gong2021discrete} Gong and Zhu propose a finite element approximation to the boundary form of the shape derivative in PDE constrained shape optimisation.
Zhu and Gao in \cite{zhu2019convergence} numerically analyse a mixed finite element approximation of the shape gradient for Stokes flow, and Zhu,  Hu and Liao in \cite{zhu2020convergence} provide numerical analysis for the finite element approximation of shape derivatives in eigenvalue optimisation for the Poisson problem. For additional information on the subject of shape optimisation we refer the reader to the seminal works of Delfour and Zol\'esio \cite{DelZol11},  of Sokolowski and Zol\'esio \cite{SZ92}, and the recent overview article \cite{ADJ21} by Allaire, Dapogny, and Jouve, where also a comprehensive  bibliography on the topic can be found.

\paragraph{Outline:} In Section \ref{sec:Preliminaries} we provide preliminaries for the formulation and the numerical analysis of our PDE constrained shape optimisation problem.
In Section \ref{sec:convergenceOfAlgorithm} we prove global convergence for the steepest descent method applied to problem \eqref{probh}, and in Section \ref{sec:ConvergenceOfStationaryPoints} prove convergence of discrete stationary points to a stationary point for the limit problem \eqref{prob0}.
In Section \ref{sec:experiments} we provide numerical experiments which support our theoretical findings.

\section{Preliminaries}\label{sec:Preliminaries}
\subsection{Setting of the problem}

\noindent
Let $D \subset \mathbb R^d$ be an open, convex, polygonal hold-all domain and  $\Omref \Subset D$  a fixed reference domain. We define
\begin{displaymath}
\mathcal U:= \lbrace \Phi: \bar D \rightarrow \bar D \, | \, \Phi \mbox{ is a bilipschitz map}, \Phi=\mbox{id} \mbox{ on } \partial D \rbrace
\end{displaymath}
and our set of admissible shapes as
\begin{displaymath}
\mathcal S:= \lbrace \Omega \subset D \, | \, \Omega= \Phi(\Omref) \mbox{ for some } \Phi \in \mathcal U \rbrace.
\end{displaymath}
Let us  consider the shape optimisation problem
\begin{displaymath}
\min_{\Omega \in \mathcal S} \mathcal J(\Omega) = \int_{\Omega} j(x,u(x),\nabla u(x)) \dx,
\end{displaymath}
where $u \in H^1_0(\Omega)$ is the unique solution of 
\begin{equation} \label{state}
\int_{\Omega} \nabla u \cdot \nabla \eta \dx = \langle f,\eta \rangle \qquad \mbox{ for all } \eta \in H^1_0(\Omega).
\end{equation}
Our definition of $\mathcal S$ allows us to interpret $\mathcal U$ as the set of controls for a PDE--constrained optimisation problem. 
In what follows we assume that $f \in H^1(D)$ and that $j \in C^2(D \times \mathbb R \times \mathbb R^d)$ satisfies 
\begin{eqnarray}
| j(x,u,z) | + | j_x(x,u,z)| + | j_{xx}(x,u,z)|  & \leq  & \varphi_1(x)+ c_1 \bigl( | u|^q + |z|^2 \bigr); \label{jest} \\
| j_u(x,u,z) | + | j_{xu}(x,u,z) | & \leq  &  \varphi_2(x) + c_2 \bigl(  |u |^{q-1} + | z|^{2-\frac{2}{q}} \bigr);   \label{j1est} \\
| j_z(x,u,z) | + | j_{xz}(x,u,z) |  & \leq   & \varphi_3(x) + c_3 \bigl( |u|^{\frac{q}{2}} + |z| \bigr);  \label{j2est}   \\
| j_{uu}(x,u,z) | & \leq  &\varphi_4(x) + c_4 \bigl( |u|^{q-2} + |z|^{2- \frac{4}{q}} \bigr); \label{j3est} \\
  | j_{zz}(x,u,z) | & \leq & \varphi_5(x) \label{j4est}
\end{eqnarray}
for all $(x,u,z) \in  D \times  \mathbb R \times \mathbb R^d$. Here, $2 \leq q < \infty$ if $d=2$ and $q= \frac{2d}{d-2}$ if $d \geq 3$. Also, $\varphi_1,\ldots,\varphi_5$ are non-negative with 
$\varphi_1 \in L^1(D), \varphi_2 \in L^{\frac{q}{q-1}}(D), \varphi_3 \in L^2(D), \varphi_4 \in L^{\frac{q}{q-2}}(D)$ and 
$\varphi_5 \in L^\infty(D)$.
{Note that the choice of $q$ implies the continuous embedding $H^1_0(D) \hookrightarrow L^q(D)$, so that there exists $c>0$ with
\begin{equation} \label{Dembed}
\Vert v \Vert_{L^q} \leq c \Vert v \Vert_{H^1} \qquad \mbox{ for all } v \in H^1_0(D).
\end{equation} } 
\noindent
It is well known that the shape derivative of $\mathcal J$ is given by
\begin{eqnarray}
 \mathcal J'(\Omega)[V] 
 & = &  \int_{\Omega} \Bigl( j(\cdot,u,\nabla u) \Div V + j_x(\cdot,u,\nabla u)\cdot  V  - j_z(\cdot,u,\nabla u) \cdot DV^{\mathsf{T}} \nabla u \Bigr) \dx    \label{firstvar} \\
 & & + \int_{\Omega} \Bigl(  \bigl( DV + D V^{\mathsf{T}} - \Div V I \bigr)  \nabla u \cdot \nabla p
+ {\Div}( f V) p   \Bigr) \dx \nonumber
\end{eqnarray}
for all $V \in W^{1,\infty}_0(D, \mathbb R^d)$. Here,
$p \in H^1_0(\Omega)$ is the solution of the adjoint problem
\begin{equation} \label{adj}
\int_{\Omega} \nabla p \cdot \nabla \eta \dx = \int_{\Omega} \bigl( j_u(\cdot,u,\nabla u) \eta + j_z(\cdot,u,\nabla u) \cdot \nabla \eta \bigr) \dx \quad \mbox{ for all } \eta \in H^1_0(\Omega).
\end{equation}
{We observe that \eqref{jest}--\eqref{j2est} together with \eqref{Dembed} imply that the integrals on the right hand side of \eqref{firstvar} and \eqref{adj} exist.}
Finding a global minimiser of $\mathcal J$ is usually a very hard task so that numerical methods aim to approximate stationary points, i.e. sets $\Omega \in \mathcal S$ that
satisfy $\mathcal J'(\Omega)[V]=0$ for all $V \in W^{1,\infty}_0(D, \mathbb R^d)$. 

\subsection{Discretisation}
In order to define a corresponding numerical method we choose an admissible triangulation  $\mathcal { \hat T}_h$ of $\bar D$ and define
\begin{displaymath}
\hatXh := \lbrace \Phi_h \in C^0(\bar D,\mathbb R^d) \, | \, \Phi_{h | \hat T} \in P^1(\hat T, \mathbb R^d), \hat T \in \mathcal{ \hat T}_h, \Phi_h \mbox{ is injective},
\Phi_h= \mbox{id} \mbox{ on } \partial D \rbrace.
\end{displaymath}

\noindent
We start with the following observation.
\begin{lemma} \label{homeo}  Let $\Phi_h \in \hatXh$. Then $\Phi_h$ is a bilipschitz map from $\bar D$ onto $\bar D$.
\end{lemma}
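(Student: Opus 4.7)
The plan is to prove in turn that (i) $\Phi_h(\bar D) = \bar D$, (ii) $\Phi_h$ is Lipschitz, and (iii) $\Phi_h^{-1}$ exists on $\bar D$ and is Lipschitz. For step (i) I would argue topologically. Since $\Phi_h$ is continuous and injective on the compact set $\bar D$, the image $\Phi_h(\bar D)$ is compact; Brouwer's invariance of domain theorem then ensures that $\Phi_h(D)$ is open in $\mathbb{R}^d$. Because $\Phi_h = \mathrm{id}$ on $\partial D$ and $\Phi_h$ is globally injective, $\Phi_h(D)$ is disjoint from $\partial D$, hence contained in one of the two connected components of $\mathbb{R}^d \setminus \partial D$; being bounded and connected it must lie in $D$. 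Using $\Phi_h(\bar D) = \Phi_h(D) \cup \partial D$, the set $D \setminus \Phi_h(D) = \bar D \setminus \Phi_h(\bar D)$ is open in $D$ as the complement of a compact set, and connectedness of $D$ forces it to be empty. This yields $\Phi_h(D) = D$ and so $\Phi_h(\bar D) = \bar D$.

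For step (ii), set $L := \max_{\hat T \in \mathcal{ \hat T}_h} \|\der \Phi_h|_{\hat T}\|$ (spectral norm), which is finite since the triangulation is finite. For any $x, y \in \bar D$, convexity of $D$ places $[x,y] \subset \bar D$; this segment splits into finitely many sub-segments each contained in a single simplex, so telescoping the piecewise affine estimates gives $|\Phi_h(x) - \Phi_h(y)| \leq L|x-y|$. For step (iii), injectivity of $\Phi_h$ on each $d$-simplex $\hat T$ forces the linear part of the affine map $\Phi_h|_{\hat T}$ to be invertible (otherwise the image of $\hat T$ lies in a proper affine subspace of $\mathbb{R}^d$, contradicting injectivity on any open subset of $\hat T$), so $\Phi_h|_{\hat T}$ has an affine inverse on the image simplex. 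Combined with global injectivity and step (i), the sets $\{\Phi_h(\hat T)\}_{\hat T \in \mathcal{ \hat T}_h}$ cover $\bar D$ with pairwise intersections only along images of common faces, so $\Phi_h^{-1}$ assembles into a well-defined continuous piecewise affine map on the convex set $\bar D$. Applying the segment argument once more with $L' := \max_{\hat T} \|(\der \Phi_h|_{\hat T})^{-1}\|$ yields the Lipschitz bound on $\Phi_h^{-1}$.

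The main obstacle is step (i): a priori $\Phi_h$ only maps into $\mathbb{R}^d$, and purely local or algebraic information cannot rule out that $\Phi_h(D)$ misses part of $D$ or escapes through $\partial D$. The global topological input provided by invariance of domain, together with the connectedness of $D$, is essentially what rules this out. Steps (ii) and (iii) are then little more than linear algebra on each simplex combined with the segment trick enabled by convexity of the hold-all $D$.
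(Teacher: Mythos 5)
Your overall architecture is sound, and it differs from the paper's in the surjectivity step: the paper computes the Brouwer degree, $\mbox{deg}(\Phi_h,D,p)=\mbox{deg}(\mbox{id},D,p)=1$ for $p\in D$, to obtain $D\subset\Phi_h(D)$ directly, and then upgrades to equality by a clopen--connectedness argument; you instead invoke invariance of domain to make $\Phi_h(D)$ open and run the connectedness argument inside $D$. Your steps (ii) and (iii) -- the segment-splitting/telescoping estimate enabled by convexity of $\bar D$, and the invertibility of each affine piece forced by injectivity -- are precisely the content the paper dismisses with ``it is not difficult to show'', and they are correct.

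There is, however, one genuine gap in step (i): the inference ``$\Phi_h(D)$ is disjoint from $\partial D$, bounded and connected, hence lies in $D$'' is a non sequitur. A small ball placed outside $\bar D$ is bounded, connected and disjoint from $\partial D$, yet lies in the unbounded component $E=\mathbb{R}^d\setminus\bar D$ of $\mathbb{R}^d\setminus\partial D$; boundedness alone does not select the bounded component. Some additional topological input is required. One option is the paper's degree computation. Another, staying entirely within your framework: since $\Phi_h(\bar D)$ is compact, $\overline{\Phi_h(D)}\subset\Phi_h(\bar D)=\Phi_h(D)\cup\partial D$, so $\Phi_h(D)$ is relatively closed (as well as open, by invariance of domain) in $\mathbb{R}^d\setminus\partial D$; were it contained in $E$, it would be a nonempty clopen subset of the connected set $E$, hence equal to $E$, contradicting boundedness. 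A third option is the no-retraction theorem: if $\Phi_h(D)\cap D=\emptyset$, composing $\Phi_h$ with the nearest-point projection onto the convex set $\bar D$ would produce a retraction of $\bar D$ onto $\partial D$, which is impossible. With any of these repairs your proof goes through.
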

\begin{proof} Denoting by $\mbox{deg}$ the Brouwer degree and using that $\Phi_h=\mbox{id}$ on $\partial D$, we have for every $p \in D$ that
\begin{displaymath}
\mbox{deg}(\Phi_h,D,p) = \mbox{deg}(\mbox{id},D,p)=1.
\end{displaymath}
Hence we deduce from the existence property of the degree that there exists $x \in D$ with $p=\Phi_h(x)$, and therefore $D \subset \Phi_h(D)$. Next we claim that $D$ is closed in $\Phi_h(D)$. To see this,
let $(p_n)_{n \in \mathbb N}$ be a sequence in $D$ such that $p_n \rightarrow p$ as $n \rightarrow \infty$ for some $p \in \Phi_h(D)$, say $p=\Phi_h(x)$ with
$x \in D$. If $p \in \partial D$, then $\Phi_h(x)=p=\Phi_h(p)$ and hence we obtain in view of  the injectivity of $\Phi_h$ that $x=p$,
a contradiction. Hence $p \in D$. As $D$ is also open in $\Phi_h(D)$ and $\Phi_h(D)$ is connected we infer that $D=\Phi_h(D)$.
Recalling again that $\Phi_h=\mbox{id}$ on $\partial D$ we see that $\Phi_h:\bar D \rightarrow \bar D$ is bijective. Finally, using the fact that $\Phi_h$ is piecewise linear and injective together with
the convexity of $D$ it is not difficult to show
that  there exists a constant $K >1$ depending on $\Phi_h$ such that
\begin{equation} \label{bilip}
\frac{1}{K} | x - y | \leq | \Phi_h(x) - \Phi_h(y) | \leq K | x - y | \qquad \forall x,y \in \bar D.
\end{equation}
\end{proof}

\noindent
Similarly as in \cite[Section 3.2]{BarWac20} we shall define our discrete admissible domains  via transformations of $\Omref$ from the set $\hatXh$. In what follows we
 assume that $\Omref$ is an open polygonal  domain such that
$\overline{\Omref} = \bigcup_{\hat T \in \mathcal{ \hat T}^{\tiny \mbox{ref}}_h} \hat T \subset D$, 
where  $\mathcal{ \hat T}^{\tiny \mbox{ref}}_h \subset \mathcal{ \hat T}_h$.
For later purposes we suppose in addition  that $\Omref$  satisfies the following exterior corkscrew condition:
\begin{equation} \label{corkscrew}
\exists \lambda \in (0,1) \, \exists s_0>0 \,  \forall \hat x \in \partial \Omref \,  \forall s \in (0,s_0) \, \exists \hat y \in B_s(\hat x): \quad B_{\lambda s}(\hat y) \subset B_s(\hat x) \cap \complement \Omref.
\end{equation}
\noindent
In the above $\complement \Omref$ denotes the complement of $\Omref$. 
We then define
\begin{equation} \label{admdisc}
\mathcal S_h:= \lbrace \Omega_h \subset D \, | \, \Omega_h= \Phi_h(\Omref) \mbox{ for some } \Phi_h  \in \hatXh \rbrace.
\end{equation}
Note that in view of Lemma \ref{homeo} sets $\Omega_h \in \mathcal S_h$ are triangulated 
in a natural way via $\mathcal T_{\Omega_h} = \lbrace \Phi_h(\hat T), \, \hat T \in \mathcal{ \hat T}^{\tiny \mbox{ref}}_h \rbrace$.
Given a triangulation of this form we introduce
\begin{displaymath}
X_{\Omega_h}:= \lbrace \eta_h \in C^0(\overline{\Omega_h}) \, | \, \eta_{h|T} \in P_1(T), T \in \mathcal T_{\Omega_h}, \,
\eta_h=0 \mbox{ on } \partial \Omega_h \rbrace. 
\end{displaymath}
Our discrete shape optimisation problem now reads:
\begin{equation} \label{discshapeopt}
\min \mathcal J_h(\Omega_h):= \int_{\Omega_h} j(x,u_h(x),\nabla u_h(x)) \dx,
\end{equation}
where $u_h \in X_{\Omega_h}$ is the unique solution of 
\begin{equation} \label{discstate}
 \qquad \int_{\Omega_h} \nabla u_h \cdot \nabla \eta_h \dx = \langle  f, \eta_h \rangle \qquad \mbox{ for all } \eta_h \in X_{\Omega_h}.
\end{equation}
We remark that we have chosen linear finite elements merely for convenience and that one may take any conforming finite element space in
order to approximate the solution of \eqref{state}. \\[2mm]
\noindent
Let us fix $\Omega_h=\Phi_h(\Omref) \in \mathcal S_h$ for some $\Phi_h \in \hatXh$.
In order to define a suitable perturbation of $\Omega_h$ we let
\begin{equation} \label{vhspace}
\mathcal V_{\Phi_h} := \lbrace V_h \in C^0(\bar D,\mathbb R^d) \, | \, V_{h|T} \in P_1(T,\mathbb R^d), T = \Phi_h(\hat T), \hat T \in \mathcal{ \hat T}_h, \, V_h=0 \mbox{ on }
\partial D \rbrace.
\end{equation}
Suppose that  $V_h \in \mathcal V_{\Phi_h}$ with $| D V_h | \leq 1$ in $\bar D$. Clearly, $\Phi_h + t V_h \circ \Phi_h$ belongs to $\hatXh$ provided that $| t | <1$.
Hence $\Omega_{h,t}:=(\mbox{id}+ t V_h)(\Omega_h) = (\Phi_h + t V_h \circ \Phi_h)(\Omref) \in \mathcal S_h$ 
if  $|t|<1$ and  we may define $\displaystyle \mathcal J_h'(\Omega_h)[V_h]:= \frac{d}{dt} \mathcal J_h(\Omega_{h,t})_{|t=0}$. The formula for $\mathcal J'(\Omega_h)[V_h]$ is obtained
analogously to the continuous case. As the corresponding arguments will appear in the proof of Lemma \ref{decrease} below we here  merely state its form:
\begin{eqnarray}
 \mathcal J_h'(\Omega_h)[V_h]  
& = & \int_{\Omega_h} \Bigl( j(\cdot,u_h,\nabla u_h) {\Div} V_h +  j_x(\cdot,u_h,\nabla u_h) \cdot V_h - j_z(\cdot,u_h,\nabla u_h) \cdot DV_h^{\mathsf{T}} \nabla u_h \Bigr) \dx  \nonumber  \\
& & + \int_{\Omega_h} \Bigl( 
\bigl( DV_h + D V_h^{\mathsf{T}} - {\Div} V_h I \bigr)  \nabla u_h \cdot \nabla p_h
+ {\Div}( f V_h) p_h   \Bigr) \dx, \label{discsd}
\end{eqnarray}
where $p_h \in X_{\Omega_h}$ solves
\begin{equation} \label{discadj}
\int_{\Omega_h} \nabla p_h \cdot \nabla \eta_h \dx = \int_{\Omega_h} \bigl( j_u(\cdot,u_h,\nabla u_h) \eta_h + j_z(\cdot,u_h,\nabla u_h) \cdot \nabla \eta_h \bigr)  \dx \quad \mbox{ for all } \eta_h \in X_{\Omega_h}.
\end{equation}

\subsection{Descent algorithm} 

\noindent
With the notation introduced in the previous section we may now formulate a steepest descent method with Armijo search:
\begin{alg}[Steepest descent]\label{alg:Steepest}
\ \\ [2mm]
0. Let $\Omega^0_h:= \Omref, \Phi^0_h= \id$. \\[2mm]
For k=0,1,2,\ldots: \\[2mm]
1. If $\mathcal J_h'(\Omega^k_h)=0$, then stop. \\[2mm]
2. Choose $V^k_h \in \mathcal V_{\Phi^k_h}$ such that 
\begin{displaymath}
V^k_h = \argmin \lbrace J_h'(\Omega^k_h)[W_h] \, | \, W_h \in \mathcal V_{\Phi^k_h}, \, | DW_h | \leq 1 \mbox{ in } \bar D \rbrace.
\end{displaymath}
3. Choose the maximum $t_k \in \lbrace  \frac{1}{2}, \frac{1}{4}, \ldots \rbrace$ such that 
\begin{displaymath}
\mathcal J_h \bigl( (\id + t_k V^k_h)(\Omega^k_h) \bigr) - \mathcal J_h(\Omega^k_h) \leq \gamma t_k \mathcal J_h'(\Omega^k_h)[V^k_h].
\end{displaymath}
4. Set $\Phi_h^{k+1}:= (\mbox{id} + t_k V_h^k )\circ \Phi_h^k, \, \Omega^{k+1}_h:= (\mbox{id}+ t_k V^k_h)(\Omega^k_h)$.
\end{alg}
Here, $\gamma \in (0,1)$ is a fixed constant. In view of the  remarks after \eqref{vhspace} the algorithm produces  a sequence $(\Phi_h^k)_{k \in \mathbb N_0} \subset
\hatXh$  such that $\Omega_h^k= \Phi_h^k(\Omref) \in \mathcal S_h, k \in \mathbb N_0$. Our aim is to show that
\begin{displaymath}
\Vert \mathcal J_h'(\Omega^k_h) \Vert: = \sup \lbrace \mathcal J_h'(\Omega^k_h)[W_h] \, | \, W_h \in \mathcal V_{\Phi^k_h}, | DW_h | \leq 1 \mbox{ in } \bar D \rbrace \rightarrow 0, \quad \mbox{ as }
k \rightarrow \infty.
\end{displaymath}

\section{Convergence of the descent algorithm}\label{sec:convergenceOfAlgorithm}
\noindent
{In the present section we investigate the global convergence of the descent Algorithm \eqref{alg:Steepest}, where the discretisation parameter $h$ is kept fixed.} As a first step we note the following a--priori bounds on the discrete state and its adjoint state.
 
 \begin{lemma}  Let $\Omega_h=\Phi_h(\Omref) \in \mathcal S_h$ and $u_h,p_h \in X_{\Omega_h}$ the solutions of \eqref{discstate}, \eqref{discadj}
respectively. Then
\begin{equation} \label{discapriori}
\Vert u_h \Vert_{H^1}  \leq c  \Vert f \Vert_{L^2}, \quad \Vert p_h \Vert_{H^1} \leq c \bigl( 1 + \Vert f \Vert_{L^2}^{q-1} \bigr),
\end{equation}
where the constant $c$ only depends on $d,j$ and $D$. Here we think of $u_h$ and $p_h$ as being extended by zero to $D$.
\end{lemma}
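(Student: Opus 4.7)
The plan is to handle $u_h$ and $p_h$ separately by the standard ``test with yourself'' energy argument, using the fact that after extension by zero both functions lie in $H^1_0(D)$ (since they vanish on $\partial\Omega_h \supset \partial D \cap \overline{\Omega_h}$ and $\Omega_h \subset D$), so that the Poincar\'e inequality on $D$ and the Sobolev embedding \eqref{Dembed} are available with constants depending only on $D$ (and $d$).

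\emph{Bound on $u_h$.} First I would insert $\eta_h = u_h$ in \eqref{discstate} and estimate the right-hand side via
\begin{equation*}
\|\nabla u_h\|_{L^2(D)}^2 = \langle f, u_h \rangle \le \|f\|_{L^2}\|u_h\|_{L^2(D)} \le c\,\|f\|_{L^2}\|\nabla u_h\|_{L^2(D)},
\end{equation*}
where the last step uses Poincar\'e's inequality on $D$ for the zero-extended $u_h \in H^1_0(D)$. Dividing yields $\|\nabla u_h\|_{L^2} \le c\|f\|_{L^2}$, and a second application of Poincar\'e gives the full $H^1$ bound on $u_h$.

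\emph{Bound on $p_h$.} Next I would test \eqref{discadj} with $\eta_h = p_h$ to get
\begin{equation*}
\|\nabla p_h\|_{L^2(D)}^2 = \int_{\Omega_h} j_u(\cdot,u_h,\nabla u_h) p_h \dx + \int_{\Omega_h} j_z(\cdot,u_h,\nabla u_h)\cdot \nabla p_h \dx.
\end{equation*}
For the first integral I use H\"older with the dual pair $(q/(q-1), q)$, the growth bound \eqref{j1est}, and the fact that $(2-\tfrac{2}{q})\cdot\tfrac{q}{q-1} = 2$, to obtain
\begin{equation*}
\bigl\| j_u(\cdot,u_h,\nabla u_h) \bigr\|_{L^{q/(q-1)}(D)} \le \|\varphi_2\|_{L^{q/(q-1)}} + c\bigl(\|u_h\|_{L^q}^{q-1} + \|\nabla u_h\|_{L^2}^{2-2/q}\bigr),
\end{equation*}
followed by $\|p_h\|_{L^q} \le c\|p_h\|_{H^1}$ from \eqref{Dembed}. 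For the second integral I use Cauchy--Schwarz together with \eqref{j2est} and the hypothesis $\varphi_3 \in L^2(D)$, giving
\begin{equation*}
\bigl\| j_z(\cdot,u_h,\nabla u_h) \bigr\|_{L^2(D)} \le \|\varphi_3\|_{L^2} + c\bigl(\|u_h\|_{L^q}^{q/2} + \|\nabla u_h\|_{L^2}\bigr).
\end{equation*}

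\emph{Combining and absorbing.} Substituting the bound on $u_h$ already obtained, together with \eqref{Dembed}, yields $\|u_h\|_{L^q} + \|\nabla u_h\|_{L^2} \le c\|f\|_{L^2}$, so that after collecting terms the right-hand side is bounded by $c\bigl(1 + \|f\|_{L^2}^{q-1}\bigr)\|\nabla p_h\|_{L^2}$ (the exponent $q-1$ dominates $q/2$ and $2-2/q$ since $q\ge 2$). Dividing by $\|\nabla p_h\|_{L^2}$ and using Poincar\'e once more delivers the claimed estimate for $p_h$. The only nontrivial step is matching the growth exponents in \eqref{j1est}--\eqref{j2est} to the integrability assumptions on $\varphi_2, \varphi_3$ via the Sobolev exponent $q$; once the H\"older exponents are chosen correctly the rest is routine.
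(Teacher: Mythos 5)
Your proposal is correct and follows essentially the same route as the paper: the $u_h$ bound is the standard energy estimate, and the $p_h$ bound is obtained by testing \eqref{discadj} with $p_h$, applying H\"older with the dual pair $(q/(q-1),q)$ to the $j_u$ term and Cauchy--Schwarz to the $j_z$ term, invoking the growth conditions \eqref{j1est}--\eqref{j2est}, the embedding \eqref{Dembed} and Poincar\'e's inequality on $D$, and absorbing the lower-order exponents into $c(1+\Vert f \Vert_{L^2}^{q-1})$. The exponent bookkeeping (e.g.\ $(2-\tfrac{2}{q})\cdot\tfrac{q}{q-1}=2$ and $q-1$ dominating $q/2$ and $2-2/q$ for $q\ge 2$) is exactly what the paper's computation relies on.
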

\begin{proof} The first estimate is standard. In order to prove the bound on $p_h$ we test \eqref{discadj} with $\eta_h=p_h \in X_{\Omega_h}$ and use \eqref{j1est}, \eqref{j2est},
H\"older's inequality and  {\eqref{Dembed}} to obtain
\begin{eqnarray*}
\lefteqn{  \int_D | \nabla p_h |^2 \dx = 
\int_{\Omega_h} | \nabla p_h |^2 \dx    } \\
 & \leq & \int_{\Omega_h} \left[ \bigl( \varphi_2 + c_2( | u_h |^{q-1} + | \nabla u_h |^{\frac{2(q-1)}{q}} ) \bigr) | p_h | + \bigl( \varphi_3 + c_3( | u_h|^{\frac{q}{2}} + | \nabla u_h | ) \bigr)  | \nabla p_h | 
 \right]\dx \\
& \leq & \bigl( \Vert \varphi_2 \Vert_{L^{\frac{q}{q-1}}} + c_2(\Vert u_h \Vert_{L^q}^{q-1} + \Vert \nabla u_h \Vert_{L^2}^{\frac{2(q-1)}{q}} ) \bigr)  \Vert p_h \Vert_{L^q}   \\
& & + \bigl( \Vert \varphi_3 \Vert_{L^2} + c_3( \Vert u_h \Vert_{L^q}^{\frac{q}{2}} + \Vert \nabla u_h \Vert_{L^2})  \bigr) \Vert  \nabla p_h \Vert_{L^2} \\[2mm]
& \leq & c \bigl( 1+ \Vert u_h \Vert_{H^1}^{q-1}  \bigr)   \Vert p_h \Vert_{H^1} \leq c \bigl( 1 + \Vert f \Vert_{L^2}^{q-1} \bigr) \Vert p_h \Vert_{H^1} \leq c \bigl( 1 + \Vert f \Vert_{L^q}^{q-1} \bigr) \Vert \nabla p_h \Vert_{L^2},
\end{eqnarray*}
{where we also made use of Poincar\'e's inequality for $D$ and the bound on $u_h$.} The estimate for 
$\Vert p_h \Vert_{L^2}$ now follows from another application  of Poincar\'e's inequality.
\end{proof}
\noindent

\noindent
In order to establish the convergence of $\mathcal J_h'(\Omega^k_h)$ we follow  the general procedure outlined in Section 2.2.1 of  \cite{HinPinUlb08}. The following result can be
seen as an analogue of Lemma 2.2 in \cite{HinPinUlb08}, where the uniform continuity of the derivative of the objective functional that is assumed in that result needs to be replaced
by suitable arguments. 

\begin{lemma} \label{decrease}
 Let $\Omega_h=\Phi_h(\Omref) \in \mathcal S_h, \, \mathcal V_{\Phi_h}$ as in \eqref{vhspace} and  $V_h \in \mathcal V_{\Phi_h}$ such that 
\begin{displaymath}
V_h = \argmin \lbrace J_h'(\Omega_h)[W_h] \, | \, W_h \in \mathcal V_{\Phi_h,} \, | DW_h | \leq 1 \mbox{ in } \bar D \rbrace.
\end{displaymath}
Suppose that $\mathcal J_h'(\Omega_h)[V_h] \leq - \epsilon$ for some $\epsilon>0$. Then there exists 
$0< \delta <1$ which only depends on $j, f,D, d, \gamma$ and $\epsilon$ such that
\begin{displaymath}
\mathcal J_h(\Omega_{h,t}) - \mathcal J_h(\Omega_h) \leq \gamma t \mathcal J_h'(\Omega_h)[V_h] \qquad \mbox{ for all } 0 \leq t \leq \delta,
\end{displaymath}
where $\Omega_{h,t} = T_t(\Omega_h)$ and $T_t=\mbox{id}+ t V_h$.
\end{lemma}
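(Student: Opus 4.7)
The strategy is to show that $F(t) := \mathcal{J}_h(\Omega_{h,t})$ belongs to $C^{1,1}([0,\tfrac12])$ with a second-derivative bound independent of $\Omega_h$ and $V_h$ (provided $|DV_h|\leq 1$), and then to invoke the classical Armijo argument. To this end I would first pull everything back to the fixed domain $\Omega_h$ via $y = T_t(x)$, obtaining
\[
F(t) = \int_{\Omega_h} j\bigl(T_t(x),\hat u_{h,t}(x),DT_t(x)^{-\top}\nabla \hat u_{h,t}(x)\bigr)\,|\det DT_t(x)|\,dx,
\]
where $\hat u_{h,t}(x) := u_{h,t}(T_t(x))$. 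Since $V_h$ is piecewise affine on $\mathcal T_{\Omega_h}$, the map $T_t$ is affine on every element, so $\hat u_{h,t}\in X_{\Omega_h}$ and satisfies the pulled-back discrete state equation
\[
\int_{\Omega_h} A_t \nabla \hat u_{h,t}\cdot\nabla\hat\eta_h\,dx = \int_{\Omega_h}(f\circ T_t)\,\hat\eta_h\,|\det DT_t|\,dx \qquad \forall\hat\eta_h\in X_{\Omega_h},
\]
with $A_t := DT_t^{-1}DT_t^{-\top}|\det DT_t|$. Because $|DV_h|\leq 1$, for $t\in[0,\tfrac12]$ the eigenvalues of $DT_t$ lie in $[\tfrac12,\tfrac32]$, so $A_t$ is uniformly elliptic on $\Omega_h$; moreover the maps $t\mapsto A_t$ and $t\mapsto (f\circ T_t)|\det DT_t|$ are smooth in $t$ with $L^\infty$ and $L^2$ bounds depending only on $d$ and $\Vert f\Vert_{H^1(D)}$ (here I use $f\in H^1(D)$ to handle $f\circ T_t$ and its $t$-derivative).

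Next I would establish uniform $H^1$-bounds for $\hat u_{h,t}$ and for the difference quotients $\sigma\mapsto(\hat u_{h,t+\sigma}-\hat u_{h,t})/\sigma$: the former from coercivity together with \eqref{discapriori}, the latter by subtracting the equations at times $t+\sigma$ and $t$ and testing with the difference quotient itself. Analogous arguments applied to the pulled-back adjoint equation \eqref{discadj} deliver the same uniform bounds for $\hat p_{h,t}$. Differentiating the integral expression for $F(t)$, using the growth conditions \eqref{jest}--\eqref{j4est} on $j$ and its derivatives together with H\"older's inequality and the embedding \eqref{Dembed}, I would then show that $F\in C^{1,1}([0,\tfrac12])$ with $|F''(t)|\leq M$ for a constant $M=M(j,f,D,d)$; in particular $F'(0) = \mathcal J_h'(\Omega_h)[V_h] \leq -\epsilon$ by assumption.

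Given this, the conclusion follows from
\[
F(t)-F(0)-\gamma t F'(0) = (1-\gamma)\,t F'(0) + \int_0^t \bigl(F'(s)-F'(0)\bigr)\,ds \leq -(1-\gamma)\,t\epsilon + \tfrac{M}{2}\,t^2,
\]
which is non-positive as soon as $t\leq \delta := \min\bigl\{\tfrac12,\,2(1-\gamma)\epsilon/M\bigr\}$, giving $\delta$ with the claimed dependence. The principal obstacle is obtaining the uniform Lipschitz dependence of $\hat u_{h,t}$, $\hat p_{h,t}$ on $t$ with a constant that is independent of $h$ and of $\Omega_h\in\mathcal S_h$; the pull-back to the fixed domain $\Omega_h$ is the device that makes this possible, since the finite element space $X_{\Omega_h}$ is frozen in $t$ while all $t$-dependence is moved into coefficients that are controlled by the uniform $W^{1,\infty}$ bound on $V_h$.
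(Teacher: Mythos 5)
Your overall architecture — pull back to the fixed domain $\Omega_h$ so that the finite element space is frozen and all $t$-dependence sits in the coefficients $A_t=DT_t^{-1}DT_t^{-\mathsf{T}}\det DT_t$ and $(f\circ T_t)\det DT_t$, then exploit uniform ellipticity and the a priori bounds — is exactly the paper's. The Armijo endgame is also standard in both. However, there is one genuine gap: your claim that $F\in C^{1,1}([0,\tfrac12])$ with $|F''(t)|\le M(j,f,D,d)$ is not available under the paper's hypothesis $f\in H^1(D)$. A bound on $F''$ requires differentiating the state equation twice in $t$, and the second $t$-derivative of the right-hand side $(f\circ T_t)\det DT_t$ involves $(D^2f\circ T_t)[V_h,V_h]$, i.e.\ it needs $f\in H^2$. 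With $f\in H^1$ the map $t\mapsto (f\circ T_t)\det DT_t$ is $C^1$ into $L^2$ but not Lipschitz-differentiable (your own phrase ``smooth in $t$'' is where this slips in), so $F'$ is continuous but in general not Lipschitz, and the quadratic remainder $\tfrac{M}{2}t^2$ in your final display is not justified.

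The paper's proof is precisely an acknowledgement of this: after estimating all terms it arrives at a remainder of the form
\begin{displaymath}
\mathcal J_h(\Omega_{h,t}) - \mathcal J_h(\Omega_h)  \leq    t \,\mathcal J_h'(\Omega_h)[V_h]  + ct \Bigl( t  +  \sup_{0 \leq \sigma \leq t} \Vert \nabla f \circ T_\sigma - \nabla f \Vert_{L^2} \Bigr),
\end{displaymath}
which is $o(t)$ but not $O(t^2)$. The key point, which your proposal must replace the $C^{1,1}$ claim with, is that this modulus is \emph{uniform over all admissible} $V_h$: since $|DV_h|\le 1$ and $V_h=0$ on $\partial D$ one has $|V_h|\le \operatorname{diam}(D)$, hence $|T_\sigma(x)-x|\le \operatorname{diam}(D)\,t$, and approximating $\nabla f$ in $L^2$ by continuous functions gives $\sup_{0\le\sigma\le t}\Vert\nabla f\circ T_\sigma-\nabla f\Vert_{L^2}\le \tfrac{1}{2c}(1-\gamma)\epsilon$ for all $t\le\delta$ with $\delta$ depending on $f$ and $\epsilon$ but not on $\Omega_h$ or $V_h$. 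With the remainder written as $ct^2+ct\,\omega(t)$, $\omega(t)\to0$ uniformly, your Armijo computation goes through verbatim and yields the claimed $\delta=\delta(j,f,D,d,\gamma,\epsilon)$. Everything else in your outline (uniform $H^1$ bounds on $\hat u_{h,t}$, the difference-quotient estimate $\Vert\hat u_{h,t}-u_h\Vert_{H^1}\le ct$, the use of the adjoint to absorb the first-order variation of the state, and the handling of the second-order terms of $j$ via the growth conditions) matches the paper and is sound.
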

\begin{proof} We follow the standard procedure for calculating the shape derivative with special attention on controlling the remainder terms. Recalling the definition of $\mathcal J_h$ we have
\begin{displaymath}
\mathcal J_h(\Omega_{h,t}) = \int_{\Omega_{h,t}} j(\cdot,u_{h,t},\nabla u_{h,t}) \dx,
\end{displaymath}
where $u_{h,t} \in X_{\Omega_{h,t}}$ solves
\begin{displaymath}  
\int_{\Omega_{h,t}} \nabla u_{h,t} \cdot \nabla \eta_{h,t} \dx = \int_{\Omega_{h,t}} f \eta_{h,t} \dx \qquad \forall \eta_{h,t} \in X_{\Omega_{h,t}}.
\end{displaymath}
For $\eta_h \in X_{\Omega_h}$ we have that $\eta_{h,t}:= \eta_h \circ T_t^{-1} \in X_{\Omega_{h,t}}$ and hence
\begin{equation}  \label{hatuht}
\int_{\Omega_{h,t}} \nabla u_{h,t} \cdot \nabla (\eta_h \circ T_t^{-1}) \dx = \int_{\Omega_{h,t}} f \eta_h \circ T_t^{-1} \dx \qquad
\forall  \eta_h \in X_{\Omega_h},
\end{equation}
from which we infer with the help of the transformation rule
\begin{equation} \label{uhtrel}
\int_{\Omega_h} \nabla u_{h,t} \circ T_t \cdot \nabla (\eta_h \circ T_t^{-1}) \circ T_t \, | \mbox{det} D T_t | \dx =
\int_{\Omega_h} f \circ T_t \, \eta_h \, | \mbox{det} D T_t | \dx \qquad \forall  \eta_h \in X_{\Omega_h}.
\end{equation}
Since $| D V_h | \leq 1$ in $\bar D$ we have 
\begin{equation} \label{dif1}
\mbox{det} DT_t -1 = t {\Div} V_h + r_1, \quad \mbox{ with } | r_1 | \leq c t^2,
\end{equation}
where the constant $c$ only depends on $d$. In particular there is $\delta_1>0$ so that $\mbox{det} DT_t >0, 0 \leq t \leq \delta_1$. If
we define $\hat u_{h,t}:= u_{h,t} \circ T_t \in X_{\Omega_h}$ and $A_t:= (DT_t)^{-1} (D T_t)^{- \mathsf{T}} \mbox{det} DT_t$ the relation \eqref{uhtrel} can
 be written in the form
\begin{equation} \label{stateuht}
\int_{\Omega_h} A_t \nabla \hat u_{h,t} \cdot \nabla \eta_h \dx = \int_{\Omega_h} f \circ T_t \, \eta_h \, \mbox{det} D T_t \dx \qquad
\mbox{ for all } \eta_h \in X_{\Omega_h}.
\end{equation}
Thus we have
\begin{eqnarray}
\lefteqn{
\mathcal J_h(\Omega_{h,t}) - \mathcal J_h(\Omega_h) } \nonumber \\
& = & \int_{\Omega_h} \bigl( j(T_t,\hat u_{h,t}, D T_t^{- \mathsf{T}} \nabla \hat u_{h,t}) \, \mbox{det} DT_t - j(\cdot,u_h,\nabla u_h) \bigr) \dx   \nonumber \\
& = & \int_{\Omega_h} j(\cdot,u_h,\nabla u_h) (\mbox{det} DT_t -1 ) \dx + \int_{\Omega_h} \bigl( j(T_t,\hat u_{h,t},DT_t^{- \mathsf{T}} \nabla \hat u_{h,t}) - j(\cdot,u_h,\nabla u_h) \bigr) \dx    \nonumber  \\
& & +  \int_{\Omega_h} \bigl( j(T_t,\hat u_{h,t}, DT_t^{- \mathsf{T}} \nabla \hat u_{h,t}) -j(\cdot,u_h,\nabla u_h) \bigr) ( \mbox{det} DT_t - 1) \dx  \nonumber \\
& = &  \sum_{j=1}^3 T_j.  \label{difj}
\end{eqnarray}
We deduce with the help of \eqref{dif1},  \eqref{jest}, {\eqref{Dembed}} and \eqref{discapriori} that 
\begin{eqnarray}
\lefteqn{
T_1 = t \int_{\Omega_h} j(\cdot,u_h,\nabla u_h) \Div V_h \dx + \int_{\Omega_h} j(\cdot,u_h,\nabla u_h) \, r_1 \dx } \nonumber \\
& \leq &  t \int_{\Omega_h} j(\cdot,u_h,\nabla u_h) \Div V_h \dx + c t^2 \int_{\Omega_h} ( \varphi_1 + c_1 | u_h |^q + c_1 | \nabla u_h |^2) \dx
\nonumber  \\
& \leq & t \int_{\Omega_h} j(\cdot,u_h,\nabla u_h) \Div V_h \dx + c t^2 \bigl( 1+ \Vert u_h \Vert_{H^1}^q \bigr) \leq t \int_{\Omega_h} j(\cdot,u_h,\nabla u_h) \Div V_h \dx + c t^2.
 \label{t1}
\end{eqnarray}
In order to treat $T_2$ we use Taylor's formula and write
\begin{eqnarray*}
\lefteqn{
j(T_t,\hat u_{h,t}, D T_t^{- \mathsf{T}} \nabla \hat u_{h,t}) - j(\cdot,u_h,\nabla u_h) } \\[2mm]
& = &  t j_x() \cdot  V_h + j_u()  (\hat u_{h,t} - u_h) + j_z() \cdot (DT_t^{- \mathsf{T}} \nabla \hat u_{h,t} - \nabla u_h)  \\
& &   + \int_0^1 (1-s) \frac{d^2}{ds^2} \left[ j(\cdot +stV_h,s \hat u_{h,t} +(1-s) u_h,
s  DT_t^{- \mathsf{T}} \nabla \hat u_{h,t} +(1-s) \nabla u_h) \right] ds,
\end{eqnarray*} 
where the first order derivatives of $j$ are evaluated at $(\cdot,u_h,\nabla u_h)$. Thus we have
\begin{eqnarray}
T_2 & = & t  \int_{\Omega_h} j_x(\cdot,u_h,\nabla u_h) \cdot V_h \dx - t \int_{\Omega_h} j_z(\cdot,u_h,\nabla u_h) \cdot DV_h^{\mathsf{T}} \nabla u_h \nonumber \\
& & + \int_{\Omega_h} j_z(\cdot,u_h,\nabla u_h) \cdot \bigl( (DT_t^{-\mathsf{T}} -I + t DV_h^{\mathsf{T}} ) \nabla \hat u_{h,t} + t DV_h^{\mathsf{T}} \nabla (u_h - \hat u_{h,t}) \bigr) \dx \nonumber \\
& & +  \int_{\Omega_h} \bigl( j_u(\cdot,u_h,\nabla u_h)(\hat u_{h,t} -u_h)  + j_z(\cdot,u_h,\nabla u_h) \cdot \nabla (\hat u_{h,t} - u_h) \bigr) \dx \nonumber \\
& & + \int_{\Omega_h}  \int_0^1 (1-s) \frac{d^2}{ds^2} \left[ j(\cdot +stV_h,s \hat u_{h,t} +(1-s) u_h,
s  DT_t^{-\mathsf{T}} \nabla \hat u_{h,t} +(1-s) \nabla u_h) \right] ds \dx \nonumber \\
& = & \sum_{j=1}^5 T_{2,j}. \label{t2}
\end{eqnarray}
Let us begin with the term $T_{2,3}$. Observing that $DT_t^{-\mathsf{T}} = (I+ t D V_h)^{-\mathsf{T}}  = I-t DV_h^{\mathsf{T}} + R_2$ with $|R_2| \leq c t^2$ we deduce with the help
of H\"older's inequality, {\eqref{j2est}, \eqref{Dembed} and \eqref{discapriori}}
\begin{eqnarray}
T_{2,3} & \leq  &   \int_{\Omega_h} |j_z(\cdot,u_h,\nabla u_h) | \bigl( |R_2 | \, | \nabla \hat u_{h,t} | +  t \, | \nabla (\hat u_{h,t} - u_h) | \bigr)  \dx   \nonumber \\
& \leq & c \bigl( \Vert \varphi_3 \Vert_{L^2} + \Vert u_h \Vert_{L^q}^{\frac{q}{2}} + \Vert \nabla u_h \Vert_{L^2} \bigr) \bigl( t^2 \Vert \hat u_{h,t} \Vert_{H^1} + t \Vert \hat u_{h,t} - u_h \Vert_{H^1}
\bigr) \nonumber \\
& \leq &  c \bigl( 1+ \Vert u_h \Vert_{H^1}^{\frac{q}{2}} \bigr) \bigl(  t^2 + c \Vert \hat u_{h,t} - u_h \Vert_{H^1}^2 \bigr) \leq c \bigl(  t^2 + c \Vert \hat u_{h,t} - u_h \Vert_{H^1}^2 \bigr).   \label{t22}
\end{eqnarray}
Next, using  \eqref{discadj}, \eqref{discstate} and \eqref{stateuht} we obtain
\begin{eqnarray*}
T_{2,4} &=& \int_{\Omega_h} \nabla p_h \cdot \nabla ( \hat u_{h,t} - u_h ) \dx = \int_{\Omega_h} \nabla p_h \cdot \nabla \hat u_{h,t} \dx - \int_{\Omega_h} \nabla p_h \cdot \nabla u_h \dx  \\
& = & \int_{\Omega_h} (I- A_t) \nabla p_h \cdot \nabla u_h \dx + \int_{\Omega_h} (I - A_t) \nabla p_h \cdot \nabla ( \hat u_{h,t} - u_h) \dx \\
& & + \int_{\Omega_h}  \bigl( f \circ T_t \, \mbox{det} D T_t -f \bigr) p_h \dx 
 =  \sum_{k=1}^3  \tilde T_k.
\end{eqnarray*}
Recalling that $A_t = (DT_t)^{-1} (DT_t)^{-\mathsf{T}} \mbox{det} DT_t$ it is not difficult to see that
\begin{equation} \label{idmat}
I - A_t = t \bigl( D V_h + D V_h^{\mathsf{T}} - \Div V_h I  \bigr) + R_3, \qquad \mbox{ with } | R_3  | \leq c t^2,
\end{equation}
where $c$ only depends on $d$. Hence
\begin{eqnarray} 
\tilde T_1 & = & t \int_{\Omega_h} \bigl( D V_h + D V_h^{\mathsf{T}} - \Div V_h I  \bigr) \nabla u_h \cdot \nabla p_h \dx 
+ \int_{\Omega_h} R_3 \nabla u_h \cdot \nabla p_h \dx \nonumber \\
& \leq &  t \int_{\Omega_h} \bigl( D V_h + D V_h^{\mathsf{T}} - \Div V_h I  \bigr) \nabla u_h \cdot \nabla p_h \dx + c t^2   \label{t21}
\end{eqnarray}
in view of  \eqref{discapriori}. Next
\begin{equation} \label{t22Dash}
\tilde T_2 \leq c t \Vert \nabla p_h \Vert_{L^2} \Vert \nabla ( \hat u_{h,t} - u_h) \Vert_{L^2} \leq c t \Vert \nabla ( \hat u_{h,t} - u_h) \Vert_{L^2}
\end{equation}
again by \eqref{discapriori}.
In order to deal with $\tilde T_3$ we  write for $x \in \Omega_h$
\begin{displaymath}
 f(T_t(x)) =  f(x) +  t \int_0^1 \nabla f(x+s t V_h(x)) \cdot V_h(x) \ds,
 \end{displaymath}
which, combined with \eqref{dif1} yields
\begin{eqnarray}
f \circ T_t  \, \mbox{det} D T_t -f & = &  ( f \circ T_t -f ) + t f \circ T_t \, \Div V_h + r_1 f \circ T_t   \label{fdif} \\
&=& t \nabla f \cdot V_h + t f \Div V_h  + t \int_0^1 \bigl( \nabla f(\cdot+st V_h) - \nabla f \bigr) \cdot V_h \ds \nonumber \\
& &  + t^2 \int_0^1 \nabla f(\cdot +st V_h) \cdot V_h \ds \, \Div V_h + r_1 f \circ T_t.  \nonumber
\end{eqnarray}
This implies together with \eqref{dif1} and \eqref{discapriori} 
\begin{eqnarray} 
\tilde T_3  & \leq &      t \int_{\Omega_h} \Div( f V_h) p_h \dx  +  c t^2 \Vert p_h \Vert_{L^2}  \Vert f \Vert_{H^1} + c t \Vert p_h \Vert_{L^2}  
\sup_{0 \leq \sigma \leq t} \Vert \nabla f \circ T_\sigma - \nabla f \Vert_{L^2}  \nonumber \\
& \leq & t \int_{\Omega_h} \Div( f V_h) p_h \dx +  c t^2 + c t  \sup_{0 \leq \sigma \leq t} \Vert \nabla f \circ T_\sigma - \nabla f \Vert_{L^2}.  \label{t23}
\end{eqnarray}
Here we have also used that
\begin{equation} \label{Vhest}
| V_h(x)| \leq \mbox{diam}(D) \sup_{y \in D} | D V_h(y) | \leq \mbox{diam}(D), \; x \in D,
\end{equation}
since $V_h=0$ on $\partial D$ and $| DV_h | \leq 1$ in $D$.
Collecting the above terms we have
\begin{eqnarray}
T_{2,4} & \leq &  t \int_{\Omega_h} \bigl( D V_h + D V_h^{\mathsf{T}} - \Div V_h I  \bigr) \nabla u_h \cdot \nabla p_h \dx + 
t \int_{\Omega_h} \Div( f V_h) p_h \dx  \nonumber \\
& & +  c t^2 +c \Vert \hat u_{h,t} - u \Vert_{H^1}^2 + c t  \sup_{0 \leq \sigma \leq t} \Vert \nabla f \circ T_\sigma - \nabla f \Vert_{L^2}. \label{t24}
\end{eqnarray}
Finally, the term $T_{2,5}$ involves a sum of products of  second order partial derivatives of $j$ with $t V_h, \hat u_{h,t} - u_h$ and
$DT_t^{-\mathsf{T}} \nabla \hat u_{h,t}- \nabla u_h$. By way of example we use \eqref{j2est} to estimate for $0 \leq s \leq 1$
\begin{eqnarray*}
\lefteqn{
\int_{\Omega_h} | j_{xz}(\cdot +stV_h,s \hat u_{h,t} +(1-s) u_h,s DT_t^{-\mathsf{T}} \nabla \hat u_{h,t} +(1-s) \nabla u_h) | \, t \,  | V_h| \, | DT_t^{-\mathsf{T}} \nabla \hat u_{h,t} -  \nabla u_h | \dx } \\
& \leq & t \Vert V_h \Vert_{L^\infty} \int_{\Omega_h} \bigl( \varphi_3 \circ (\mbox{id} + st V_h) + | s \hat u_{h,t} +(1-s) u_h |^{\frac{q}{2}} \\
& &  \qquad \qquad \quad  \quad + | s DT_t^{-\mathsf{T}} \nabla \hat u_{h,t} +(1-s) \nabla u_h |
\bigr) | DT_t^{-\mathsf{T}} \nabla \hat u_{h,t} - \nabla u_h |  \dx  \\
& \leq &  c t  \bigl( \Vert \varphi_3 \Vert_{L^2}+ \Vert \hat u_{h,t} \Vert_{L^q}^{\frac{q}{2}} + \Vert u_h \Vert_{L^q}^{\frac{q}{2}} + \Vert \nabla \hat u_{h,t} \Vert_{L^2}
 + \Vert \nabla  u_{h} \Vert_{L^2} \bigr) \Vert DT_t^{-\mathsf{T}} \nabla \hat u_{h,t} - \nabla u_h \Vert_{L^2} \\
& \leq & c t  \bigl( 1+ \Vert \hat u_{h,t} \Vert_{H^1}^{\frac{q}{2}} + \Vert u_h \Vert_{H^1}^{\frac{q}{2}} \bigr) \bigl( \Vert DT_t^{-\mathsf{T}} - I \Vert_{L^\infty} \Vert \hat u_{h,t} \Vert_{H^1} +  \Vert \hat u_{h,t} - u_h \Vert_{H^1} \bigr) \\
& \leq & ct \bigl( t + \Vert \hat u_{h,t} - u_h \Vert_{H^1} \bigr)  \leq c t^2 + c \Vert \hat u_{h,t} - u_h \Vert_{H^1}^2.
\end{eqnarray*}
Arguing in a similar way for the other terms we obtain
\begin{equation} \label{t25}
T_{2,5}   \leq    c t^2 + c \Vert \hat u_{h,t} - u_h \Vert_{H^1}^2,
\end{equation}
so that in conclusion
\begin{eqnarray}
T_2 & \leq &  t  \int_{\Omega_h} j_x(\cdot,u_h,\nabla u_h) \cdot V_h \dx - t \int_{\Omega_h} j_z(\cdot,u_h,\nabla u_h) \cdot DV_h^{\mathsf{T}} \nabla u_h \nonumber \\
& & +  t \int_{\Omega_h} \bigl( D V_h + D V_h^{\mathsf{T}} - \Div V_h I  \bigr) \nabla u_h \cdot \nabla p_h \dx + 
t \int_{\Omega_h} \Div( f V_h) p_h \dx  \nonumber \\
& & +  c t^2 +c \Vert \hat u_{h,t} - u_h \Vert_{H^1}^2 + c t  \sup_{0 \leq \sigma \leq t} \Vert \nabla f \circ T_\sigma - \nabla f \Vert_{L^2}. \label{t2a}
\end{eqnarray}
In order to treat $T_3$ we write
\begin{eqnarray*}
\lefteqn{ j(T_t,\hat u_{h,t}, DT_t^{-\mathsf{T}} \nabla \hat u_{h,t}) -j(\cdot,u_h,\nabla u_h) } \\
& = & \int_0^1 \frac{d}{ds}  j(\cdot +stV_h,s \hat u_{h,t} +(1-s) u_h,s DT_t^{-\mathsf{T}} \nabla \hat u_{h,t} +(1-s) \nabla u_h) ds,
\end{eqnarray*}
use the growth assumptions on $j_x,j_u,j_z$ as well as \eqref{dif1} and derive
\begin{equation} \label{t3}
T_3 \leq ct \bigl( t + \Vert \hat u_{h,t} - u_h \Vert_{H^1} \bigr) \leq c t^2 + c \Vert \hat u_{h,t} - u_h \Vert_{H^1}^2.
\end{equation}
If we insert the estimates \eqref{t1}, \eqref{t2a} and \eqref{t3} into \eqref{difj} and recall \eqref{discsd} we obtain
\begin{equation} \label{difj1}
\mathcal J_h(\Omega_{h,t}) - \mathcal J_h(\Omega_h)  \leq    t \mathcal J_h'(\Omega_h)[V_h]  + ct \bigl( t  +  \sup_{0 \leq \sigma \leq t} \Vert \nabla f \circ T_\sigma - \nabla f \Vert_{L^2} \bigr)
+ c \Vert \hat u_{h,t} - u_h \Vert^2_{H^1}.
\end{equation}
In order to estimate $\Vert \hat u_{h,t} - u_h \Vert_{H^1}$ we combine \eqref{discstate} and \eqref{stateuht} to obtain
\begin{displaymath}
\int_{\Omega_h} A_t  \nabla ( \hat u_{h,t} - u_h) \cdot \nabla \eta_h \dx = \int_{\Omega_h} (I-A_t) \nabla u_h \cdot \nabla \eta_h \dx + \int_{\Omega_h} \bigl(
f \circ T_t  \, \mbox{det} D T_t -f \bigr) \eta_h \dx \qquad \forall \eta_h \in X_{\Omega_h}.
\end{displaymath}
In view of \eqref{idmat} there exists $0< \delta_2 \leq \delta_1$ such that $A_t \xi \cdot \xi \geq \frac{1}{2} | \xi |^2$ for all $\xi \in \mathbb R^d$ and $0 \leq t \leq \delta_2$. Inserting $\eta_h=
\hat u_{h,t}-u_h$ into the above relation and using \eqref{idmat} as well as \eqref{fdif} we infer with the help of Poincar\'e's inequality that 
\begin{eqnarray*}
\frac{1}{2} \int_D |  \nabla (\hat u_{h,t} - u_h) |^2 \dx  & \leq  &c t \Vert \nabla u_h \Vert_{L^2} \Vert \nabla (\hat u_{h,t} - u_h) \Vert_{L^2}
+ c t \Vert  f \Vert_{H^1}  \Vert \hat u_{h,t} -u_h \Vert_{L^2}  \\
& \leq & \frac{1}{4}  \Vert \nabla (\hat u_{h,t} - u_h) \Vert_{L^2}^2 + c t^2,
\end{eqnarray*}
from which we deduce  that $ \Vert \hat u_{h,t} - u_h \Vert_{H^1} \leq ct$. If we insert this bound into
\eqref{difj1} and use that $\mathcal J_h'(\Omega_h)[V_h] \leq - \epsilon$ we obtain
\begin{eqnarray}
\lefteqn{ \hspace{-1.5cm}
\mathcal J_h(\Omega_{h,t}) - \mathcal J_h(\Omega_h) 
 \leq     t \mathcal J_h'(\Omega_h)[V_h] + c t \bigl(t  +  \sup_{0 \leq \sigma \leq t} \Vert \nabla f \circ T_\sigma - \nabla f \Vert_{L^2} \bigr) }  \nonumber  \\
& \leq &  \gamma t \mathcal J_h'(\Omega_h)[V_h] +  c t \bigl( t  + \sup_{0 \leq \sigma \leq t} \Vert \nabla f \circ T_\sigma - \nabla f \Vert_{L^2} \bigr) -  (1- \gamma) \epsilon t. 
\label{final}
\end{eqnarray}
There exists $0< \delta \leq \delta_2$ such that $\sup_{0 \leq \sigma \leq t} \Vert \nabla f \circ T_\sigma - \nabla f \Vert_{L^2} \leq 
\frac{1}{2c} (1-\gamma)   \epsilon$ for $0 \leq t \leq  \delta$. This can be seen as usual by approximating $f_{x_i}$ by a continuous function $g_i$,
using the uniform continuity of $g_i$ on $\bar D$ and noting that by \eqref{Vhest} 
\begin{displaymath}
| T_\sigma(x) - x | = \sigma | V_h(x) | \leq \mbox{diam}(D) t,  \quad 0 \leq \sigma \leq t, \, x \in D.
\end{displaymath}
By choosing $\delta $ smaller if necessary we can achieve in addition that $\delta  \leq \frac{1}{2c} (1-\gamma)   \epsilon$. Inserting these bounds into \eqref{final} we obtain the result of the theorem.
\end{proof}

\noindent
We are now in position to prove our first convergence result.

\begin{thm} \label{conv1}
Let $(\Phi_h^k)_{k \in \mathbb N_0} \subset \hatXh$ and $(\Omega^k_h=\Phi^k_h(\Omref))_{k \in \mathbb N_0} \subset \mathcal S_h$ be the sequences generated by Algorithm \ref{alg:Steepest}. 
Then: \\[3mm]
(i) $\Vert \mathcal J_h'(\Omega^k_h) \Vert \rightarrow 0$ as $k \rightarrow \infty$. \\[2mm]
(ii) If $\sup_{k \in \mathbb N_0}  | (D \Phi^k_h)^{-1} | \leq C$, then there exists a subsequence
$(\Phi^{k_\ell}_h)_{\ell \in \mathbb N}$, which converges in $W^{1,\infty}(D)$ to  a mapping $\Phi_h \in \hatXh$ and   $\Omega_h:= \Phi_h(\Omref)$
 is a stationary point of $\mathcal J_h$, i.e.~satisfies $\mathcal J_h'(\Omega_h)[V_h]=0$ for all $V_h \in \mathcal V_{\Phi_h}$.
\end{thm}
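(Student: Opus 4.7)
My plan for (i) is an Armijo-type contradiction argument. First I would establish a uniform lower bound $\mathcal{J}_h(\Omega_h) \geq -C$ on $\mathcal{S}_h$: using $j(x,u,z) \geq -\varphi_1(x) - c_1(|u|^q + |z|^2)$ from \eqref{jest}, the a priori estimate $\|u_h\|_{H^1} \leq c\|f\|_{L^2}$ in \eqref{discapriori} and the embedding \eqref{Dembed} gives the bound. Step 3 of Algorithm \ref{alg:Steepest} renders $(\mathcal{J}_h(\Omega_h^k))$ monotone non-increasing and hence convergent. Arguing by contradiction, if $\|\mathcal{J}_h'(\Omega_h^k)\| \not\to 0$ then there exist $\epsilon > 0$ and a subsequence with $\mathcal{J}_h'(\Omega_h^{k_\ell})[V_h^{k_\ell}] \leq -\epsilon$. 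Lemma \ref{decrease} supplies $\delta \in (0,1)$, independent of $\ell$, such that the Armijo condition is satisfied for every $t \in [0,\delta]$, so the backtracking rule produces $t_{k_\ell} \geq \delta/2$. The Armijo decrease then gives $\mathcal{J}_h(\Omega_h^{k_\ell+1}) - \mathcal{J}_h(\Omega_h^{k_\ell}) \leq -\gamma\delta\epsilon/2$, contradicting the convergence of $(\mathcal{J}_h(\Omega_h^k))$.

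For (ii) the approach is finite-dimensional compactness followed by a limit argument for the variational characterisation of $V_h^{k_\ell}$. Under the hypothesis $\sup_k |(D\Phi_h^k)^{-1}| \leq C$, the piecewise affine inverses $(\Phi_h^k)^{-1}$ are globally Lipschitz with constant $C$, hence $|\Phi_h^k(x) - \Phi_h^k(y)| \geq C^{-1}|x-y|$ uniformly on $\bar D$. Since each $\Phi_h^k$ is continuous piecewise $P^1$ on the fixed triangulation $\mathcal{\hat T}_h$ with $\Phi_h^k(\bar D) = \bar D$ and $\Phi_h^k = \id$ on $\partial D$, the nodal values of $\Phi_h^k$ lie in a bounded subset of a fixed finite-dimensional space, and a subsequence converges in $W^{1,\infty}(D)$ to some piecewise affine $\Phi_h$ inheriting the boundary condition $\Phi_h = \id$ on $\partial D$ and the bilipschitz lower bound. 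The degree argument of Lemma \ref{homeo} then places $\Phi_h \in \hatXh$, so $\Omega_h := \Phi_h(\Omref) \in \mathcal{S}_h$.

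To prove stationarity I identify each $\mathcal{V}_{\Phi_h^{k_\ell}}$ with a fixed reference space $\hat{\mathcal{V}}$ of continuous piecewise $P^1$ vector fields on $\mathcal{\hat T}_h$ vanishing on $\partial D$ via the pullback $W_h \mapsto W_h \circ \Phi_h^{k_\ell}$, noting that the composition of two piecewise affine maps adapted to $\mathcal{\hat T}_h$ remains piecewise affine on $\mathcal{\hat T}_h$. Given $V_h \in \mathcal{V}_{\Phi_h}$ with $|DV_h| \leq 1$, set $\hat V_h := V_h \circ \Phi_h \in \hat{\mathcal{V}}$ and push it forward to $V_h^\ell := \hat V_h \circ (\Phi_h^{k_\ell})^{-1} \in \mathcal{V}_{\Phi_h^{k_\ell}}$. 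The elementwise relation $DV_h^\ell|_{\Phi_h^{k_\ell}(\hat T)} = D\hat V_h|_{\hat T} \cdot (D\Phi_h^{k_\ell}|_{\hat T})^{-1}$ and convergence of the finitely many matrices $(D\Phi_h^{k_\ell}|_{\hat T})^{-1} \to (D\Phi_h|_{\hat T})^{-1}$ yield $\|DV_h^\ell\|_{L^\infty} \leq 1 + \eta_\ell$ with $\eta_\ell \to 0$. Using the minimising property of $V_h^{k_\ell}$ and part (i),
\begin{displaymath}
\frac{\mathcal{J}_h'(\Omega_h^{k_\ell})[V_h^\ell]}{1 + \eta_\ell} \geq \mathcal{J}_h'(\Omega_h^{k_\ell})[V_h^{k_\ell}] \to 0.
\end{displaymath}
Pulling back to $\Omref$ turns \eqref{discsd} into a continuous function of the finite-dimensional variable $\Phi_h^{k_\ell}$, via continuous dependence of the nodal values of $u_h^{k_\ell}$ and $p_h^{k_\ell}$ on the pulled-back stiffness matrix and right-hand side, so passing to the limit gives $\mathcal{J}_h'(\Omega_h)[V_h] \geq 0$; applying the argument to $-V_h$ and using homogeneity yields $\mathcal{J}_h'(\Omega_h)[V_h] = 0$ for all $V_h \in \mathcal{V}_{\Phi_h}$.

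I expect the main obstacle to be organising the bookkeeping between the moving spaces $\mathcal{V}_{\Phi_h^{k_\ell}}$ and the limiting $\mathcal{V}_{\Phi_h}$ so as to preserve both the constraint (only asymptotically attained, handled by the rescaling factor $1 + \eta_\ell$) and the continuity of the shape-derivative map. For fixed $h$ the latter reduces to continuous dependence of a finite linear system on the geometry, and the uniform bilipschitz bound guarantees uniform coercivity of the pulled-back stiffness matrices, but some care is needed in tracking which norms to use on the moving triangulations; working throughout with nodal values on the fixed reference triangulation $\mathcal{\hat T}_h$ sidesteps this issue cleanly.
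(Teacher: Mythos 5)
Your proof follows essentially the same route as the paper: part (i) is the identical Armijo contradiction built on the uniform step-size bound from Lemma \ref{decrease}, and part (ii) uses the same finite-dimensional compactness, the same injectivity argument from the uniform bound on $(D\Phi_h^k)^{-1}$, and the same transported test field $V_h\circ\Phi_h\circ(\Phi_h^{k_\ell})^{-1}$ combined with part (i) and the minimising property of $V_h^{k_\ell}$. The only cosmetic difference is in the limit passage for $\mathcal J_h'$: you pull everything back to $\Omref$ and invoke continuous dependence on the nodal values of $\Phi_h^{k_\ell}$, whereas the paper transports $u_h^{k_\ell}, p_h^{k_\ell}$ to the limit domain $\Omega_h$ and proves their $H^1$-convergence there; your explicit $1+\eta_\ell$ rescaling of the gradient constraint is a slight sharpening of the paper's bound $\Vert D(V_h\circ T_k)\Vert_{L^\infty}\le c$.
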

\begin{proof} (i) Since $\mathcal J_h(\Omega^{k+1}_h) \leq \mathcal J_h(\Omega^k_h)$ and
\begin{eqnarray*}
\mathcal J_h(\Omega^k_h) & \geq &  - \int_{\Omega^k_h} | j(\cdot,u^k_h,\nabla u^k_h) | \dx \geq - \int_{\Omega^k_h} ( \varphi_1+ c_1 \bigl(  | u^k_h |^q + | \nabla u^k_h |^2 \bigr)  \dx  \\
& \geq & - c \bigl( 1+ \Vert u^k_h \Vert^q_{H^1}  \bigr)  \geq -c,
\end{eqnarray*}
we infer that $\lim_{k \rightarrow \infty} \mathcal J_h(\Omega^k_h)=:\beta \in \mathbb R$ exists. Then
\begin{displaymath}
\sum_{k=0}^{\infty} \bigl( \mathcal J_h(\Omega^{k}_h) - \mathcal J_h(\Omega^{k+1}_h) \bigr) = \mathcal J_h(\Omega^0_h) -\beta < \infty,
\end{displaymath}
so that
\begin{equation} \label{jkzero}
 \mathcal J_h(\Omega^{k}_h) - \mathcal J_h(\Omega^{k+1}_h) \rightarrow 0 \quad \mbox{ as } k \rightarrow \infty.
 \end{equation}
Suppose that   $\Vert \mathcal J_h'(\Omega^k_h) \Vert \nrightarrow 0$. Then there exists $\epsilon>0$ and a subsequence $(\Omega^{k_\ell}_h)_{\ell \in \mathbb N}$
such that $\Vert \mathcal J_h'(\Omega^{k_\ell}_h) \Vert \geq \epsilon$ for all $\ell \in \mathbb N$. In view of the definition of $V^k_h$  we infer that
\begin{equation} \label{leps}
\mathcal J_h'(\Omega^{k_\ell}_h)[V^{k_\ell}_h] = -  \|\mathcal{J}_h'(\Omega_h^{k_l})\| \leq - \epsilon \quad \mbox{ for all } \ell \in \mathbb N,
\end{equation}
and Lemma \ref{decrease} yields the existence of $\delta>0$ which is independent of $\ell \in \mathbb N$ such that
\begin{displaymath}
\mathcal J_h\bigl( (\mbox{id}+ t V^{k_\ell}_h)(\Omega^{k_\ell}_h) \bigr)  - \mathcal J_h(\Omega^{k_\ell}_h) \leq \gamma t \mathcal J_h'(\Omega^{k_\ell}_h)[V^{k_\ell}_h] 
 \qquad \mbox{ for all } 0 \leq t \leq \delta.
\end{displaymath}
Therefore we have that the Armijo step size satisfies $t_{k_\ell} \geq \frac{\delta}{2}$ for all $\ell \in \mathbb N$ from which we deduce with the help of
\eqref{leps} that
\begin{displaymath}
\mathcal J_h(\Omega^{k_\ell}_h) - \mathcal J_h(\Omega^{k_{\ell}+1}_h) \geq -\gamma t_{k_\ell} \mathcal J_h'(\Omega_h^{k_\ell})[V_h^{k_\ell}]  \geq \gamma t_{k_\ell} \epsilon
 \geq \gamma \frac{\delta}{2} \epsilon \qquad  \mbox{ for all } \ell \in \mathbb N
\end{displaymath}
contradicting \eqref{jkzero}. \\[2mm]
(ii) Since $\hatXh$ is a subset of a finite--dimensional space and  the sequence $(\Phi^k_h)_{k \in \mathbb N_0}$ is bounded (recall that $\Phi^k_h(\bar D) = \bar D$), there exists
a subsequence, again denoted by $(\Phi^k_h)_{k \in \mathbb N_0}$ and $\Phi_h \in C^0(\bar D, \mathbb R^d)$ such that $\Phi^k_h \rightarrow \Phi_h$ in $W^{1,\infty}(D)$. 
Furthermore, as $\sup_{k \in \mathbb N_0}  | (D \Phi^k_h)^{-1} | \leq C$ we have
\begin{displaymath}
| x_1 -x_2 | \leq C | \Phi^k_h(x_1) - \Phi_h^k(x_2) | \quad \mbox{ for all } x_1,x_2 \in \bar D, k \in \mathbb N_0
\end{displaymath}
from which we infer that $\Phi_h$ is injective by letting $k \rightarrow \infty$. Thus, $\Phi_h \in \mathcal U_h$. Let us show that
$\Omega_h:= \Phi_h(\Omref)$ is a stationary point of $\mathcal J_h$.
As  most of the necessary arguments have appeared in some form in the proof of Lemma \ref{decrease} we only sketch the main ideas. Let us define
$T_k:= \Phi_h \circ (\Phi^k_h)^{-1}$. Clearly $T_k \rightarrow \mbox{id}$ in $W^{1,\infty}(D,\mathbb R^d)$ as $k \rightarrow \infty$. Furthermore, let
$\hat u^k_h:= u^k_h \circ T_k^{-1} \in X_{\Omega_h}, \hat p^k_h:= p^k_h \circ T_k^{-1} \in X_{\Omega_h}$, where $u_h, p_h$ and $u^k_h,p^k_h$ are
the discrete state and adjoint state in $\Omega_h$ and $\Omega^k_h$ respectively. One can show similarly as
above that $\hat u^k_h \rightarrow u_h, \hat p^k_h \rightarrow p_h$ in $H^1(\Omega_h)$. Let us fix $V_h \in \mathcal V_{\Phi_h}$ and consider the terms
that appear in the formula \eqref{discsd} for $\mathcal J_h'(\Omega_h)[V_h]$. For the first integral we write
\begin{eqnarray*}
\lefteqn{
\int_{\Omega_h} j(\cdot,u_h,\nabla u_h) \Div V_h  \dx  } \\
&= &   \int_{\Omega_h} \bigl( j(\cdot,u_h,\nabla u_h) - j(\cdot,\hat u^k_h,\nabla \hat u^k_h) \bigr) \Div V_h  \dx + \int_{\Omega_h} j(\cdot,\hat u^k_h,\nabla
\hat u^k_h) 
\Div V_h \dx  \\
& = & \int_{\Omega_h} \int_0^1 j_u(\cdot,s u_h + (1-s) \hat u^k_h,s \nabla u_h + (1-s) \nabla \hat u^k_h) (u_h - \hat u^k_h) \ds \,  \Div V_h  \dx \\
& & + \int_{\Omega_h} \int_0^1  j_z(\cdot,s u_h + (1-s) \hat u^k_h,s \nabla u_h + (1-s) \nabla \hat u^k_h) \cdot \nabla (u_h - \hat u^k_h) \ds \,  \Div V_h  \dx  \\
& & + \int_{\Omega^k_h} j(\cdot,u^k_h,\nabla u^k_h) (\Div V_h) \circ T_k \,
\mbox{det} D T_k  \dx \\
& = & \int_{\Omega^k_h} j(\cdot,u^k_h,\nabla u^k_h) \Div( V_h \circ T_k)  \dx + o(1)
\end{eqnarray*}
since $\hat u^k_h \rightarrow u_h$ in $H^1(\Omega_h)$ and $T_k \rightarrow \mbox{id}$ in $W^{1,\infty}(D,\mathbb R^d)$. If we argue in a similar way for the other
terms in \eqref{decrease} we obtain that
\begin{displaymath}
\mathcal J_h'(\Omega_h)[V_h] = \mathcal J_h'(\Omega^k_h)[V_h \circ T_k] + o(1).
\end{displaymath}
Observing that $\Vert  D(V_h \circ T_k) \Vert_{L^\infty} \leq c$ we deduce with the help of (i) that $\mathcal J_h'(\Omega_h)[V_h]=0$. Since $V_h \in \mathcal V_{\Phi_h}$ was arbitrary, the
result follows.
\end{proof}

\section{Convergence of stationary shapes}\label{sec:ConvergenceOfStationaryPoints}
{We now investigate the convergence of stationary shapes when the discretisation parameter $h$ tends to zero. To begin with we first introduce two appropriate convergence measures for shapes.}

\subsection{Hausdorff convergence and Mosco--convergence}

Before we investigate the convergence of a sequence of stationary shapes we introduce two important concepts.  The Hausdorff complementary distance of  two open sets $\Omega_1, \Omega_2 \subset D$ is
defined as
\begin{displaymath}
\rho_H^c(\Omega_1,\Omega_2):= \max_{x \in \bar D} | d_{\complement \Omega_1}(x) - d_{\complement \Omega_2}(x) |,
\end{displaymath}
where $d_{\complement \Omega}(x):= \inf\{|x-y| : y \in \bar D \setminus \Omega\}$ for all $x \in D$,
and we say that $(\Omega_k)_{k \in \mathbb N}$  converges to $\Omega$ in the sense of the Hausdorff complementary metric if $\rho_H^c(\Omega_k,\Omega) \rightarrow 0,
k \rightarrow \infty$. Here  $\Omega_k, \Omega$ are open subsets of $D$. Since our optimisation problem is constrained by the elliptic boundary value problem \eqref{state} a stronger convergence
concept is required that ensures continuity of \eqref{state} with respect to $\Omega$ in an appropriate sense. For an open set $\Omega \subset D$ we shall view 
$H^1_0(\Omega)$ as a closed subspace of $H^1_0(D)$ by associating with each element $u \in H^1_0(\Omega)$ its extension by zero $e_0(u) \in H^1_0(D)$.

\begin{defn} Let $\Omega_k, \Omega$ be open subsets of $D$.  We say that  $(\Omega_k)_{k \in \mathbb N}$ converges to $\Omega$ in the sense of Mosco if 
the following conditions hold: \\[2mm]
(i) For every $u \in H^1_0(\Omega)$ there exists a sequence $(u_k)_{k \in \mathbb N}$ with $u_k \in H^1_0(\Omega_k)$ such that $e_0(u_k) \rightarrow e_0(u)$ in
$H^1_0(D)$. \\[2mm]
(ii) If $(u_{k_\ell})_{\ell \in \mathbb N}$ is a sequence with $u_{k_\ell} \in H^1_0(\Omega_{k_\ell})$ and $e_0(u_{k_\ell}) \rightharpoonup v$ in $H^1_0(D)$, then $v \in H^1_0(\Omega)$. 
\end{defn}

\noindent
In order to formulate a corresponding convergence result we recall that the 2--capacity of a set $A \subset U $ relative to an open bounded set $U$ is defined by 
\begin{displaymath}
\mbox{cap}(A,U):= \inf \left\lbrace \int_U | \nabla v |^2 \dx \, | \, v \in H^1_0(U), v \geq 1 \mbox{ a.e. in a neighbourhood of } A \right\rbrace.
\end{displaymath}

\noindent
\begin{defn} Let $\Omega \subset D$ be open. \\[2mm] 
a) We say that $\complement \Omega$  satisfies a capacity density condition,  if there exist $\alpha>0,r_0>0$ such that 
\begin{equation} \label{capdens}
  \frac{\mbox{cap}(B_r(x) \cap \complement \Omega,B_{2r}(x))}{\mbox{cap}(B_r(x),B_{2r}(x))} \geq \alpha \qquad \mbox{ for all } 0<r < r_0
  \mbox{ and all } x \in \partial \Omega.
\end{equation}
We denote by $\mathcal O_{\alpha,r_0}$ the collection of all open subsets $\Omega \subset D$ that satisfy \eqref{capdens}
with $\alpha>0,r_0>0$. 
\end{defn}

\begin{thm} \label{compact0} Let $(\Omega_k)_{k \in \mathbb N}$ be a sequence of open subsets of $D$ belonging to $\mathcal O_{\alpha,r_0}$, which
  converges in the sense of the Hausdorff complementary metric to an open set $\Omega$. Then $(\Omega_k)_{k \in \mathbb N}$ converges to $\Omega$ in the sense of Mosco.
\end{thm}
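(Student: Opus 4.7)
The plan is to verify the two Mosco conditions separately; only the second uses the uniform capacity density hypothesis.

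\emph{Condition (i).} Given $u \in H^1_0(\Omega)$, I would approximate it in $H^1_0(D)$ by $\varphi_n \in C^\infty_c(\Omega)$. Each $K_n := \mbox{supp}(\varphi_n)$ is compactly contained in $\Omega$, so $d_{\complement \Omega} \geq \delta_n > 0$ on $K_n$. Uniform convergence $d_{\complement \Omega_k} \to d_{\complement \Omega}$ on $\bar D$ (which is exactly Hausdorff complementary convergence) then yields $d_{\complement \Omega_k} > 0$ on $K_n$ for all $k$ large enough, i.e.\ $K_n \subset \Omega_k$ and hence $\varphi_n \in H^1_0(\Omega_k)$. A standard diagonal extraction produces the required recovery sequence. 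Note that this step uses only the Hausdorff complementary convergence, not the capacity density condition.

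\emph{Condition (ii).} Let $u_{k_\ell} \in H^1_0(\Omega_{k_\ell})$ with $e_0(u_{k_\ell}) \rightharpoonup v$ in $H^1_0(D)$; by Rellich--Kondrachov I may assume $e_0(u_{k_\ell}) \to v$ strongly in $L^2(D)$. The task is to show that the quasicontinuous representative $\tilde v$ of $v$ vanishes quasi-everywhere on $\complement \Omega$, which by Hedberg's characterization of $H^1_0$-functions is equivalent to $v \in H^1_0(\Omega)$. The main tool is the Maz'ya--Poincar\'e inequality: for $w \in H^1(B_{2r}(x_0))$ vanishing q.e.\ on a set $E \subset \bar B_r(x_0)$,
\begin{equation*}
\int_{B_r(x_0)} w^2 \dx \leq C\, \frac{r^2\, \mbox{cap}(B_r(x_0), B_{2r}(x_0))}{\mbox{cap}(E, B_{2r}(x_0))} \int_{B_{2r}(x_0)} |\nabla w|^2 \dx.
\end{equation*}
For every $x_0 \in \complement \Omega \cap D$, the Hausdorff complementary convergence produces $x_{k_\ell} \in \complement \Omega_{k_\ell}$ with $x_{k_\ell} \to x_0$. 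For $r < r_0/2$ and $\ell$ large, applying the inequality to $w = e_0(u_{k_\ell})$ with $E = \bar B_r(x_{k_\ell}) \cap \complement \Omega_{k_\ell}$ and using the uniform density bound \eqref{capdens} yields
\begin{equation*}
\int_{B_r(x_{k_\ell})} e_0(u_{k_\ell})^2 \dx \leq \frac{C}{\alpha}\, r^2 \int_{B_{2r}(x_{k_\ell})} |\nabla e_0(u_{k_\ell})|^2 \dx \leq \frac{CM}{\alpha}\, r^2,
\end{equation*}
with $M := \sup_\ell \|e_0(u_{k_\ell})\|_{H^1}^2 < \infty$. Strong $L^2$ convergence transfers this quadratic bound to $v$ on slightly shrunken balls about $x_0$, and a Vitali-type covering of $\complement \Omega$ combined with the Lebesgue-point theory for quasicontinuous representatives then forces $\tilde v = 0$ q.e.\ on $\complement \Omega$.

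The main obstacle will be the final capacity-theoretic step, where the integral decay estimates for $v$ have to be converted into quasi-everywhere vanishing of $\tilde v$: one must work with the precise representative of $v$ and use that quasicontinuous functions possess $L^2$-Lebesgue points outside a polar set, rather than merely an a.e.\ statement. A secondary technicality is that the balls $B_r(x_{k_\ell})$ are centered at moving points, which I would resolve by an inclusion of the form $B_{r/2}(x_0) \subset B_r(x_{k_\ell}) \subset B_{2r}(x_0)$ valid for $\ell$ large.
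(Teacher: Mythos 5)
Your handling of Mosco condition (i) is fine and, as you note, uses only the Hausdorff complementary convergence. For the record, the paper does not argue directly at all: it invokes Theorem 3.4.12 of Henrot--Pierre (capacity density condition plus $H^c$-convergence implies $\gamma$-convergence) followed by Proposition 3.5.5 there ($\gamma$-convergence is equivalent to Mosco convergence). So any self-contained argument is necessarily a different route, and the question is whether yours closes.

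It does not: there is a genuine gap in the last step of condition (ii). The Maz'ya--Poincar\'e inequality together with \eqref{capdens} gives, as you write,
\[
\int_{B_r(x_{k_\ell})} e_0(u_{k_\ell})^2 \dx \;\le\; \frac{C}{\alpha}\, r^2 \int_{B_{2r}(x_{k_\ell})} |\nabla e_0(u_{k_\ell})|^2 \dx \;\le\; \frac{C M}{\alpha}\, r^2,
\]
and hence in the limit $\int_{B_{r/2}(x_0)} v^2 \dx \le C M r^2/\alpha$. But the corresponding \emph{average} of $v^2$ over $B_{r/2}(x_0)$ is then only bounded by $C M r^{2-d}/\alpha$, which does not tend to $0$ as $r \to 0$: it is of order one for $d=2$ and blows up for $d \ge 3$. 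The Lebesgue-point argument for the quasicontinuous representative needs precisely that these averages vanish, so the estimate you derive cannot force $\tilde v(x_0)=0$. What is missing is smallness of the local energies $\int_{B_{2r}(x_{k_\ell})}|\nabla u_{k_\ell}|^2 \dx$ as $r \to 0$ \emph{uniformly in $\ell$}; this is false for a general bounded sequence (gradients may concentrate near $x_0$), and weak convergence only gives lower semicontinuity of these local quantities, which is the wrong direction. This is exactly why the standard proofs do not verify Mosco (ii) directly for arbitrary sequences: one first proves $\gamma$-convergence by establishing a uniform pointwise boundary decay estimate of Wiener/De Giorgi type, $|w_{\Omega_k}(x)| \le C |x - x_k|^{\beta}$ with $\beta = \beta(\alpha, d)>0$, for the \emph{solutions} of a fixed Dirichlet problem (for which the capacity density condition does yield uniform decay), identifies the $\gamma$-limit through its torsion function, and only then obtains condition (ii) for arbitrary sequences from the abstract equivalence of $\gamma$- and Mosco-convergence. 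Your argument would need to be restructured along these lines to be complete.
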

\begin{proof} In view of Theorem 3.4.12 in \cite{HenPie18} the sequence $(\Omega_k)_{k \in \mathbb N}$ $\gamma$--converges to $\Omega$. However, according
to Proposition 3.5.5 \cite{HenPie18} $\gamma$-convergence  and Mosco convergence are equivalent which implies the result.
\end{proof}

\noindent
In order to make use of the above result in the setting considered in our paper we require the following lemma.

\begin{lemma} \label{compact}
Let $\Omega=\Phi(\Omref)$ for some bilipschitz map $\Phi: \bar D \rightarrow \bar D$ satisfying
\begin{displaymath}
M^{-1}| x - y | \leq | \Phi(x)-\Phi(y) | \leq M | x-y | \qquad \mbox{ for all } x,y \in D.
\end{displaymath}
Then there exist $\alpha>0, r_0>0$ depending on $\Omref, d$ and $M$ such that $\Omega \in \mathcal O_{\alpha,r_0}$.
\end{lemma}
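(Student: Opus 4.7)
The plan is to proceed in two steps: first, transfer the exterior corkscrew condition \eqref{corkscrew} from $\Omref$ to $\Omega$ via the bilipschitz map $\Phi$; and second, deduce the capacity density condition \eqref{capdens} from the resulting corkscrew property of $\Omega$.

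For the transfer, I would pick $x \in \partial \Omega$ and write $x = \Phi(\hat x)$ with $\hat x \in \partial \Omref$, using that $\Phi$ is a homeomorphism of $\bar D$ onto itself sending $\Omref$ to $\Omega$ (so boundary maps to boundary). For small $r > 0$, set $s := \frac{Mr}{M^2 + \lambda}$ and apply \eqref{corkscrew} in $\Omref$ to obtain $\hat y \in B_s(\hat x)$ with $B_{\lambda s}(\hat y) \subset B_s(\hat x) \cap \complement \Omref$. Setting $y := \Phi(\hat y)$, the upper bilipschitz estimate gives $|y - x| \leq Ms$, while the lower bilipschitz estimate gives $B_{\lambda s/M}(y) \subset \Phi(B_{\lambda s}(\hat y))$, hence $B_{\lambda s/M}(y) \subset \complement \Omega$. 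The choice of $s$ is engineered so that $Ms + \lambda s/M \leq r$, which also guarantees $B_{\lambda s/M}(y) \subset B_r(x)$. Thus $\Omega$ enjoys an exterior corkscrew condition with constants $\lambda' := \lambda/(M^2 + \lambda)$ and $r_0'$ proportional to $s_0$, depending only on $\Omref$, $d$ and $M$.

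For the capacity density condition, I would fix $x \in \partial \Omega$ and $r \in (0, r_0')$, and use the ball $B_{\lambda' r}(y) \subset B_r(x) \cap \complement \Omega$ produced in Step~1. Monotonicity of capacity in the first argument gives
\begin{equation*}
\mbox{cap}(B_r(x) \cap \complement \Omega, B_{2r}(x)) \geq \mbox{cap}(B_{\lambda' r}(y), B_{2r}(x)),
\end{equation*}
and since $y \in B_r(x)$ forces $B_{2r}(x) \subset B_{3r}(y)$, monotonicity in the second argument further yields the lower bound $\mbox{cap}(B_{\lambda' r}(y), B_{3r}(y))$. Because this last quantity and $\mbox{cap}(B_r(x), B_{2r}(x))$ are explicit concentric annular capacities (scaling as $r^{d-2}$ times a prefactor depending only on $\lambda'$ when $d \geq 3$, and given by $2\pi/\log(3/\lambda')$ and $2\pi/\log 2$ when $d = 2$), the $r$-dependence cancels in the ratio, producing a lower bound $\alpha > 0$ depending only on $d$ and $\lambda'$, which is \eqref{capdens}.

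The only delicate step is the balance in Step~1: the radius $s$ in $\Omref$ must be small enough both for the image centre $\Phi(\hat y)$ to lie in $B_r(x)$ \emph{and} for the corkscrew ball about it (whose radius shrinks by a factor $1/M$ under $\Phi$) to still be contained in $B_r(x)$. These two requirements compete, and the choice $s = Mr/(M^2 + \lambda)$ is precisely what satisfies both simultaneously; everything else is monotonicity and scaling of capacity.
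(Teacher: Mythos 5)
Your proof is correct and follows essentially the same route as the paper: transfer the exterior corkscrew condition \eqref{corkscrew} from $\Omref$ to $\Omega$ through the bilipschitz map, then conclude via monotonicity and scaling of the relative capacity of concentric balls. The only (harmless) difference is in the bookkeeping: the paper takes $s=r/M$ and verifies $B_{\lambda r/M^{2}}(y)\subset B_r(x)$ by pulling points back through $\Phi$ into $B_s(\hat x)$, which yields the slightly larger corkscrew constant $\lambda/M^{2}$ instead of your $\lambda/(M^{2}+\lambda)$, and it cites \cite{HKM} for the final capacity ratio rather than computing the annular capacities explicitly.
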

\begin{proof} Let $r_0:= M s_0$ with $s_0$ as in \eqref{corkscrew}. For $x \in \partial \Omega$ there exists $\hat x \in \partial \Omref$ such that
$x=\Phi(\hat x)$. Given $0<r<r_0$, we let $s=\frac{r}{M} \in (0,s_0)$ and
choose $\hat y \in B_s(\hat x)$ such that $B_{\lambda s}(\hat y) \subset B_s(\hat x) \cap \complement \Omref$ according to \eqref{corkscrew}. Then $y=\Phi(\hat y)$ satisfies
\begin{displaymath}
| y - x | = | \Phi(\hat y) - \Phi(\hat x) | \leq M | \hat y - \hat x| < M s =r,
\end{displaymath}
so that $y \in B_r(x)$. We claim that 
\begin{equation} \label{cs1}
B_{\frac{\lambda r}{M^2}}(y) \subset B_r(x) \cap \complement \Omega.
\end{equation}
To see this, let $z \in B_{\frac{\lambda r}{M^2}}(y)$, say $z=\Phi(\hat z)$ for some $\hat z \in \bar D$.
Then,
\begin{displaymath}
 | \hat z - \hat y | \leq M | \Phi(\hat z) - \Phi(\hat y) |  = M  | z - y | < \frac{\lambda r}{M}  =   \lambda s.
\end{displaymath}
Hence $\hat z \in B_{\lambda s}(\hat y)$ and therefore $\hat z \in B_s(\hat x) \cap \complement \Omref$. Then $z=\Phi(\hat z) \in \complement \Omega$ with $| z- x|  \leq M | \hat z - \hat x | < Ms =r$ implying 
\eqref{cs1}. Since $B_{2r}(x) \subset B_{3r}(y)$ we deduce from \eqref{cs1}
\begin{displaymath}
  \frac{\mbox{cap}(B_r(x) \cap \complement \Omega,B_{2r}(x))}{\mbox{cap}(B_r(x),B_{2r}(x))} \geq   \frac{\mbox{cap}(B_{\frac{\lambda r}{M^2}}(y),B_{3r}(y))}{\mbox{cap}(B_r(x),B_{2r}(x))} \geq \alpha,
\end{displaymath}
where $\alpha$ only depends on $\lambda,M$ and $d$. Here, the last inequality can be shown as in the proof of Theorem 6.31 in \cite{HKM}.
\end{proof}

\subsection{Convergence of discrete stationary shapes}

\noindent
Let  $(\mathcal { \hat T}_h)_{0< h \leq h_0}$ be a regular family of triangulations of $\bar D$ in the sense that
there exists $\sigma>0$ such that
\begin{equation}  \label{regular}
  \frac{h_{\hat T}}{\rho_{\hat T}} \leq \sigma \quad \forall \hat T \in \mathcal { \hat T}_h, \; 0< h \leq h_0.
\end{equation}
Here $h_{\hat T}$ is the diameter of $\hat T$ and $\rho_{\hat T}$ the diameter of
the largest ball contained in
$\hat T$. 
We consider the corresponding sequence of discrete shape functionals $\mathcal J_h: \mathcal S_h \rightarrow \mathbb R$ given by \eqref{discshapeopt}. In what follows we assume the existence
of a sequence $(\Omega_h)_{0<h \leq h_0}$ such that $\Omega_h = \Phi_h(\Omref)$ for some  $\Phi_h \in \hatXh$ and \\[2mm]
(A1) $\forall 0< h \leq h_0 \;  \forall V_h \in \mathcal V_{\Phi_h}:  \quad \mathcal J_h'(\Omega_h)[V_h] = 0$; \\[2mm]
(A2) $\exists M>1 \; \forall 0< h \leq h_0 \;  \forall x,y \in D: \quad M^{-1}| x - y | \leq | \Phi_h(x)-\Phi_h(y) | \leq M | x-y |$. \\[2mm]
Assumption (A1)  states that the sequence $(\Omega_h)_{0<h \leq h_0}$  is a sequence of stationary points, while (A2) can be interpreted as a compactness property of these sets.
Such a condition appears in \cite{BarWac20}, where it  occurs in the convergence analysis  for a sequence of discrete minima.

\noindent

\begin{thm} \label{conv3a} Let $\Omega_h=\Phi_h(\Omref)$, where $\Phi_h \in \hatXh$ and $(\Phi_h)_{0 < h \leq h_0}$ satisfies (A2).  
  Then there exists a sequence $(h_k)_{k \in \mathbb N}$
  with $\lim_{k \rightarrow \infty} h_k=0$ and a map $\Phi \in \mathcal U$ such that: \\[3mm]
  (i) $\Phi_{h_k} \rightarrow \Phi$ uniformly in $\bar D$, $\rho_H^c( \Omega_{h_k},\Omega) \rightarrow 0$ as $ k \rightarrow \infty$, where $\Omega=\Phi(\Omref)$; \\[2mm]
(ii) $e_0(u_{h_k}) \rightarrow e_0(u)$ in $H^1_0(D)$, where $u_{h_k} \in X_{\Omega_{h_k}}$ solves \eqref{discstate}, $u \in H^1_0(\Omega)$ solves
\eqref{state}; \\[2mm]
(iii) $e_0(p_{h_k}) \rightarrow e_0(p)$ in $H^1_0(D)$, where $p_{h_k} \in X_{\Omega_{h_k}}$ solves \eqref{discadj}, $p \in H^1_0(\Omega)$ solves
\eqref{adj}.
\end{thm}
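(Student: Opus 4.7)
The plan is to establish (i)--(iii) in sequence, using (A2) both to provide compactness of $(\Phi_h)$ and to furnish a uniform capacity density condition for the sets $\Omega_h$.

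For (i), I would first note that $(\Phi_h)$ is uniformly bounded in $W^{1,\infty}(D,\mathbb{R}^d)$: the inclusion $\Phi_h(\bar D)=\bar D$ yields a uniform $L^\infty$--bound, and (A2) a uniform Lipschitz bound. Arzel\`a--Ascoli then produces a subsequence $\Phi_{h_k}$ converging uniformly on $\bar D$ to some $\Phi$, and since the two-sided Lipschitz estimate in (A2) passes to the uniform limit, $\Phi$ is bi-Lipschitz with constant $M$ and equals the identity on $\partial D$, hence $\Phi \in \mathcal U$. For the Hausdorff-complementary convergence I use that $\bar D \setminus \Omega_{h_k} = \Phi_{h_k}(\bar D \setminus \Omref)$ and $\bar D \setminus \Omega = \Phi(\bar D \setminus \Omref)$ (both maps are homeomorphisms of $\bar D$); pulling back the nearest point by one map and transporting by the other gives
\[
| d_{\complement \Omega_{h_k}}(x) - d_{\complement \Omega}(x) | \leq \| \Phi_{h_k} - \Phi \|_{L^\infty(D)} \quad \mbox{for all } x \in \bar D,
\]
so $\rho_H^c(\Omega_{h_k},\Omega) \to 0$.

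For (ii), I first invoke Lemma \ref{compact}: assumption (A2) places every $\Omega_{h_k}$, as well as $\Omega$, into the class $\mathcal{O}_{\alpha, r_0}$ with constants depending only on $\Omref$, $d$ and $M$. Combined with (i), Theorem \ref{compact0} then yields Mosco--convergence $\Omega_{h_k} \to \Omega$. The a priori bound \eqref{discapriori} gives $\| u_{h_k} \|_{H^1_0(D)} \leq c \| f \|_{L^2}$, so along a further subsequence $e_0(u_{h_k})$ converges weakly in $H^1_0(D)$ to some $v$, and Mosco property (ii) forces $v \in H^1_0(\Omega)$. To identify $v = u$, I fix $\eta \in C_c^\infty(\Omega)$; by Hausdorff-complementary convergence $\supp{\eta} \Subset \Omega_{h_k}$ for $k$ large, and since the push-forward triangulation $\mathcal T_{\Omega_{h_k}}$ is uniformly shape-regular (the reference mesh satisfies \eqref{regular} and $\Phi_{h_k}$ is affine on each $\hat T$ with Jacobian controlled by $M$), the Lagrange interpolant $I_{h_k}\eta \in X_{\Omega_{h_k}}$ satisfies $e_0(I_{h_k}\eta) \to e_0(\eta)$ in $H^1_0(D)$. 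Testing \eqref{discstate} with $I_{h_k} \eta$ and passing to the limit shows that $v$ solves \eqref{state}, so $v = u$, and uniqueness promotes the convergence to the full subsequence. Strong convergence then follows from
\[
\int_D | \nabla e_0(u_{h_k}) |^2 \dx = \langle f, u_{h_k} \rangle \to \langle f, u \rangle = \int_D | \nabla e_0(u) |^2 \dx
\]
together with weak convergence.

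For (iii), I would proceed analogously: the bound \eqref{discapriori} yields, along a further subsequence, weak convergence of $e_0(p_{h_k})$ to some $\pi \in H^1_0(\Omega)$ (again via Mosco property (ii)). The main technical step, and the one I expect to be the chief obstacle, is passing to the limit in the right-hand side of \eqref{discadj}, which depends nonlinearly on $(u_{h_k},\nabla u_{h_k})$. Starting from the strong $H^1_0(D)$--convergence of $u_{h_k}$ established in (ii), a further subsequence gives pointwise a.e.\ convergence of $u_{h_k}$ and $\nabla u_{h_k}$; the growth conditions \eqref{j1est}--\eqref{j2est} combined with the embedding \eqref{Dembed} provide a majorant whose $L^1$--norm converges, so Vitali's theorem gives $j_u(\cdot,u_{h_k},\nabla u_{h_k}) \to j_u(\cdot,u,\nabla u)$ in $L^{q/(q-1)}(D)$ and $j_z(\cdot,u_{h_k},\nabla u_{h_k}) \to j_z(\cdot,u,\nabla u)$ in $L^2(D)$. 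Testing \eqref{discadj} with $I_{h_k}\eta$ for $\eta \in C_c^\infty(\Omega)$ and passing to the limit identifies $\pi = p$; strong convergence of $p_{h_k}$ then follows by testing \eqref{discadj} with $p_{h_k}$ itself and matching the limiting energy.
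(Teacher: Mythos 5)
Your proposal is correct and follows essentially the same route as the paper's proof: Arzel\`a--Ascoli plus the transport estimate $\rho_H^c(\Omega_{h_k},\Omega)\leq \Vert \Phi_{h_k}-\Phi\Vert_{C^0}$ for (i), Lemma \ref{compact} and Theorem \ref{compact0} to get Mosco--convergence and identification of the weak limit by testing with interpolants of $C^\infty_0(\Omega)$ functions for (ii), and a.e.\ convergence with a convergent majorant (the paper uses the generalised dominated convergence theorem where you invoke Vitali) for (iii). The only cosmetic differences are that you make the strong-convergence energy argument explicit where the paper cites Corollary 3.2.2 of \cite{HenPie18}, and you pass the bi-Lipschitz bound to the limit directly rather than also extracting a limit of $\Phi_{h_k}^{-1}$.
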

\begin{proof}
(i)  In view of (A2) the sequences $(\Phi_h)_{0,h \leq h_0}$ and $(\Phi^{-1}_h)_{0< h \leq h_0}$ are uniformly bounded and uniformly equicontinuous, so that the Arzela--Ascoli theorem implies that there exists a sequence $h_k \rightarrow 0$ and  functions
$\Phi, \Psi \in W^{1,\infty}(D, \mathbb R^d)$ such that $\Phi=\mbox{id}$ on $\partial D$ and 
\begin{displaymath}
\Phi_{h_k} \rightarrow \Phi, \quad \Phi_{h_k}^{-1} \rightarrow \Psi  \mbox{ in } C^0(\bar D, \mathbb R^d) \; \mbox{ as } \; k \rightarrow \infty.
\end{displaymath}
Clearly, $\Phi:\bar D \rightarrow \bar D$ is bilipschitz so that $\Phi \in \mathcal U$. 
Let $\Omega=\Phi(\Omref)$. We claim that
\begin{equation} \label{hausdorff}
\rho_H^c( \Omega_{h_k},\Omega) \leq \Vert \Phi_{h_k} - \Phi \Vert_{C^0(\bar D, \mathbb R^d)}.
\end{equation}
To see this, let $x \in \bar D$ and choose $ y \in \bar D \setminus \Omega$ such that $d_{\complement \Omega}(x) = | x -  y|$. In view of
the definition of $\Omega$ there exists $z \in \bar D \setminus \Omref$ such that $y= \Phi(z)$. Then, $y_k:= \Phi_{h_k}(z) \in \bar D \setminus \Omega_{h_k}$
and therefore 
\begin{displaymath}
d_{\complement \Omega_{h_k} }(x) - d_{\complement \Omega}(x) \leq | x - y_k | - | x-y | \leq | y_k - y |
\leq \Vert \Phi_{h_k} - \Phi \Vert_{C^0(\bar D,\mathbb R^d)}.
\end{displaymath}
By exchanging the roles of $\Omega_{h_k}$ and $\Omega$ we deduce \eqref{hausdorff}, which implies that
$\rho_H^c( \Omega_{h_k},\Omega) \rightarrow 0$ as $ k \rightarrow \infty$.  \\[2mm]
(ii) Our line of argument is similar as in \cite{CZ06}. Since $(e_0(u_{h_k}))_{k \in \mathbb N}$ is bounded in $H^1_0(D)$ we may assume after possibly extracting a subsequence that there exists $u^* \in H^1_0(D)$ such that 
$e_0(u_{h_k}) \rightharpoonup u^*$ in $H^1_0(D)$.  In view of  (A2) and Lemma \ref{compact} we may apply Theorem \ref{compact0} so that 
$(\Omega_{h_k})_{k \in \mathbb N}$ converges to $\Omega$ in the sense of Mosco. In particular we infer that $u^* \in H^1_0(\Omega)$. In order to see that 
$u^*$ is the solution of \eqref{state} we fix $\eta \in C^\infty_0(\Omega)$ and set $K:=\mbox{supp} \eta$. Since $\rho_H^c( \Omega_{h_k},\Omega) \rightarrow 0$, Proposition 2.2.17 in \cite{HenPie18} implies that
there exists $k_0 \in \mathbb N$ such that $K \subset \Omega_{h_k}$ for all $k \geq k_0$. Let us denote by $I_{h_k} \eta \in X_{\Omega_{h_k}}$ the standard Lagrange interpolation
of $\eta$, for which we have
\begin{displaymath}
\Vert \eta - I_{h_k} \eta \Vert_{H^1_0(D)} \leq c h_k \Vert \eta \Vert_{H^2(D)}.
\end{displaymath}
Here we have used the fact that 
the family of triangulations $\bigl( \Phi_h(\hat T) \, | \, \hat T \in \hat{ \mathcal T}_h \bigr)_{0< h \leq h_0}$ is regular with
\begin{equation} \label{regular1} 
  \frac{h_T}{\rho_T} \leq M^2 \frac{h_{\hat T}}{\rho_{\hat T}} \leq M^2 \sigma, \quad T=\Phi_h(\hat T), \,
    0 < h \leq h_0,
\end{equation}
where $\sigma$ appears in \eqref{regular} and we have again applied (A2). Thus $I_{h_k} \eta \rightarrow \eta$ in $H^1_0(D)$ and by inserting $I_{h_k}\eta$ into \eqref{discstate} 
we obtain
\begin{displaymath}
\int_\Omega \nabla u^* \cdot \nabla \eta \dx = \lim_{k \rightarrow \infty} \int_D \nabla e_0(u_{h_k}) \cdot \nabla I_{h_k} \eta \dx = \lim_{k \rightarrow \infty} \int_D f I_{h_k} \eta \dx = \int_\Omega f \eta \dx.
\end{displaymath}
Hence $u^*=e_0(u)$, where $u$ is the solution of \eqref{state}.  A standard argument (see Corollary 3.2.2 in \cite{HenPie18}) then shows that $e_0(u_{h_k}) \rightarrow
e_0(u)$ in $H^1_0(D)$ as $k \rightarrow \infty$. \\
(iii) In the same way as in (ii) we infer that there exists $p^* \in H^1_0(\Omega)$ such that $e_0(p_{h_k})
\rightharpoonup p^*$ in $H^1_0(D)$ as $k \rightarrow \infty$.
{After possibly extracting a further subsequence we may assume that $e_0(u_{h_k}) \rightarrow e_0(u)$ and
$\nabla e_0(u_{h_k}) \rightarrow \nabla e_0(u)$ almost everywhere in $D$.  We claim that 
\begin{eqnarray}
j_u(\cdot,e_0(u_{h_k}),\nabla e_0(u_{h,k})) &\rightarrow & j_u(\cdot,e_0(u), \nabla e_0(u)) \quad \mbox{ in } L^{\frac{q}{q-1}}(D), \label{convph1} \\
j_z(\cdot,e_0(u_{h_k}),\nabla e_0(u_{h,k})) & \rightarrow & j_z(\cdot,e_0(u), \nabla e_0(u)) \quad \mbox{ in } L^2(D,\mathbb R^d).\label{convph2}
\end{eqnarray}
In order to show \eqref{convph2} we set $f_k:=| j_z(\cdot,e_0(u_{h_k}),\nabla e_0(u_{h,k})) - j_z(\cdot,e_0(u), \nabla e_0(u)) |^2$. Clearly, $f_k \rightarrow 0$ a.e.~in $D$, while
\eqref{j2est} implies that 
\begin{displaymath}
f_k \leq c \bigl( \varphi_3^2 + | u_{h_k} |^q + | \nabla u_{h_k} |^2 + | u |^q + | \nabla u |^2 \bigr)=: g_k.
\end{displaymath}
We have that $g_k \rightarrow g:=c \bigl( \varphi_3^2 + 2 | u |^q + 2| \nabla u |^2 \bigr)$ a.e.~in $D$ as well as $\int_D g_k dx \rightarrow \int_D g dx$ as $k
\rightarrow \infty$, so that the generalised Lebesgue dominated convergence theorem yields \eqref{convph2}.
The relation \eqref{convph1} is proved in the same way.
With the help of \eqref{convph1} and \eqref{convph2}} we obtain similarly as above that  $p^*=e_0(p)$, where $p$ is the solution of \eqref{adj} and then again
$e_0(p_{h_k}) \rightarrow e_0(p)$ in $H^1_0(D)$ as $ k \rightarrow \infty$. 
\end{proof}

\noindent
We can now examine the convergence of a sequence of discrete stationary points as $h \rightarrow 0$. 

\begin{thm} \label{conv3} Suppose that  $(\Omega_h)_{0 <h \leq h_0}$ satisfies (A1) and (A2). 
Then there exists a sequence $(h_k)_{k \in \mathbb N}$ with $\lim_{k \rightarrow \infty} h_k=0$ and an open set $\Omega \Subset D$ such that $\rho_H^c( \Omega_{h_k},\Omega) \rightarrow 0$ as $ k \rightarrow \infty$.
Furthermore,  $\Omega$ is a stationary  point for $\mathcal J$ on $\mathcal{S}$.  
\end{thm}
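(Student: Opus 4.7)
The plan is to combine Theorem~\ref{conv3a} with the stationarity hypothesis (A1) via a density argument for the test fields. First I invoke Theorem~\ref{conv3a} to pass to a subsequence $(h_k)_{k\in\mathbb{N}}$ with $h_k\to 0$ along which $\Phi_{h_k}\to\Phi$ uniformly on $\bar D$, $\rho_H^c(\Omega_{h_k},\Omega)\to 0$ with $\Omega=\Phi(\Omref)$, and $e_0(u_{h_k})\to e_0(u)$, $e_0(p_{h_k})\to e_0(p)$ strongly in $H^1_0(D)$. After a further subsequence I may additionally assume pointwise a.e.\ convergence on $D$ of $u_{h_k},\nabla u_{h_k},p_{h_k},\nabla p_{h_k}$ to their limits, and I record that $|\partial\Omega|=|\Phi(\partial\Omref)|=0$ since $\Phi$ is bilipschitz and $\Omref$ is polygonal.

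Fix an arbitrary $V\in W^{1,\infty}_0(D,\mathbb R^d)$. Mollification of the zero-extension of $V$ combined with a smooth cut-off near $\partial D$ produces a family $V^\epsilon\in C^\infty_c(D,\mathbb R^d)$ with $V^\epsilon\to V$ in $W^{1,p}(D,\mathbb R^d)$ for every $p<\infty$ and $\|V^\epsilon\|_{W^{1,\infty}}\le c\|V\|_{W^{1,\infty}}$ (the bound on the cut-off term using $|V(x)|\le\|DV\|_{L^\infty}\dist(x,\partial D)$). For each $\epsilon>0$ I take $V^\epsilon_{h_k}\in\mathcal V_{\Phi_{h_k}}$ to be the Lagrange interpolant of $V^\epsilon$ on $\mathcal T_{\Omega_{h_k}}$; the regularity estimate \eqref{regular1}, smoothness of $V^\epsilon$, and $h_k\to 0$ imply $V^\epsilon_{h_k}\to V^\epsilon$ in $W^{1,\infty}(D,\mathbb R^d)$ as $k\to\infty$. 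By (A1) we have $\mathcal J_{h_k}'(\Omega_{h_k})[V^\epsilon_{h_k}]=0$ for every $k$.

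The central step is to pass to the limit $k\to\infty$ in this identity, term by term in the formula \eqref{discsd}, so as to recover \eqref{firstvar} with $V$ replaced by $V^\epsilon$. Integrals containing a factor $\nabla u_{h_k}$ or $\nabla p_{h_k}$ (including $\int\Div(fV^\epsilon_{h_k})p_{h_k}\dx$, which after integration by parts equals $-\int fV^\epsilon_{h_k}\cdot\nabla p_{h_k}\dx$) extend by zero to $D$ and are handled directly by the strong $H^1_0(D)$--convergence of state and adjoint, the $W^{1,\infty}$--convergence of $V^\epsilon_{h_k}$, and the strong $L^1$/$L^2$--convergences of $j_u(\cdot,u_{h_k},\nabla u_{h_k})$ and $j_z(\cdot,u_{h_k},\nabla u_{h_k})$ established in the proof of Theorem~\ref{conv3a}(iii). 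The main difficulty is with the integrals involving only $u_{h_k}$ and $\nabla u_{h_k}$, where the change of the integration domain from $\Omega_{h_k}$ to $\Omega$ must be tracked explicitly; this is overcome by showing $\chi_{\Omega_{h_k}}\to\chi_\Omega$ almost everywhere in $D$. Indeed, for a.e.\ $x\in D$ one may write $x=\Phi(\hat x)$ with either $\hat x\in\Omref$ or $\hat x\in D\setminus\overline{\Omref}$, and the uniform convergence $\Phi_{h_k}^{-1}\to\Phi^{-1}$ (also established in the proof of Theorem~\ref{conv3a}(i)) gives $x\in\Omega_{h_k}$ eventually in the first case and $x\notin\Omega_{h_k}$ eventually in the second, the exceptional set $\Phi(\partial\Omref)$ being of Lebesgue measure zero. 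Combined with the growth bounds \eqref{jest}--\eqref{j4est} and the generalised dominated convergence theorem, this yields $\mathcal J'(\Omega)[V^\epsilon]=0$.

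Finally I let $\epsilon\to 0$. Since $V^\epsilon\to V$ and $DV^\epsilon\to DV$ almost everywhere on $D$ (after extracting a subsequence) while $\|V^\epsilon\|_{W^{1,\infty}}$ is uniformly bounded, the growth conditions \eqref{jest}--\eqref{j4est} provide integrable dominants for each integrand of \eqref{firstvar}, and dominated convergence gives $\mathcal J'(\Omega)[V^\epsilon]\to\mathcal J'(\Omega)[V]$. Consequently $\mathcal J'(\Omega)[V]=0$, and since $V$ was arbitrary, $\Omega$ is a stationary point of $\mathcal J$ on $\mathcal S$.
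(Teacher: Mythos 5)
Your proof is correct and follows the same overall strategy as the paper: invoke Theorem~\ref{conv3a} for the subsequence and the convergences of $\Phi_{h_k}$, $u_{h_k}$, $p_{h_k}$; establish $\chi_{\Omega_{h_k}}\to\chi_\Omega$ a.e.\ in $D$; insert an interpolated test field into the stationarity identity (A1) and pass to the limit term by term using the growth conditions \eqref{jest}--\eqref{j2est} and generalised dominated convergence. There are two technical points where you diverge. First, the paper tests directly with $V_k=I_{h_k}V$ for the merely Lipschitz field $V$, observing that $V_k\to V$ in $L^\infty$ while $DV_k\overset{*}{\rightharpoonup}DV$ in $L^\infty$ along a subsequence; the weak-$*$ convergence suffices because the coefficient functions $\chi_{\Omega_{h_k}}\nabla e_0(u_{h_k})\cdot\nabla e_0(p_{h_k})$ etc.\ converge \emph{strongly} in $L^1$. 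Your mollification layer $V^\epsilon$ buys strong $W^{1,\infty}$ convergence of the interpolants at the price of an additional limit $\epsilon\to 0$ at the end (which you justify correctly via the uniform $W^{1,\infty}$ bound and dominated convergence); this is sound but strictly more work than needed. Second, for the a.e.\ convergence of the indicator functions the paper cites a result from \cite{DelZol01} for points of $\Omega$ and runs a compactness argument for points of $D\setminus\bar\Omega$, whereas you derive both cases at once from the uniform convergence $\Phi_{h_k}^{-1}\to\Phi^{-1}$; your version is arguably cleaner and self-contained, relying only on what is already established in the proof of Theorem~\ref{conv3a}(i).
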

\begin{proof}
 We infer from Theorem \ref{conv3a} that there exists a sequence $(h_k)_{k \in \mathbb N}$ with $\lim_{k \rightarrow \infty} h_k=0$
 and a bilipschitz map $\Phi\colon \bar D \rightarrow \bar D$ such that for $\Omega=\Phi(\Omref)$ we have
 \begin{equation} \label{limits}
\rho_H^c( \Omega_{h_k},\Omega) \rightarrow 0, \; \;  e_0(u_{h_k}) \rightarrow e_0(u) \mbox{ in }  H^1_0(D), \; \; e_0(p_{h_k}) \rightarrow e_0(p)  \; \;\mbox{ in }  H^1_0(D)\mbox{ as } k \rightarrow \infty
\end{equation}
where $u_{h_k}$, $u$, $p_{h_k}$, and $p$ are as in Theorem \ref{conv3a}.
In order to show that $\Omega$ is a stationary point for $\mathcal J$ on $S$ we first claim that
\begin{equation} \label{indicator}
\chi_{\Omega_{h_k}} \rightarrow \chi_{\Omega} \quad \mbox{ a.e. in  } D.
\end{equation}
Since $\Phi$ is bilipschitz and $ \partial \Omref$ has measure 0 we infer that the same is true for
$\partial \Omega = \Phi(\partial \Omref)$, so that it is sufficient to prove that $\chi_{\Omega_{h_k}}(x) \rightarrow \chi_{\Omega} (x)$ for all $x \in \Omega \cup D \setminus \bar \Omega$. 
To begin, Corollary 1 in Chapter 6, Section 4 of \cite{DelZol01} implies that 
$\chi_{\Omega_{h_k}}(x) \rightarrow \chi_{\Omega}(x)$ for all $x \in 
\Omega$. Next, let $x \in D \setminus \bar \Omega$. We claim that there exists $k_0 \in \mathbb N$ such that $x \in D \setminus \bar \Omega_{h_k}$
for all $k \geq k_0$. Otherwise there is a subsequence $(k_\ell)_{\ell \in \mathbb N}$ and $y_{k_\ell} \in \overline{\Omref}$ such that $\Phi_{h_{k_\ell}}(y_{k_\ell})=x$
for all $\ell \in \mathbb N$. By passing to a further subsequence we may assume that $y_{k_\ell} \rightarrow y$ for some $y \in \overline{\Omref}$, which
together with the uniform convergence of $(\Phi_{h_k})_{k \in \mathbb N}$ to $\Phi$ 
implies that $\Phi(y)=x$, a contradiction. Therefore $\lim_{k \rightarrow \infty} \chi_{\Omega_{h_k}}(x) = 0 = \chi_{\Omega}(x)$ and  \eqref{indicator} holds.  \\
Let us  fix $V \in W^{1,\infty}_0(D,\mathbb R^d)$ and set $V_k:= I_{h_k} V \in \mathcal V_{\Phi_{h_k}}$. 
 We may assume after possibly extracting a further subsequence that
\begin{equation} \label{weakconv}
V_k \rightarrow V \; \mbox{ in } L^\infty(D;\mathbb R^d) \quad \mbox{ and } \quad D V_k \overset{*}{\rightharpoonup} DV \mbox{ in }  L^\infty(D; \mathbb R^{d \times d}).
\end{equation}
As $\Omega_{h_k}$ is a stationary point for $\mathcal J_{h_k}$ we have
\begin{eqnarray}
0 & = & \mathcal J_{h_k}'(\Omega_{h_k})[V_k] = \int_{\Omega_{h_k}} \Bigl( 
\bigl( DV_k + D V_k^{\mathsf{T}} - \Div V_k I \bigr)  \nabla u_{h_k} \cdot \nabla p_{h_k}
+ \Div( f V_k) p_{h_k}   \Bigr) \dx \nonumber \\
&  & +  \int_{\Omega_{h_k}} \Bigl( j(\cdot,u_{h_k},\nabla u_{h_k}) \Div V_k +  j_x(\cdot,u_{h_k},\nabla u_{h_k}) \cdot V_k - j_z(\cdot,u_{h_k},\nabla u_{h_k}) \cdot DV_k^{\mathsf{T}} \nabla u_{h_k} \Bigr) \dx  \nonumber  \\
& =: & A_k+B_k. \label{akbk}
\end{eqnarray}
In view of  \eqref{indicator} and \eqref{limits} it is not difficult to verify that 
\begin{displaymath}
\chi_{\Omega_{h_k}} \nabla e_0(u_{h_k}) \cdot \nabla e_0(p_{h_k}) \rightarrow \chi_{\Omega} \nabla e_0(u) \cdot \nabla e_0(p) \mbox{ in } L^1(D),
\end{displaymath}
which together with \eqref{weakconv} yields
\begin{eqnarray*}
A_k & = &  \int_D \chi_{\Omega_{h_k}}  \Bigl(  \bigl( DV_k + D V_k^{\mathsf{T}} - \Div V_k I \bigr)  \nabla e_0(u_{h_k}) \cdot \nabla e_0(p_{h_k}) + \Div( f V_k) e_0(p_{h_k})   \Bigr) \dx \\
& \rightarrow & \int_D \chi_{\Omega}  \Bigl( \bigl( DV + D V^{\mathsf{T}} - \Div V I \bigr)  \nabla e_0(u) \cdot \nabla e_0(p)
+ \Div( f V) p   \Bigr) \dx \\
& = & \int_\Omega  \Bigl(  \bigl( DV + D V^{\mathsf{T}} - \Div V I \bigr)  \nabla u \cdot \nabla p
+ \Div( f V) p   \Bigr) \dx.
\end{eqnarray*} 
Similarly we have
\begin{displaymath}
B_k \rightarrow \int_{\Omega} \bigl( j(\cdot,u,\nabla u) \Div V  + j_x(\cdot,u,\nabla u)  V - j_z(\cdot,u,\nabla u) \cdot DV^{\mathsf{T}} \nabla u \bigr) \dx.
\end{displaymath}
Passing to the limit in \eqref{akbk} we deduce that  $\mathcal J'(\Omega)[V]=0$. 
\end{proof}

\begin{remark}\label{rem:penalty}
We briefly describe how our analysis can be generalized to a setting in which an additional constraint is imposed on the admissible sets.
In particular, we consider a volume constraint, so that 
\begin{displaymath}
\tilde{\mathcal S} = \lbrace \Omega \subset D \, | \, \Omega = \Phi(\hat \Omega) \mbox{ for some } \Phi \in \mathcal U, | \Omega| =m_0 \rbrace,
\end{displaymath}
where $0<m_0 < |D|$ is a given constant. In this case we consider the modified functional 
\begin{displaymath}
\tilde{\mathcal J}_h:\mathcal S_h \rightarrow \mathbb R, \; \; \tilde J_h(\Omega_h):= \mathcal J_h(\Omega_h)+  \mathcal A_h(\Omega_h), \mbox{ where }
\mathcal A_h(\Omega_h) = \frac{\mu_h}{2} \bigl( | \Omega_h | -m_0 \bigr)^2, 
\end{displaymath}
and $(\mu_h)_{0 < h \leq h_0}$ is a sequence of real numbers satisfying $\lim_{h \searrow 0} \mu_h = \infty$. It is not difficult 
to verify that the results of Section \ref{sec:convergenceOfAlgorithm} still hold for $\tilde{ \mathcal J_h}$. Next, suppose that  $(\Omega_h)_{0<h \leq h_0}$ is a sequence of
stationary points of $\tilde{\mathcal J_h}$ satisfying (A2). Arguing as in the proof of Theorem 4.4 one obtains a sequence $(\Omega_{h_k})_{k \in \mathbb N}$ and an open set
$\Omega \Subset D$ such that \eqref{limits} holds.  
In order to show that  $| \Omega | =m_0$ we choose an open set $U \Subset D$ such that $\Omega_{h_k} \subset U$ for all $k \in \mathbb N$ and a
function $\bar V \in C^2_0(D, \mathbb R^d)$ such that $\bar V(x)=x$ for all $x \in U$. Then,  $\bar V_k:= I_{h_k} \bar V \in
\mathcal V_{\Phi_{h_k}}$ satisfies
 $\Vert \bar V_k - \bar V \Vert_{L^\infty} \leq ch_k$, $\Vert D \bar V_k \Vert_{L^\infty} \leq c$ as well as $ \bar V_k(x) = x$ on $\Omega_{h_k}$. Since $\tilde{\mathcal J}_{h_k}'(\Omega_{h_k})[\bar V_k]=0$ 
and $\Div \bar V_k = d$ on $\Omega_{h_k}$ we obtain with the help of \eqref{discsd}, our assumptions on $j$ and  \eqref{discapriori} that
\begin{displaymath}
 \mu_{h_k} d   | \Omega_{h_k} |   \, \big|  \, | \Omega_{h_k}| - m_0 \big|  =  \big|  \mu_{h_k} (|  \Omega_{h_k}| - m_0)  \int_{\Omega_{h_k}} {\Div} \bar V_k \dx \big| = |  \mathcal A_h'(\Omega_{h_k})[\bar V_k]|  =| - \mathcal J_h'(\Omega_{h_k})[\bar V_k]  | \leq c
 \end{displaymath}
 for all $k \in \mathbb N$.
 Observing that $\mu_{h_k} \rightarrow \infty$ and $| \Omega_{h_k} | \rightarrow | \Omega | = | \Phi(\Omref)| >0$ in view of \eqref{indicator}, we deduce that $|\Omega| =m_0$ so that $\Omega \in \tilde{\mathcal S}$. Furthermore, $\Omega$ can be shown to be a stationary
 point of $\mathcal J$ in the sense that $\mathcal J'(\Omega)[V]=0$ for all $V \in W^{1,\infty}_0(D)$ with $\int_{\Omega} {\Div}V \dx =0$.
\end{remark}

\section{Numerical experiments}\label{sec:experiments}
The numerical experiments we provide here show experimental evidence of the convergence we prove, alongside observing any possible rates of convergence.
For the implementation of finite element methods, we will utilise DUNE \cite{duneReference}, particularly the python bindings \cite{DunePython1,DunePython2}.
The initial grid is constructed with \texttt{pygmsh}, \cite{pygmsh}.

Notice that the Hausdorff complementary metric requires the distance function for our provided shape, this is not so trivial to construct; as such, we make use of the construction in \cite{DecHerHin23} as an approximation.
Let $\{x_i\}_{i=1}^N$ be the vertices of the triangulation of $\mathcal{T}_h$ and let $\{y_i\}_{i=1}^n$ be the vertices which lie on the boundary of $\Omega_h$, then we set
\begin{equation}
    d^h_{\complement \Omega_h}(x_i) = \min_{j =1,...,n} |x_i-y_j| \mbox{ if } x_i \in \Omega_h, \mbox{ otherwise } 0.
\end{equation}
We then calculate our discrete Hausdorff complementary distance to be given by
\begin{equation}\label{eq:DHCD}
 \rho_h(\Omega_h,\Omega^*) = \max_{ i =1,\ldots,N } | d_{\complement \Omega^*}(x_i) - d^h_{\complement \Omega_h}(x_i)|.
\end{equation}
In the experiments provided, we will consider shape optimisation problems with known, simple, minimisers.
In particular we will know the explicit form of the complementary distance function $d_{\complement \Omega^*}$.
This allows us to measure the quantities of interest and compare.

We will also measure the (maximum) radius ratio of the initial and final grids, this appears in \cite{IglSturWec18}, for example.
On each cell, this quantity is closely related to the left hand side of \eqref{regular}.
The radius ratio $\sigma$ of a triangle $T$ is given by
\begin{equation}
    \sigma (T) := \frac{r_{T}}{2 \rho_{T}},
\end{equation}
where $r_{T}$ is the radius of the smallest ball which contains $T$ and $\rho_{T}$ is the radius of the largest ball contained in $T$.
It holds that $\sigma\geq 1$ and $\sigma(\hat{T}) = 1$ if and only if $\hat{T}$ is equilateral.
For the initial grid, we will measure $\tilde{\sigma}_0 := \max_{\hat T \in \mathcal{\hat T}_h} \sigma(\hat T)$ and on the final grid, defined by $\Phi_h$, $\tilde{\sigma}_f := \max_{ \hat{T} \in \mathcal{\hat{T}}_h} \sigma(\Phi_h(\hat T))$.

\subsection{Direction of steepest descent construction}
Throughout this work, we have made use of $V_h$, a direction of steepest descent.
While it is known that such a direction exists, by compactness in a finite dimensional space, the construction of it is not necessarily trivial.
As in \cite{DecHerHin23}, we will make use of the Alternating Direction Method of Multipliers (ADMM) approach to approximate a solution.
For a given $\Omega_h := \Phi_h(\Omref)$, let
\begin{equation}
    \mathcal{Q}_{\Phi_h} := \{ q_h \in L^2(D;\R^{d \times d}) : q_h|_T \in P_0(T;\R^{d\times d}),\, T = \Phi_h(\hat{T}),\, \hat{T} \in \hat {\mathcal{T}}_h\}
\end{equation}
be the space of piecewise constant $d\times d$ matrix valued finite elements subordinate to the triangulation induced by $\Phi_h$.
In addition for given $\tau>0$, we consider the Lagrangian $\mathcal{L}_\tau \colon \mathcal{V}_{\Phi_h}\times \mathcal{Q}_{\Phi_h} \times \mathcal{Q}_{\Phi_h} \to \R$ given by:
\begin{equation}
    \mathcal{L}_\tau (V_h,q_h;\lambda_h) := \int_D \left( \lambda_h : (DV_h -q) + \frac{\tau}{2}(DV_h-q_h):(DV_h-q_h)\right) \dx + \mathcal{J}_h'(\Omega_h)[V_h].
\end{equation}
Given $V_h^0 \in \mathcal{V}_{\Phi_h}$, $\lambda_h^0 \in \mathcal{Q}_{\Phi_h}$, and $tol >0$, the  algorithm is then given by
\begin{alg}[ADMM]\label{alg:ADMM}
\ \\ [2mm]
0. Let $R = \infty$ \\[2mm]
For $k = 0,1,2,\ldots$: \\[2mm]
1. If $R < tol$, then stop. \\[2mm]
2. Find $q_h^{k+1} = \argmin \{ \mathcal{L}_\tau (V_h^k,q_h;\lambda_h^k) : q_h \in \mathcal{Q}_{\Phi_h},\, |q_h| \leq 1 \} $. \\[2mm]
3. Find $V_h^{k+1} = \argmin \{ \mathcal{L}_\tau (V_h,q_h^{k+1};\lambda_h^k) : V_h \in \mathcal{V}_{\Phi_h} \} $. \\[2mm]
4. Set $\lambda_h^{k+1} = \lambda_h^{k} + \tau (DV_h^{k+1} - q_h^{k+1})$. \\[2mm]
5. Update $R = \left( \|\lambda_h^{k+1} - \lambda_h^{k}\|_{L^2(D;\R^{d\times d})}^2 + \| DV_h^{k+1} - DV_h^{k}\|_{L^2(D;\R^{d\times d})}^2 \right)^{\frac{1}{2}}$.
\end{alg}
There exist variants in which one may adapt $\tau$ to reduce the number of steps required to achieve a given tolerance, see \cite{BarMil20}.
Such an adaptive variant is used in the numerical experiments.

\subsection{Experiments}
For the numerical experiments presented, we will consider a cascading approach.
In the experiments, we run the described algorithm for up to 15 steps, or until the Armijo step length satisfies $t_k\leq 2^{-11}$, perform a congruent refinement, and start the algorithm with $\Phi_{h/2}^0 = \Phi_h^{k^*}$, where $k^* = \min(15, \inf\{ k \in \mathbb{N} : t_{k-1} \leq 2^{-11}\})$.
For the third experiment, we wish to measure some form of convergence, as such it is reasonable to continue to an appropriate convergence criteria, rather than stopping at some ad-hoc number of steps.
We will again consider the mesh converged when the Armijo step length satisfies $t_k \leq 2^{-11}$.
However, the mesh will be saved at shape $k^*$ as described above, and for the refinement, continue with $\Phi_{h/2}^0 = \Phi_h^{k^*}$.
{We expect this cascading approach to be useful for the efficient calculation of optimal shapes.}

We will fix the domain $D = (-2,2)^2$.

\subsubsection{Experiment 1}\label{sec:exp1}
For this experiment, we consider $j(\cdot,u,z):= \frac{1}{2} (u-u_d)^2$, where we choose $u_d(x) = \frac{4}{\pi}-|x|^2$, for the data for the Poisson problem, we take $f = 1$.
A locally optimal shape is expected to be $\Omega^* := B(0,\frac{4}{\sqrt{3\pi}})$, the ball of radius $\frac{4}{\sqrt{3\pi}}$ at the origin, which has energy $\mathcal{J}(\Omega) = \frac{128}{27 \pi^2}$.
{This may be found by assuming symmetry i.e., the solution is a ball of radius $r>0$.
One may then find the above critical radius and energy using calculus in one dimension.}
We choose $\Omref = (-1,1)^2$.
The initial mesh is displayed on the left of Figure \ref{fig:exp1:domains}, with the hold all in blue, and the initial domain in red.

In Figure \ref{fig:exp1:energy}, we see the energy and the discrete Hausdorff complementary distance \eqref{eq:DHCD} for the experiment along the shape iterates.
\begin{figure}
    \centering
    \includegraphics[width = .45\linewidth]{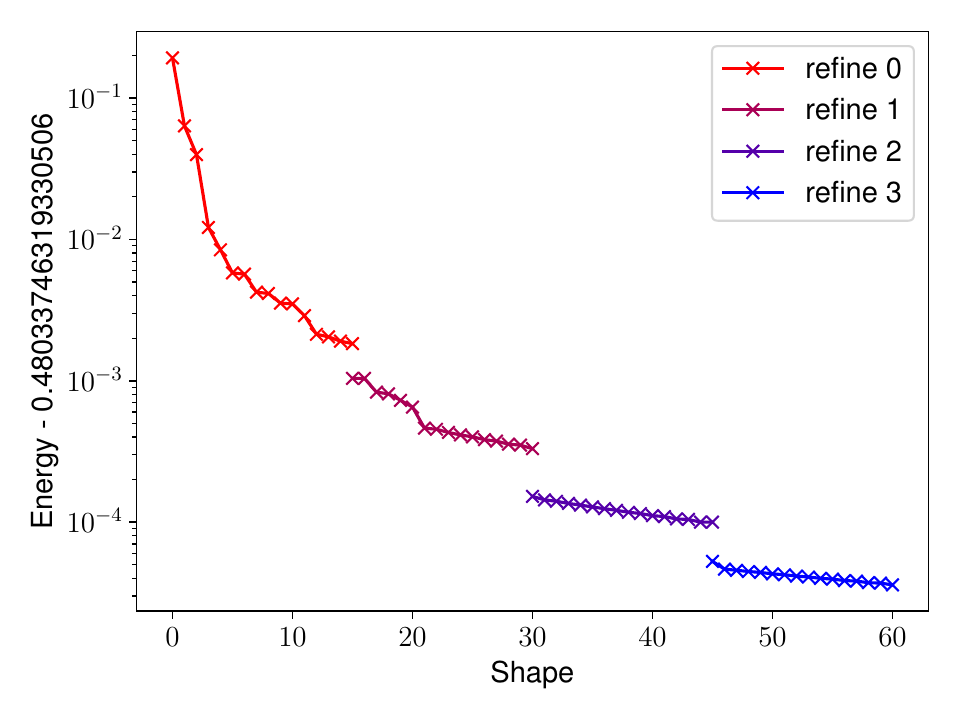}
    \hfill
    \includegraphics[width = .45\linewidth]{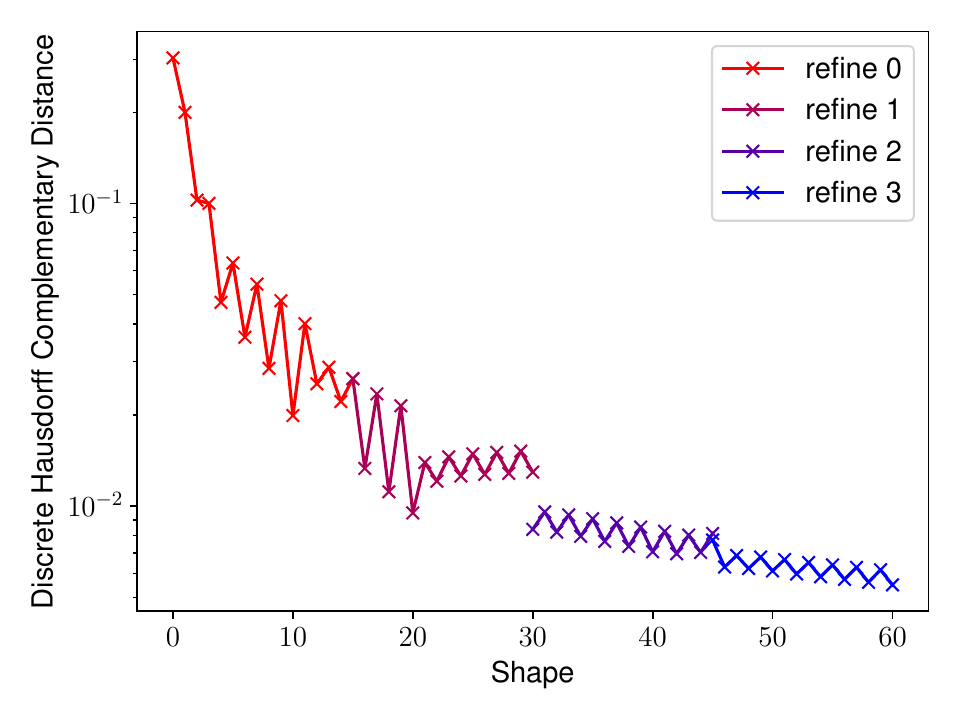}
    \caption{On the left is the energy and on the right discrete Hausdorff complementary distance along the shape iterates for the experiment in Section \ref{sec:exp1}.
        We see that the energy is reducing along the shapes, jumping to a lower energy when the mesh is refined and the distance decreases on average.}
    \label{fig:exp1:energy}
\end{figure}
For convergence of our scheme, in Theorem \ref{conv3}, we require that $\|D\Phi_h\|_{L^\infty}$ and $\|D\Phi_h^{-1}\|_{L^\infty}$ are bounded; the value of these along the iterations is found in Figure \ref{fig:exp1:DPhi}
\begin{figure}
    \centering
    \includegraphics[width = .45\linewidth]{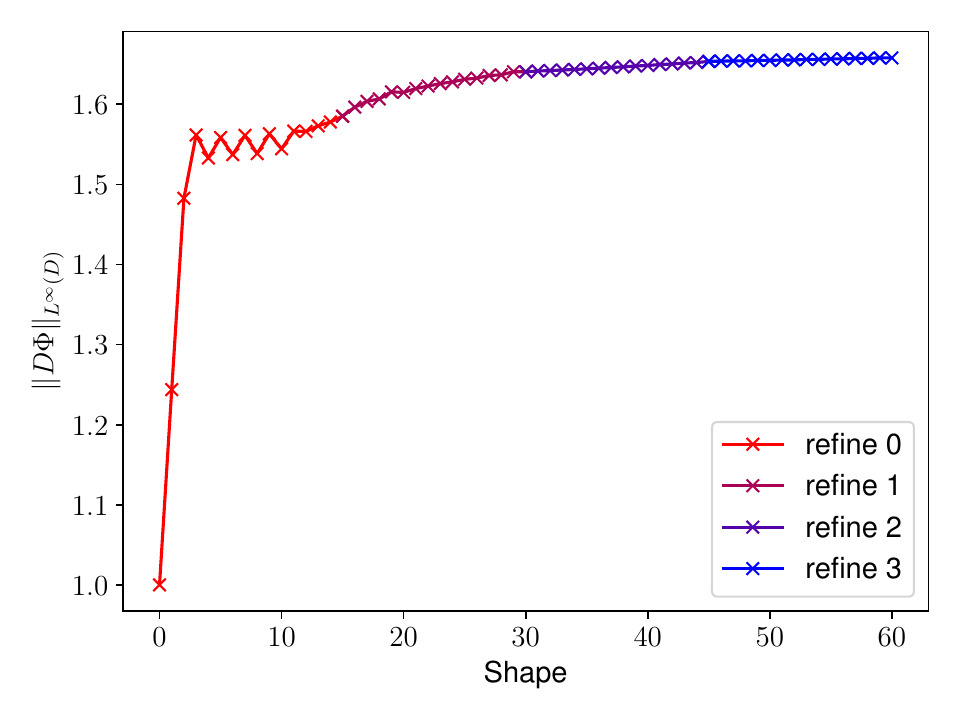}
    \hfill
    \includegraphics[width= .45\linewidth]{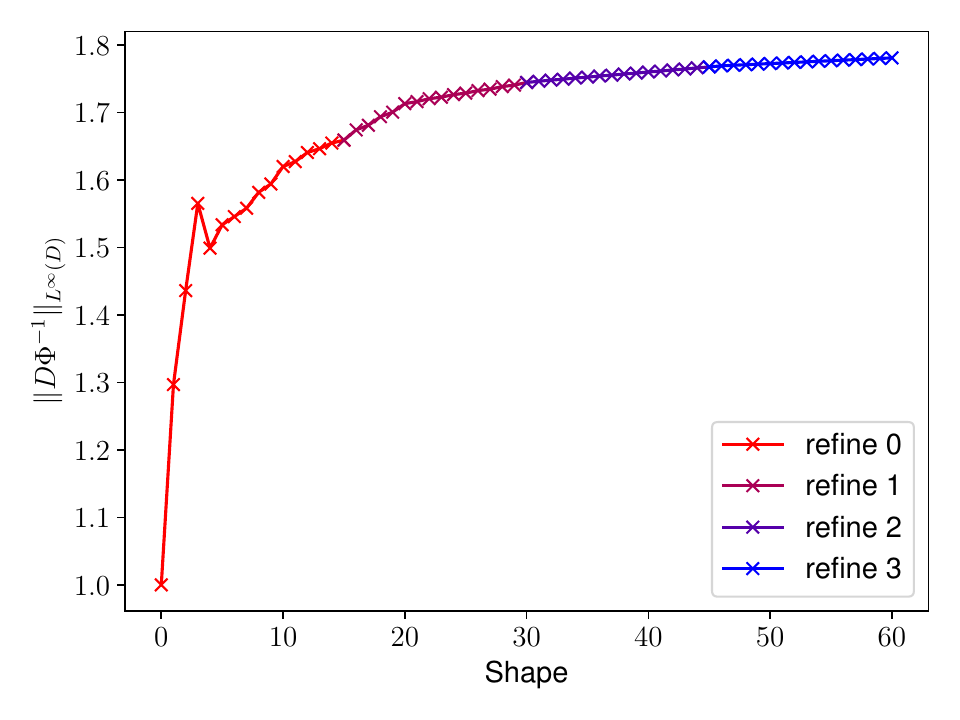}
    \caption{Values for $\|D\Phi_h\|_{L^\infty}$ (left) and $\|D\Phi_h^{-1}\|_{L^\infty}$ (right) along the shape iterates for the experiment in Section \ref{sec:exp1}.
    We see that the values of $\|D\Phi_h\|_{L^\infty}$ and $\|D\Phi_h^{-1}\|_{L^\infty}$ abruptly jump up at the start, and only slowly increase later on.}
    \label{fig:exp1:DPhi}
\end{figure}
The final domains are given on the right of Figure \ref{fig:exp1:domains}.
    
\begin{figure}
    \centering
    \begin{minipage}{.49\linewidth}
        \includegraphics[width = .9 \linewidth]{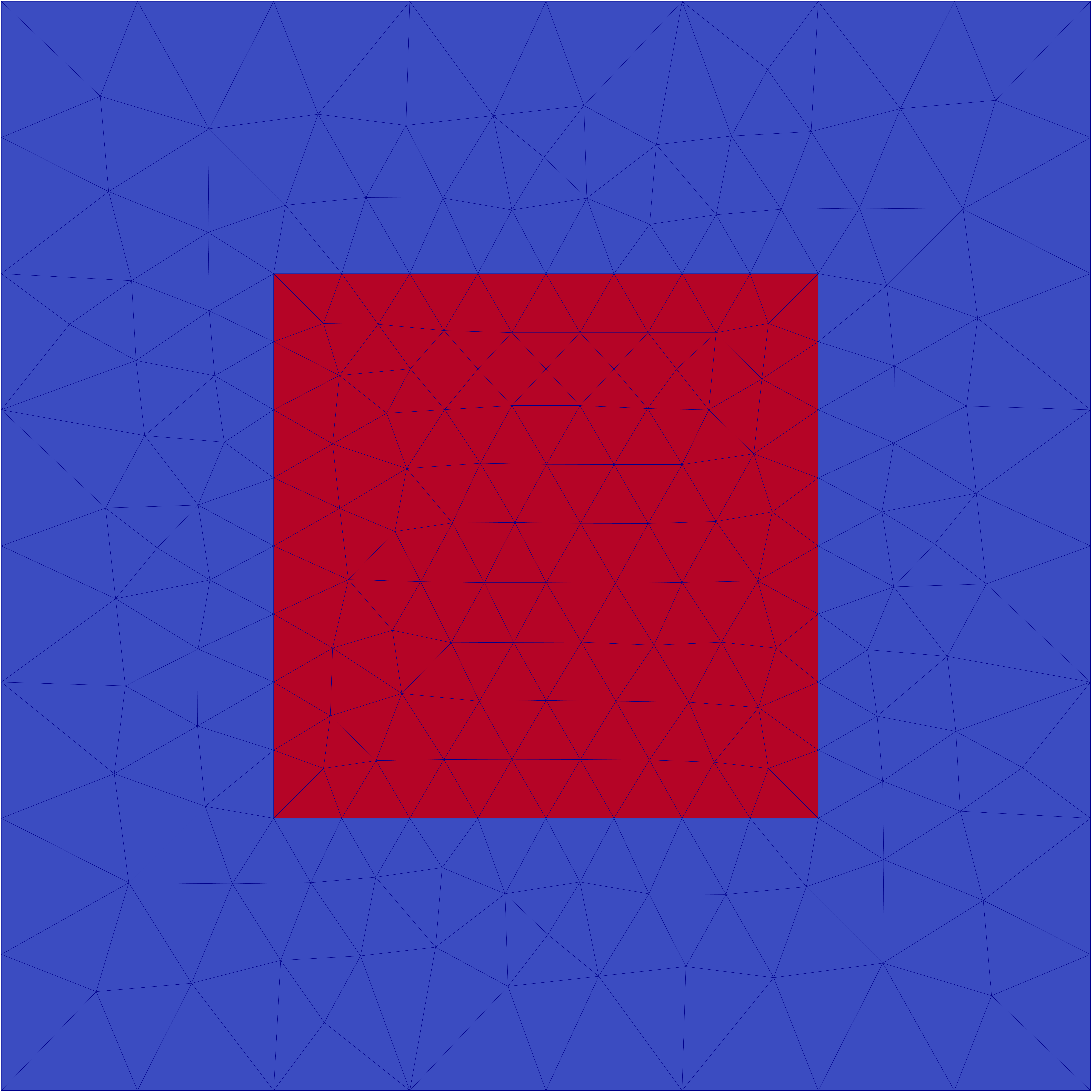}
    \end{minipage}
    \begin{minipage}{.49\linewidth}
        \newlength{\imagewidth}
        \settowidth{\imagewidth}{\includegraphics{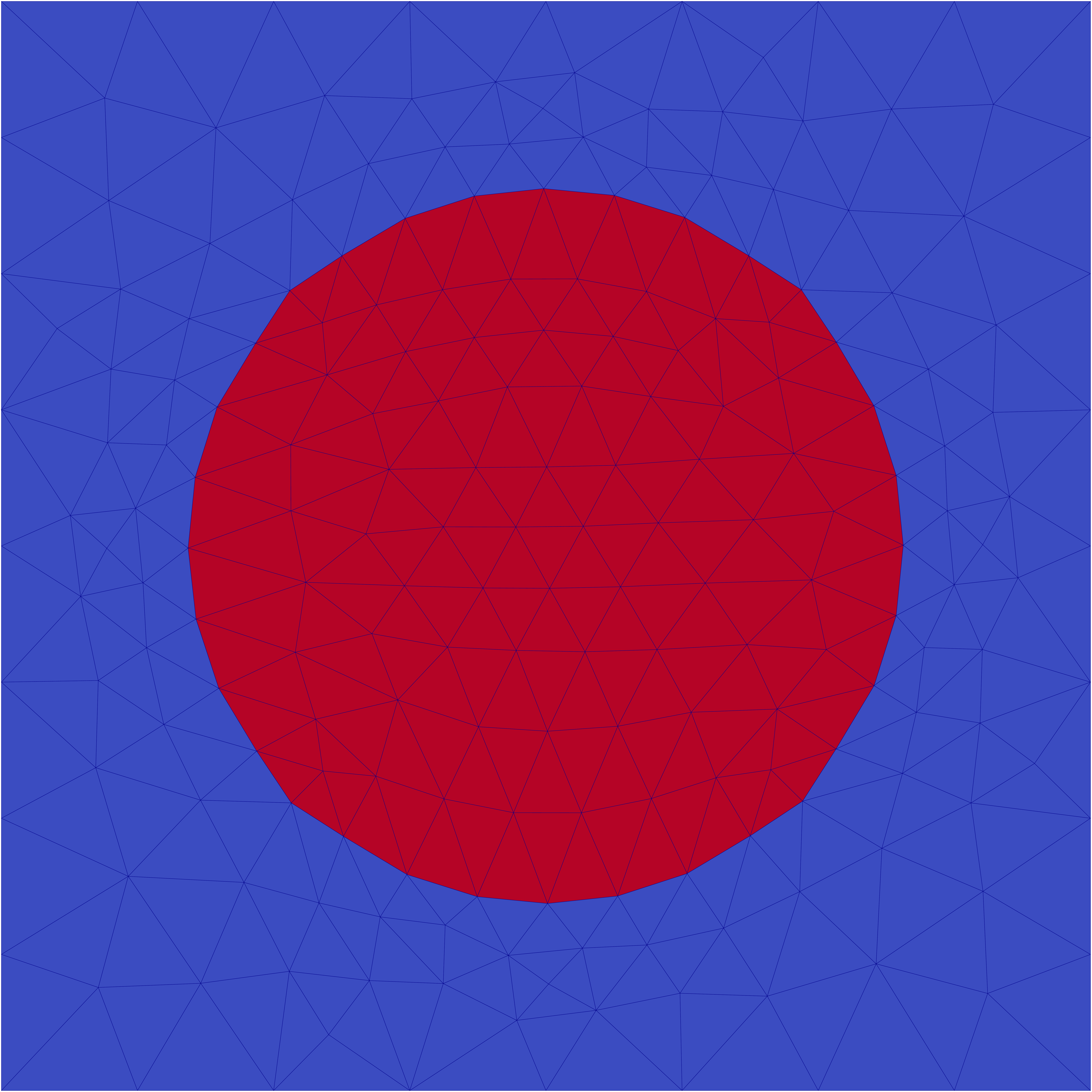}}
        \includegraphics[trim = 0 .5\imagewidth{} .5\imagewidth{} 0, clip,width = .45\linewidth]{experiment_1_refine=0_final_red_blue.pdf}
        \includegraphics[trim = .5\imagewidth{} .5\imagewidth{} 0 0, clip,width = .45\linewidth]{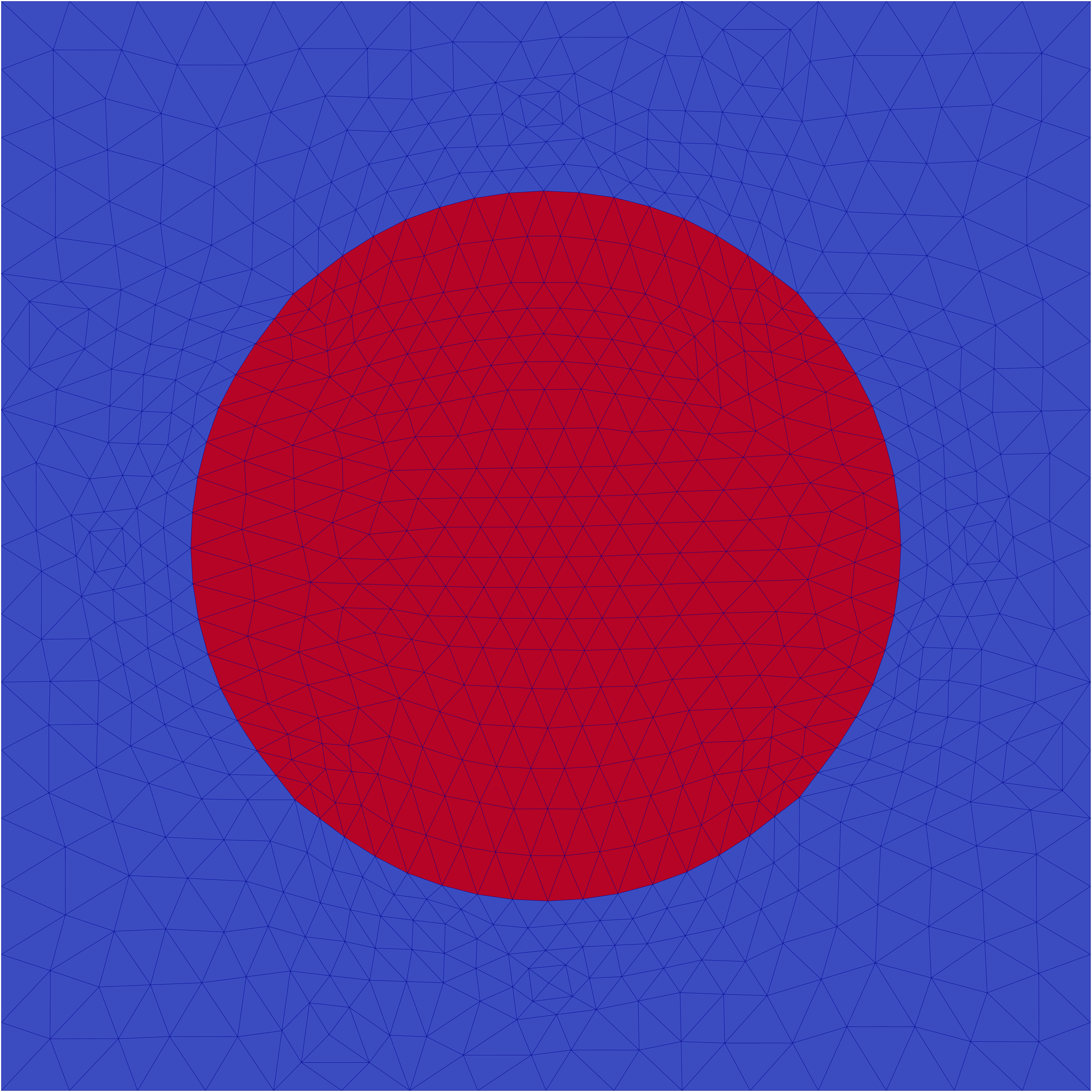}
    
    \noindent
        \includegraphics[trim = 0 0 .5\imagewidth{} .5\imagewidth{}, clip,width = .45\linewidth]{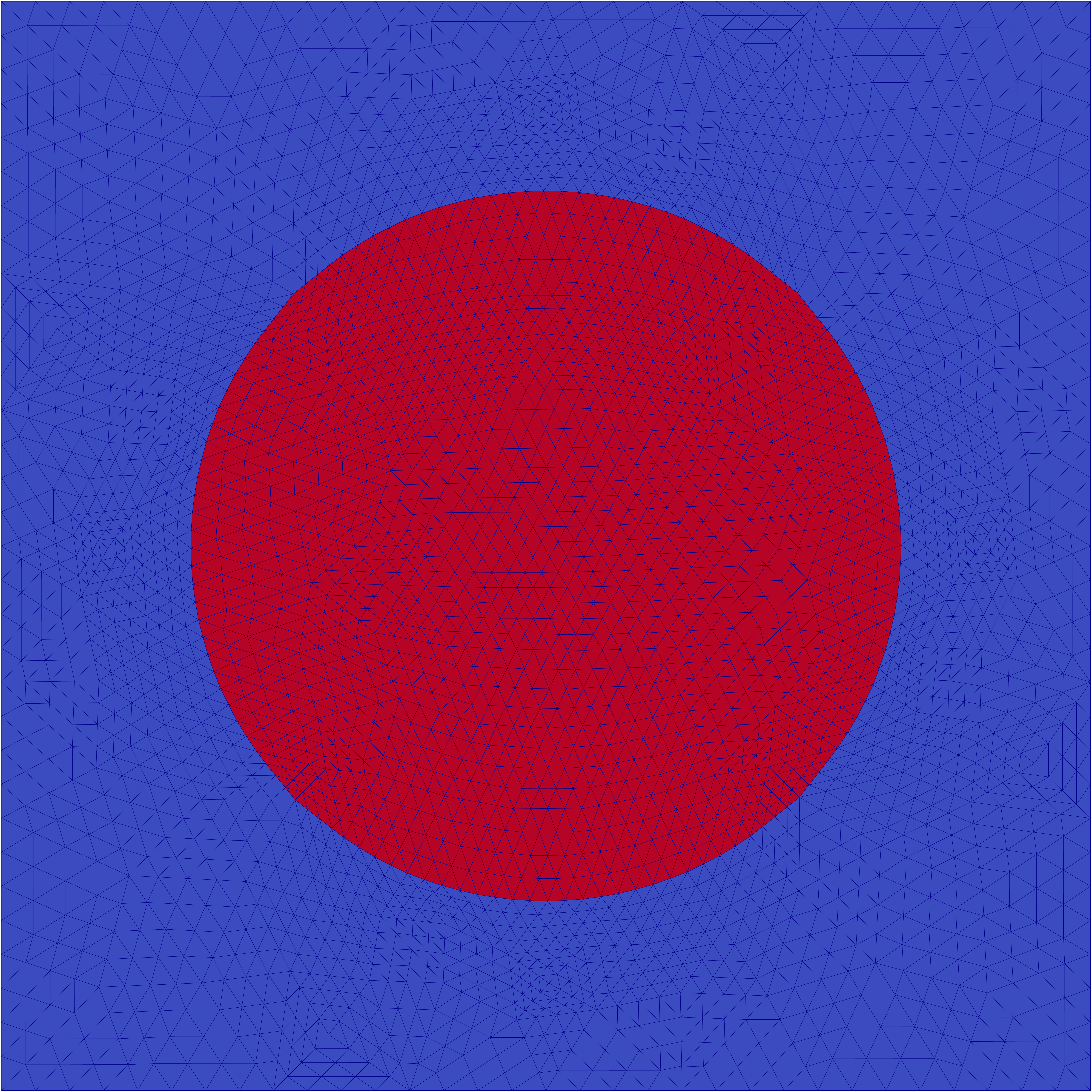}
        \includegraphics[trim = .5\imagewidth{} 0 0 .5\imagewidth{}, clip,width = .45\linewidth]
        {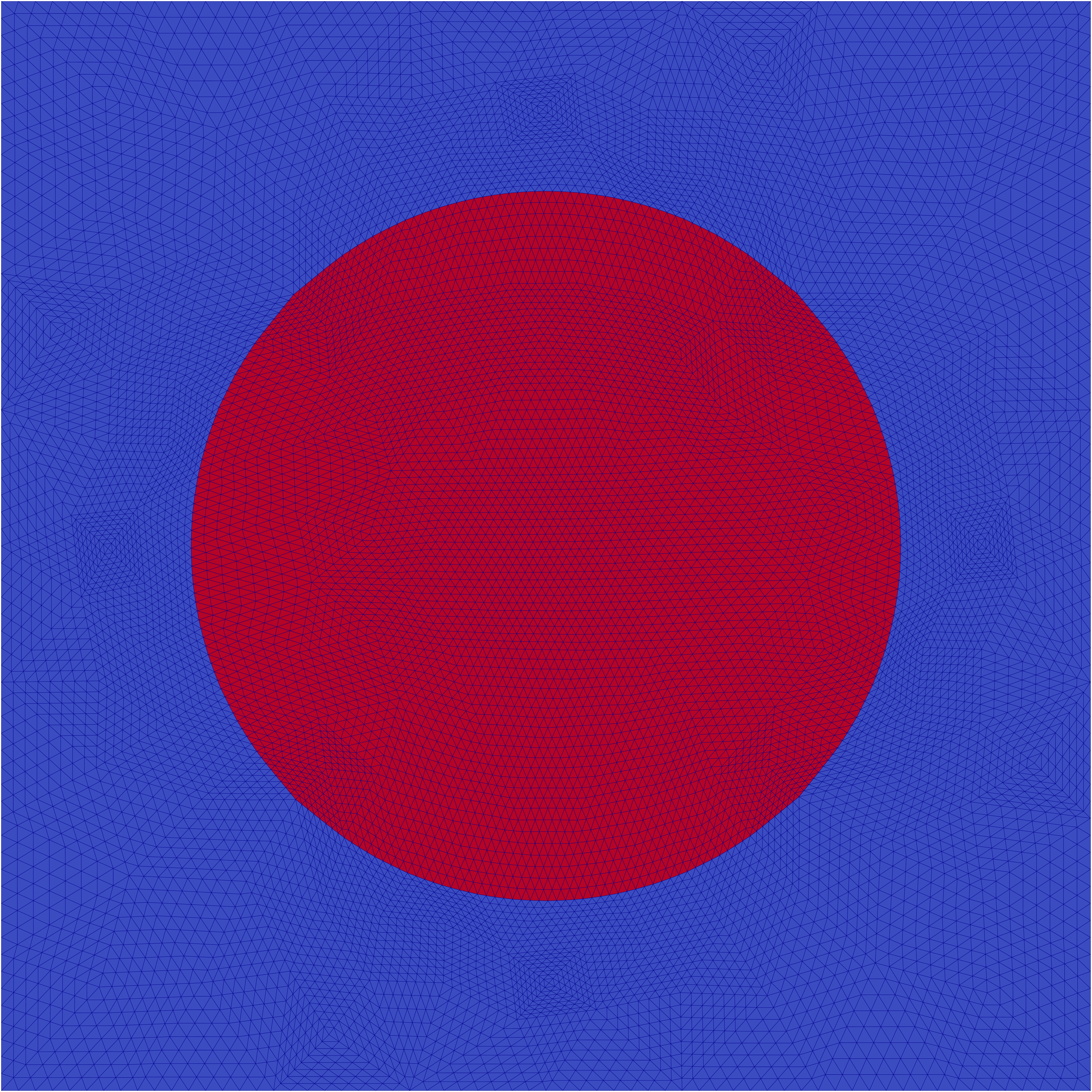}
    \end{minipage}
    \caption{Final domains for the experiment in Section \ref{sec:exp1}, refinements increasing from top left to bottom right.
    Taking the maximum over all triangles, the radius ratio for the initial grid is $\tilde{\sigma}_0\approx 1.633917$ and for the final, most fine, grid is $\tilde{\sigma}_f \approx 1.651944$.
    }
    \label{fig:exp1:domains}
\end{figure}

\subsubsection{Experiment 2}\label{sec:exp2}
For this experiment, we consider $j(\cdot,u,z):= \frac{1}{2} (u-u_d)^2$, where we choose
\begin{equation*}
    u_d(x) = 5 \frac{ -\pi |x|^2\ln(4) + 3 \ln( |x|^2) + 3 \ln(\pi) + \ln(4) }{\pi \ln(256)},
\end{equation*}
for the data for the Poisson problem, we take $f = 5$.
Notice that $u_d$ has zeros on $|x| = \frac{1}{\sqrt{\pi}},\, \frac{2}{\sqrt{\pi}}$ and that $-\Delta u_d = f$.

The optimal shape is expected to be $\Omega^* := B(0,\frac{2}{\sqrt{\pi}}) \setminus \overline{B(0,\frac{1}{\sqrt{\pi}})}$, with $\mathcal{J}(\Omega^*) = 0$.
{As in the first experiment, this is again calculated using axi-symmetric arguments.}
Notice that this is not a simply connected optimal shape, which may require some topology optimisation.
We choose the initial domain given by an approximation of $\Omref = B(0,1.4) \setminus \overline{B(0,0.7)}$.
{Let us note that, without prior knowledge of the topology of the domain, e.g.~starting with a ball of radius $1$, the domains heads towards a non-axi-symmetric shape, which may possibly end up in a degenerate minimiser.
A combined shape and topology optimisation may prove useful in such a setting.
The development of this is work in preparation.}

The initial mesh is displayed on the left of Figure \ref{fig:exp2:domains}, with the hold all in blue, and the initial domain in red.

In Figure \ref{fig:exp2:energy}, we see the energy and the discrete Hausdorff complementary distance \eqref{eq:DHCD} for the experiment along the iterations.
\begin{figure}
    \centering
    \includegraphics[width = .45\linewidth]{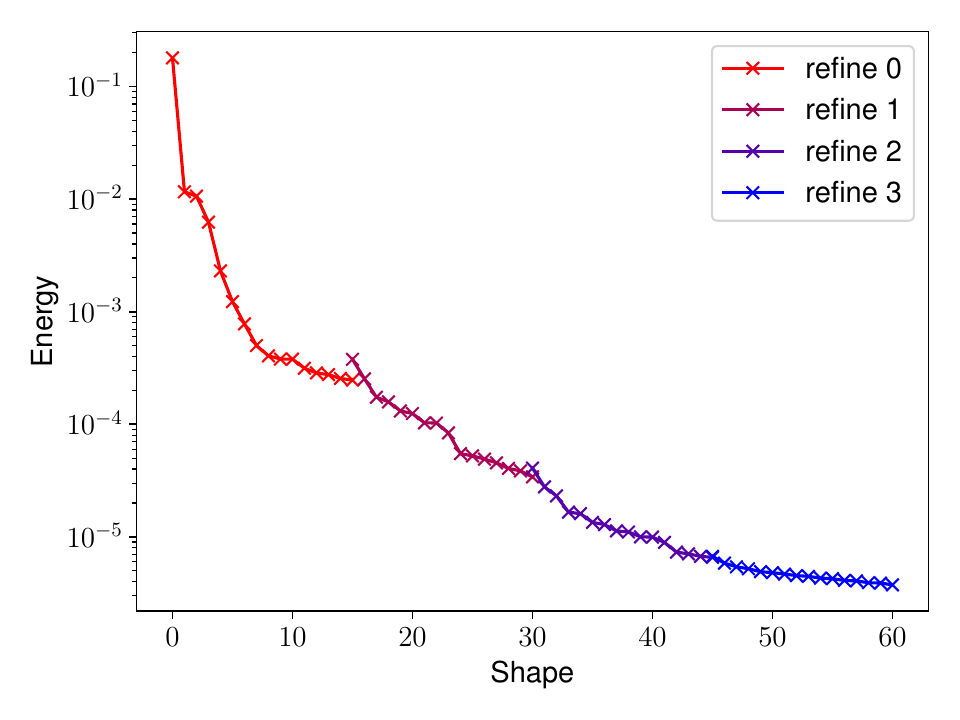}
    \hfill
    \includegraphics[width = .45\linewidth]{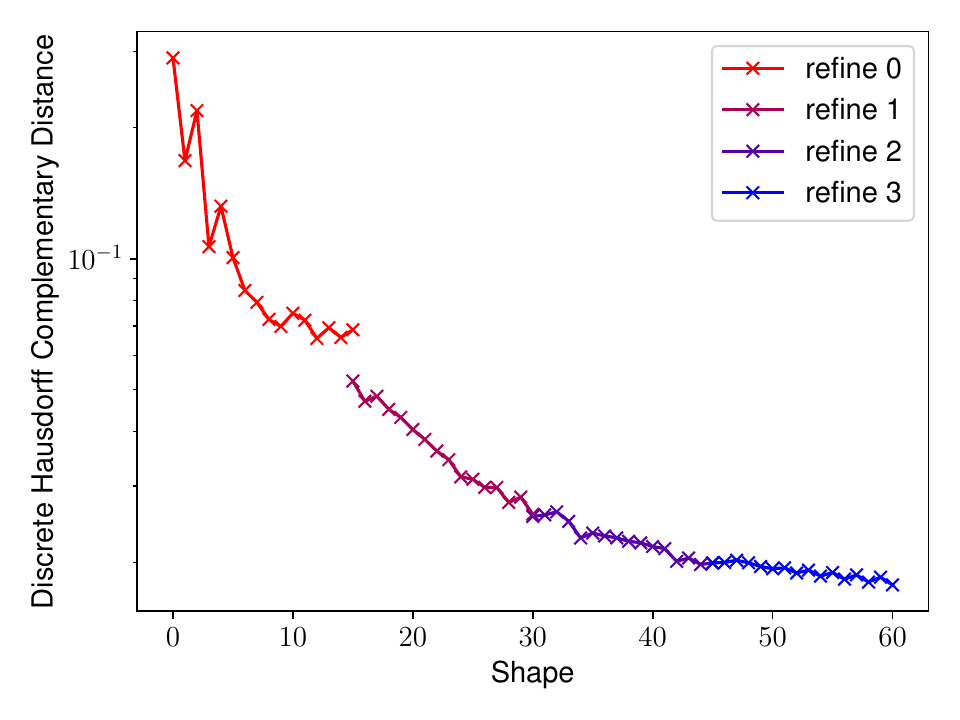}
    \caption{On the left is the energy and on the right discrete Hausdorff complementary distance along the shape iterates for the experiment in Section \ref{sec:exp2}.
        We see that the energy is decreasing, jumping up as the mesh is refined.
        However, the energy recovers only a few steps later to a value smaller than the coarser mesh.
        The distance is reducing on average along the iterations, rarely increasing}
    \label{fig:exp2:energy}
\end{figure}
The value of $\|D\Phi_h\|_{L^\infty}$ and $\|D\Phi_h^{-1}\|_{L^\infty}$ along the iterations is found in Figure \ref{fig:exp2:DPhi}.
\begin{figure}
    \centering
    \includegraphics[width = .45\linewidth]{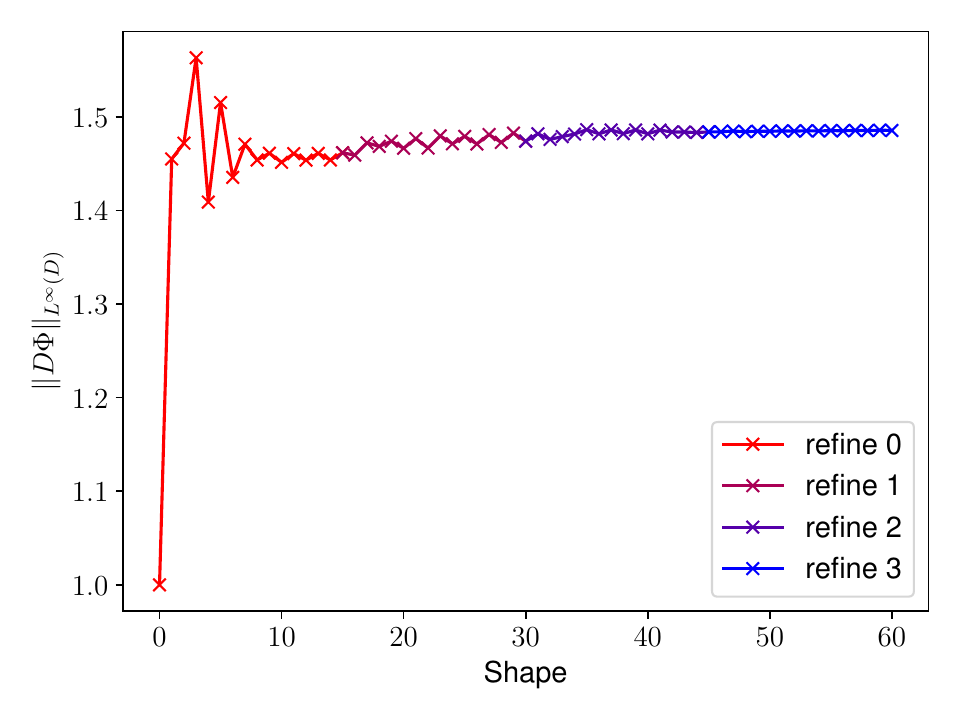}
    \hfill
    \includegraphics[width= .45\linewidth]{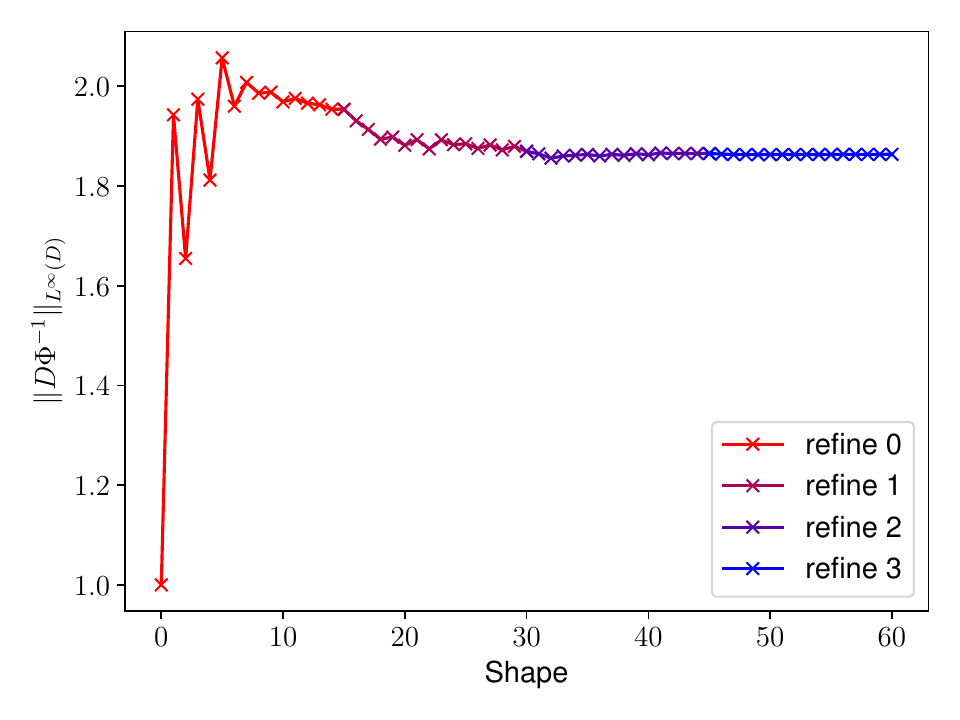}
    \caption{Values for $\|D\Phi_h\|_{L^\infty}$ (left) and $\|D\Phi_h^{-1}\|_{L^\infty}$ (right) along the shape iterates for the experiment in Section \ref{sec:exp2}.
    We see again that the values of $\|D\Phi_h\|_{L^\infty}$ and $\|D\Phi_h^{-1}\|_{L^\infty}$ abruptly jump up at the start.
    Later on, $\|D\Phi_h^{-1}\|_{L^\infty}$ appears to plateau and $\|D\Phi_h\|_{L^\infty}$ increases only very slightly.}
    \label{fig:exp2:DPhi}
\end{figure}
The final domains are given on the right of Figure \ref{fig:exp2:domains}.

\begin{figure}
    \centering
    \begin{minipage}{.49\linewidth}
        \includegraphics[width = .9 \linewidth]{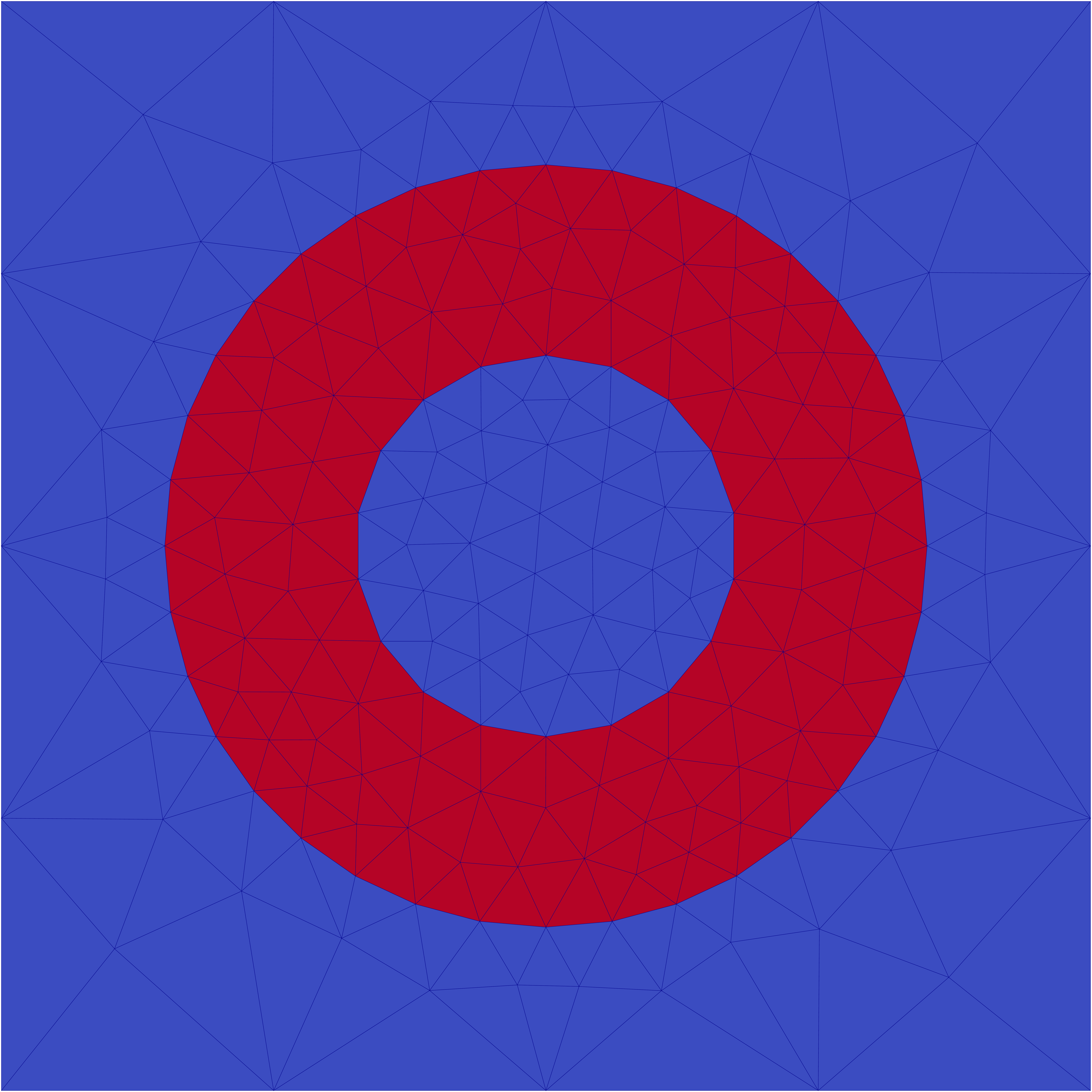}
    \end{minipage}
    \begin{minipage}{.49\linewidth}
        \settowidth{\imagewidth}{\includegraphics{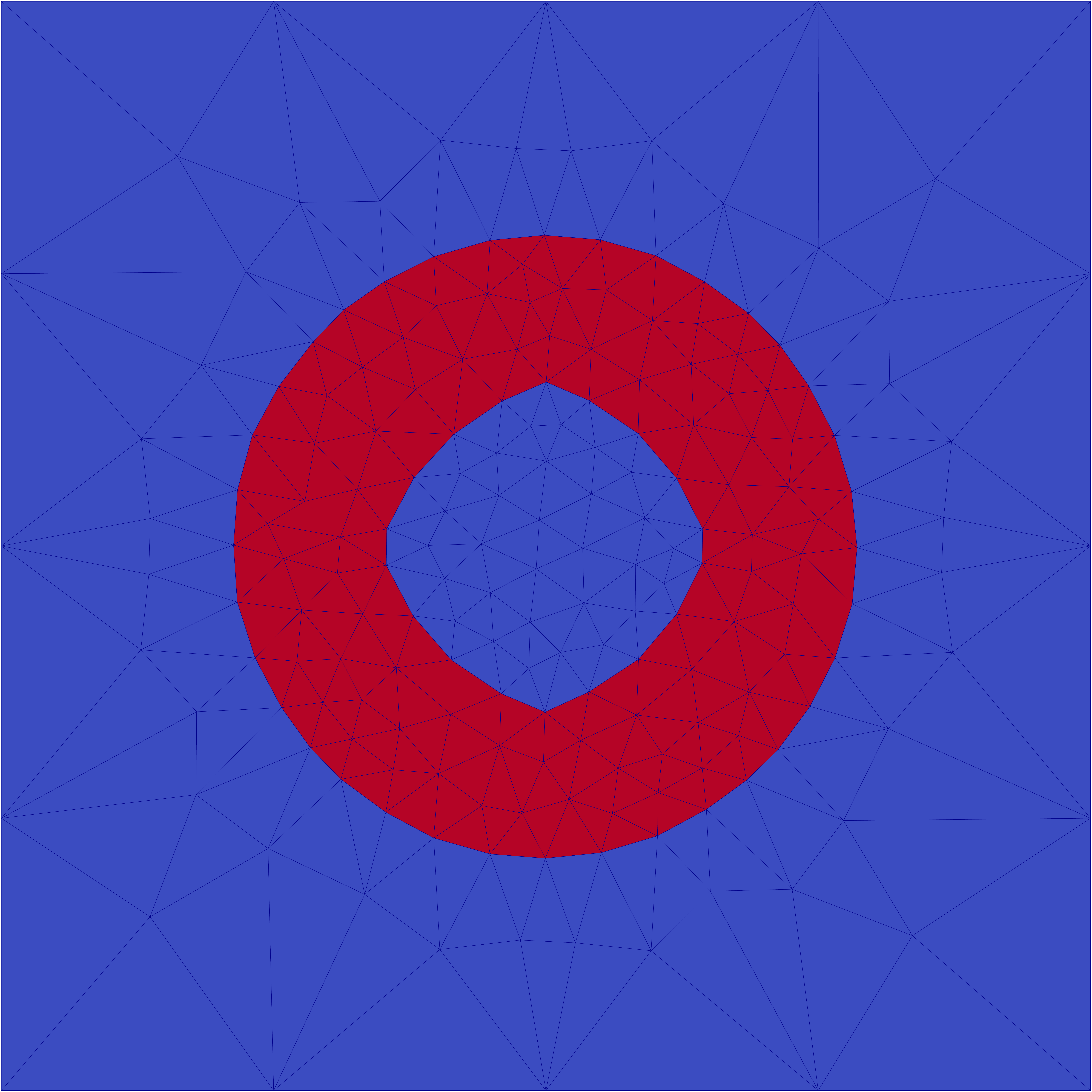}}
        \includegraphics[trim = 0 .5\imagewidth{} .5\imagewidth{} 0, clip,width = .45\linewidth]{experiment_2_refine=0_final_red_blue.pdf}
        \includegraphics[trim = .5\imagewidth{} .5\imagewidth{} 0 0, clip,width = .45\linewidth]{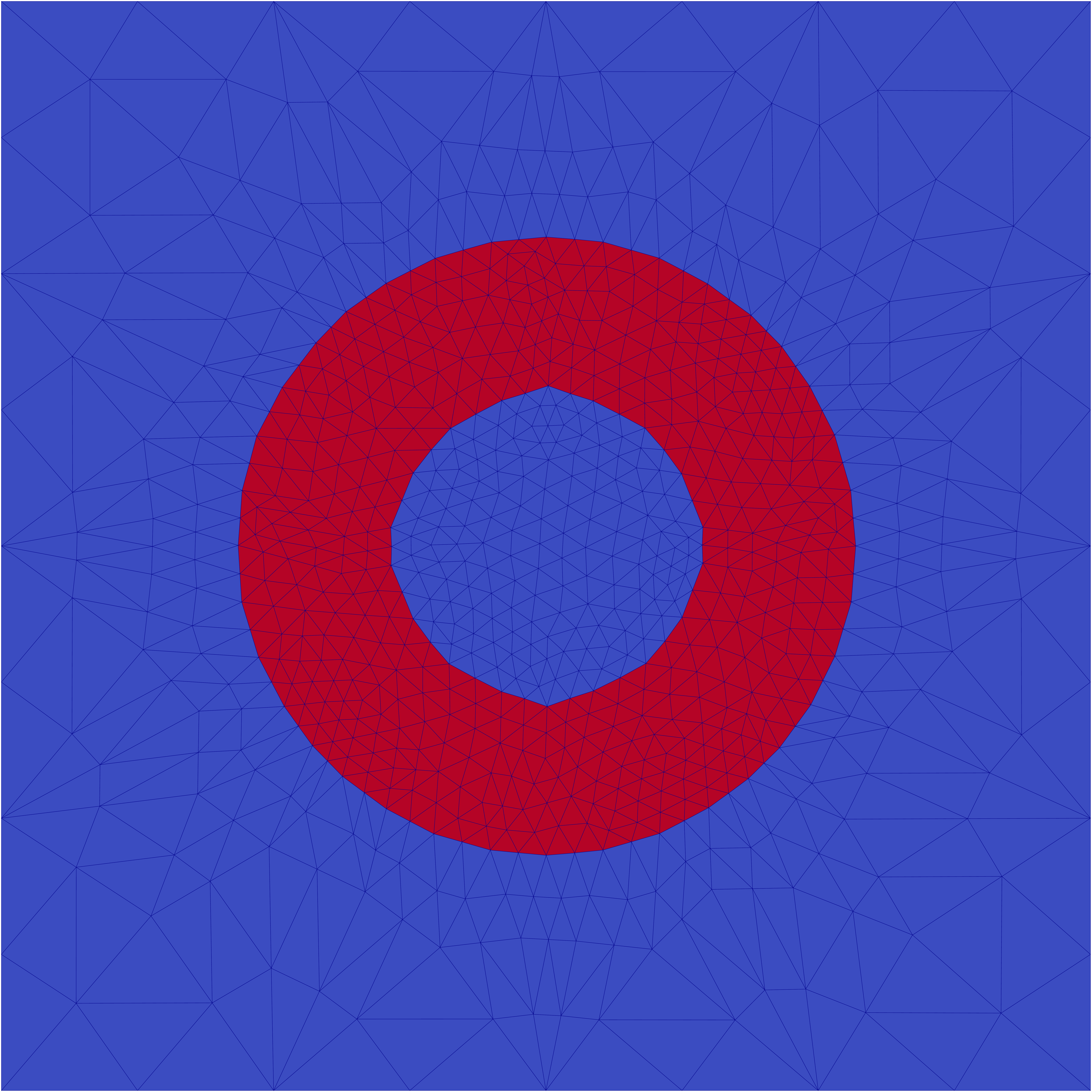}
    
    \noindent
        \includegraphics[trim = 0 0 .5\imagewidth{} .5\imagewidth{}, clip,width = .45\linewidth]{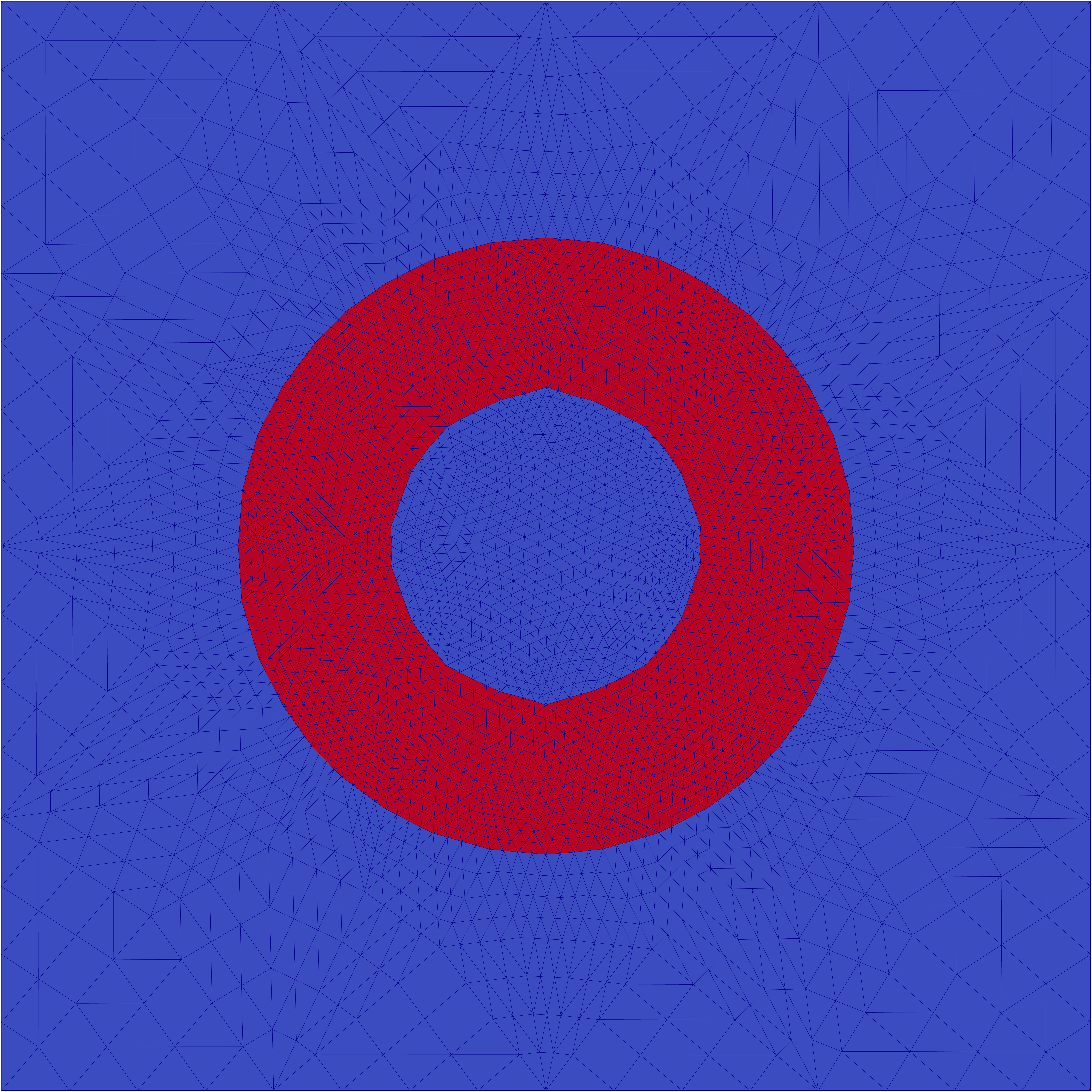}
        \includegraphics[trim = .5\imagewidth{} 0 0 .5\imagewidth{}, clip,width = .45\linewidth]
        {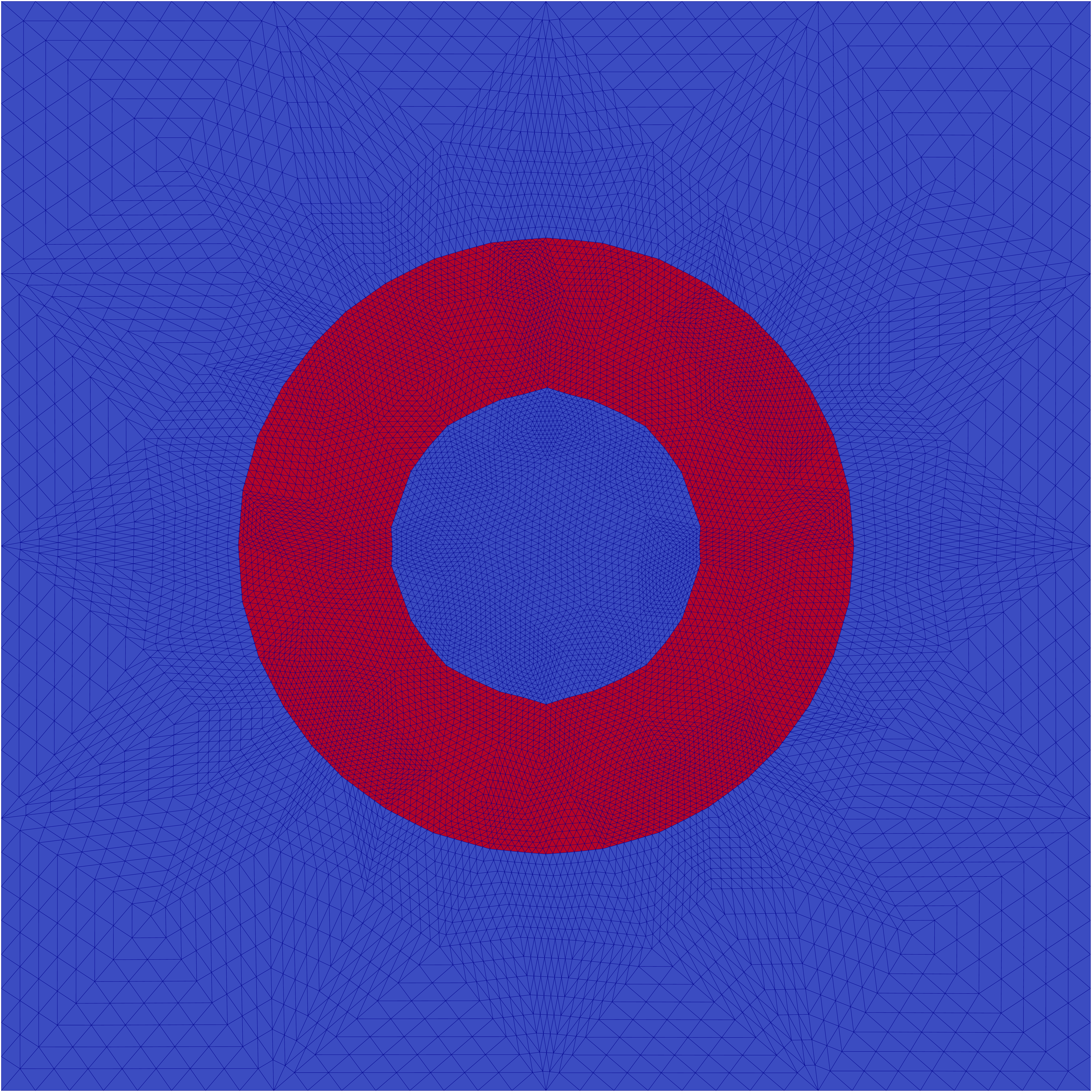}
    \end{minipage}
    \caption{On the left is the initial domain and on the right is a quarter of the final domains for the experiment in Section \ref{sec:exp2}, refinements increasing from top left to bottom right.
    Taking the maximum over all triangles, the radius ratio for the initial grid is $\tilde{\sigma}_0 \approx 1.787287$ and for the final, most fine, grid is $\tilde{\sigma}_f \approx 1.917235$.
    }
    \label{fig:exp2:domains}
\end{figure}

\subsubsection{Experiment 3}\label{sec:exp3}

For this experiment, we choose $f = 1$ and consider $j(x,u,z):= \frac{1}{2}| z + \frac{x}{2}|^2$ along with a penalty for the volume as in Remark \ref{rem:penalty}, where we set $m_0 = 4$.
Without the penalty term and for this choice of $j$, it holds that for any $r>0$, the balls $B_r(0)$ would be a minimiser with $0$ energy.
By penalising the volume to be equal to $4$, one has that the minimiser will be the ball of radius $r = \frac{2}{\sqrt{\pi}}$, with $\mathcal{J}(\Omega^*) = 0$.

As remarked at the beginning of this section, this experiment is performed slightly differently to the previous two.
While we will cascade to a finer mesh after a maximum of 15 steps, we will also continue with the shape optimisation to provide the first shape which is produced with an Armijo step of $2^{-11}$.
This allows for a fair comparison of how the energy and Hausdorff complementary distance appear when the shape is approximately stationary.
When we refine through the cascade, we will increase the penalty parameter so that it scales like $h^{-\frac{1}{2}}$.

{For this experiment, we use a symmetric grid, rather than one generated by \texttt{pygmsh}.}
The initial domain, given by $\Omref = (-1,1)^2$, appears in red on the left of Figure \ref{fig:exp3:domains}, with the hold all in blue.

In Figure \ref{fig:exp3:energy}, we see the energy and the discrete Hausdorff complementary distance \eqref{eq:DHCD} for the experiment along the iterations.
\begin{figure}
    \centering
    \includegraphics[width = .45\linewidth]{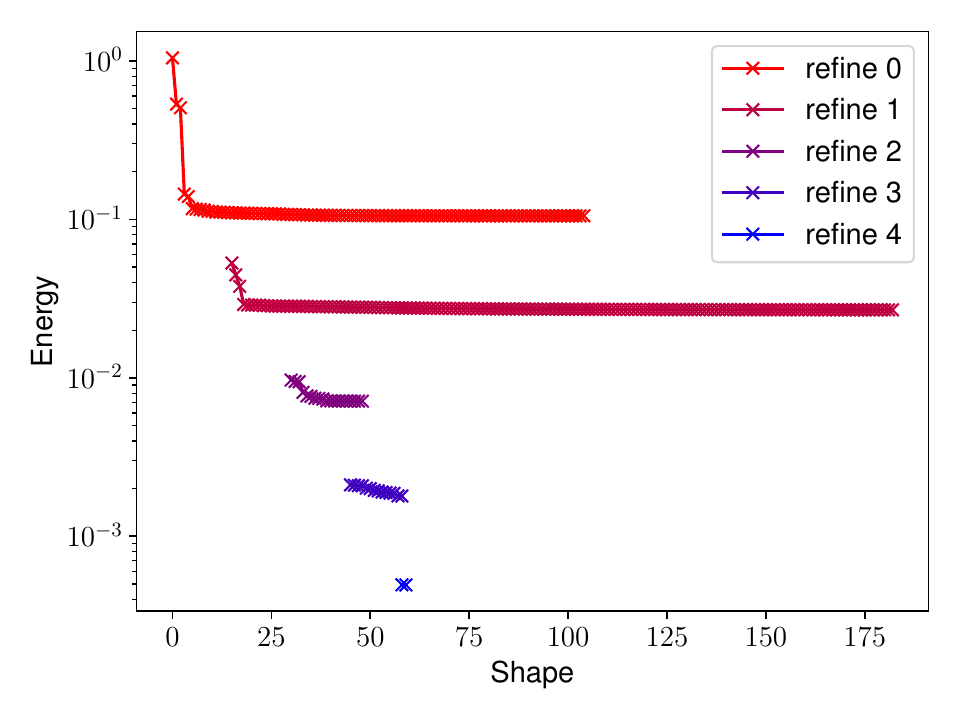}
    \hfill
    \includegraphics[width = .45\linewidth]{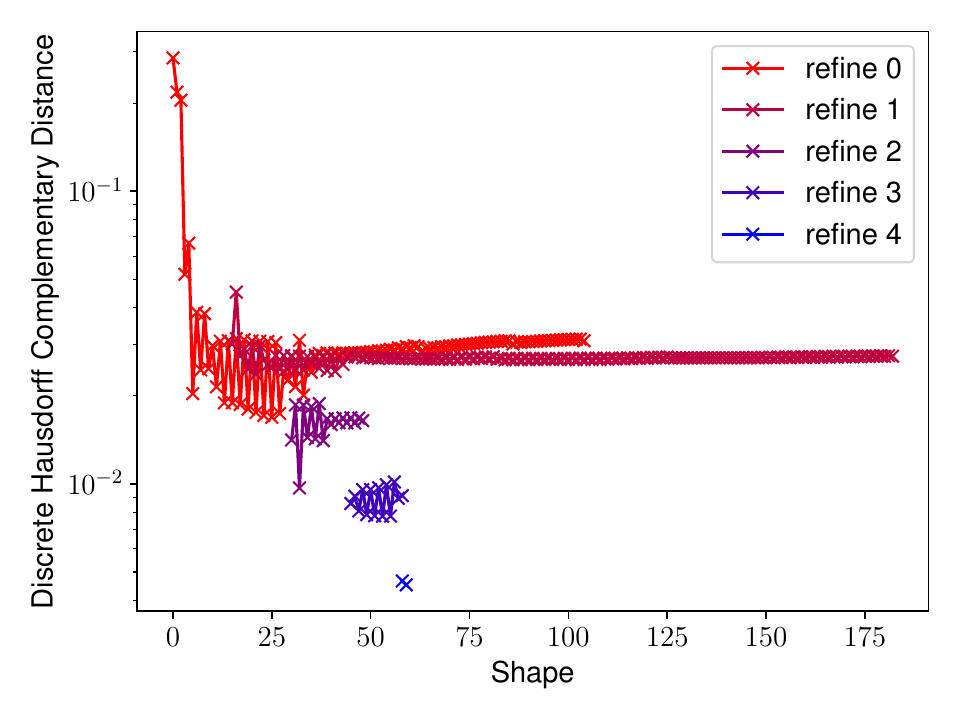}
    \caption{On the left is the energy and on the right discrete Hausdorff complementary distance along the shape iterates for the experiment in Section \ref{sec:exp3}.
            We see that the energy is reducing along the shapes, jumping to a lower energy when the mesh is refined.
            The coarser grids require many more shape updates to reach the prescribed convergence criteria than the finer grids.
            }
    \label{fig:exp3:energy}
\end{figure}
In Table \ref{tab:exp3}, we tabulate the mesh size of the reference domain for each of the approximately converged shape along with the associated energy and discrete Hausdorff complementary distance.
We also provide the \emph{experimental order of convergence}, which for a given functional $E(h)$, here depending on the size $h$ of the reference mesh, is defined by
\[
EOC := \frac{\ln{E(h_1)}-\ln{E(h_2)}}{\ln{h_1}-\ln{h_2}}.
\]
\begin{table}
    \centering
    \begin{tabular}{c|c|c|c|c|c}%
    $h$ &$\mu_h$ &Energy  &EOC Energy  &HCD &EOC HCD
    \\\hline
    0.5 &0.5    &0.105327   &-- &0.0308699  &--
    \\\hline
    0.25    &$\frac{1}{\sqrt{2}}$ &0.0268579  &1.97146    &0.0273133  &0.176597
    \\\hline
    0.125   &1  &0.00712922 &1.91353    &0.016456   &0.730993
    \\\hline
    0.0625  &$\sqrt{2}$   &0.00179752 &1.98774    &0.00912092 &0.851362
    \\ \hline
    0.03125 &2  &0.000493593    &1.86461    &0.00451768 &1.0136
    \end{tabular}
    \caption{Energy and discrete Hausdorff complementary distance for the final domains for each refinement level for the experiment in Section \ref{sec:exp3}.
        We see a clear decrease in the Energy and the Discrete Hausdorff Complementary Distance as we refine the mesh.
    }
    \label{tab:exp3}
\end{table}
The value of $\|D\Phi_h\|_{L^\infty}$ and $\|D\Phi_h^{-1}\|_{L^\infty}$ along the iterations is found in Figure \ref{fig:exp3:DPhi}
\begin{figure}
    \centering
    \includegraphics[width = .45\linewidth]{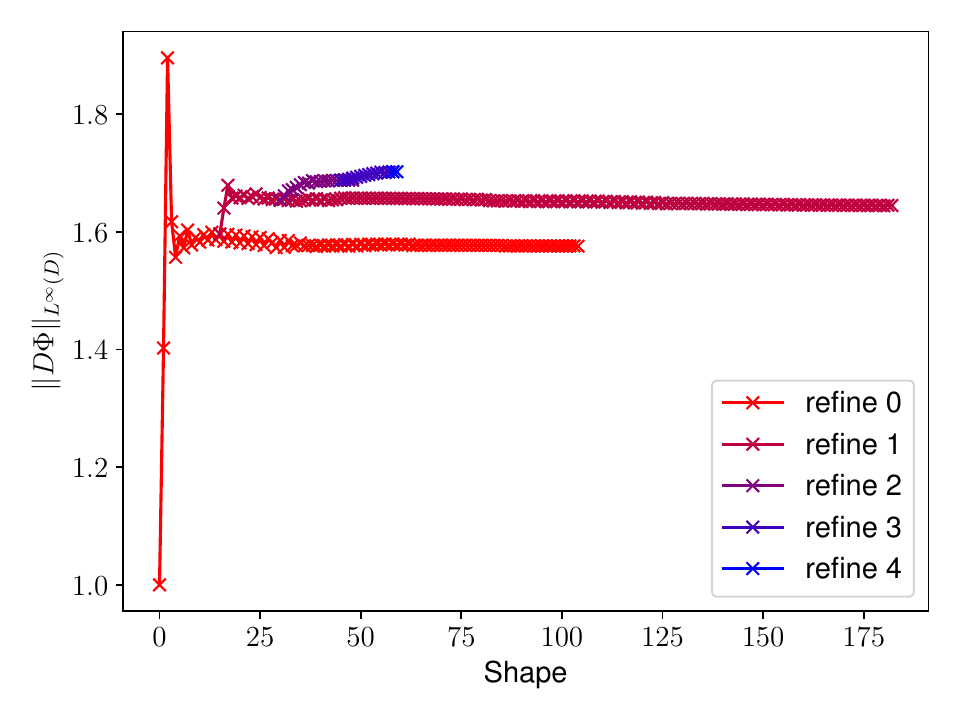}
    \hfill
    \includegraphics[width= .45\linewidth]{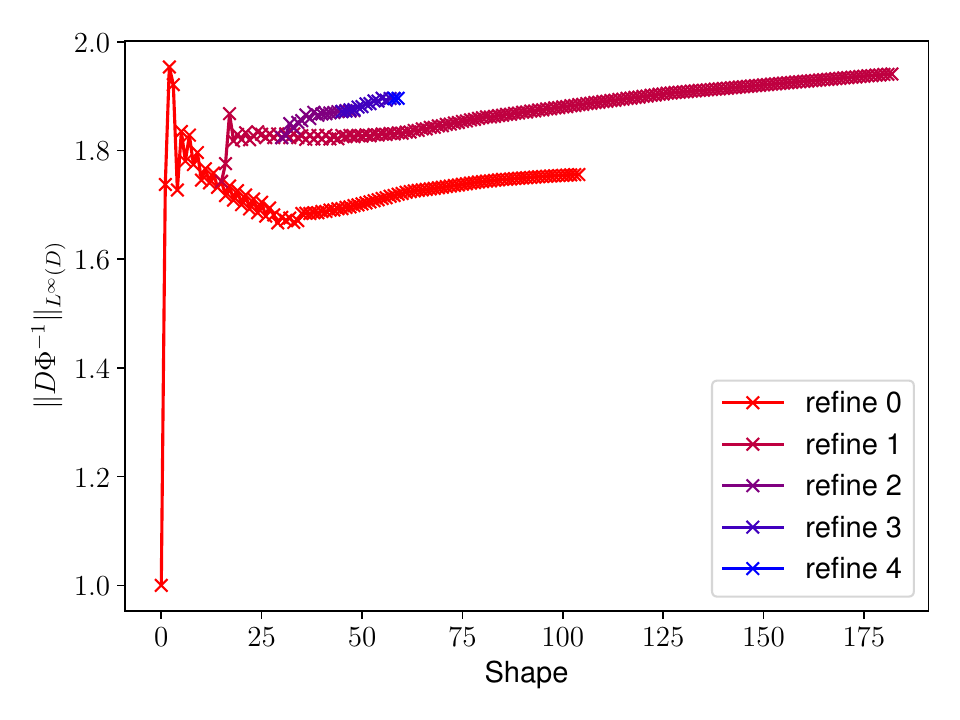}
    \caption{Values for $\|D\Phi_h\|_{L^\infty}$ (left) and $\|D\Phi_h^{-1}\|_{L^\infty}$ (right) along the shape iterates for the experiment in Section \ref{sec:exp3}.
        As in the previous experiments, the values of $\|D\Phi_h\|_{L^\infty}$ and $\|D\Phi_h^{-1}\|_{L^\infty}$ abruptly jump up at the start.
        We see for the coarser grids that $\|D\Phi_h^{-1}\|_{L^\infty}$ increases a large amount relative to the finer grids.
        For the finer grids, both values appear very stable with minimal increase beyond the initial jump.
    }
    \label{fig:exp3:DPhi}
\end{figure}
The final domains are given on the right of Figure \ref{fig:exp3:domains}.
    
\begin{figure}
    \centering
    \begin{minipage}{.49\linewidth}
        \includegraphics[width = .9 \linewidth]{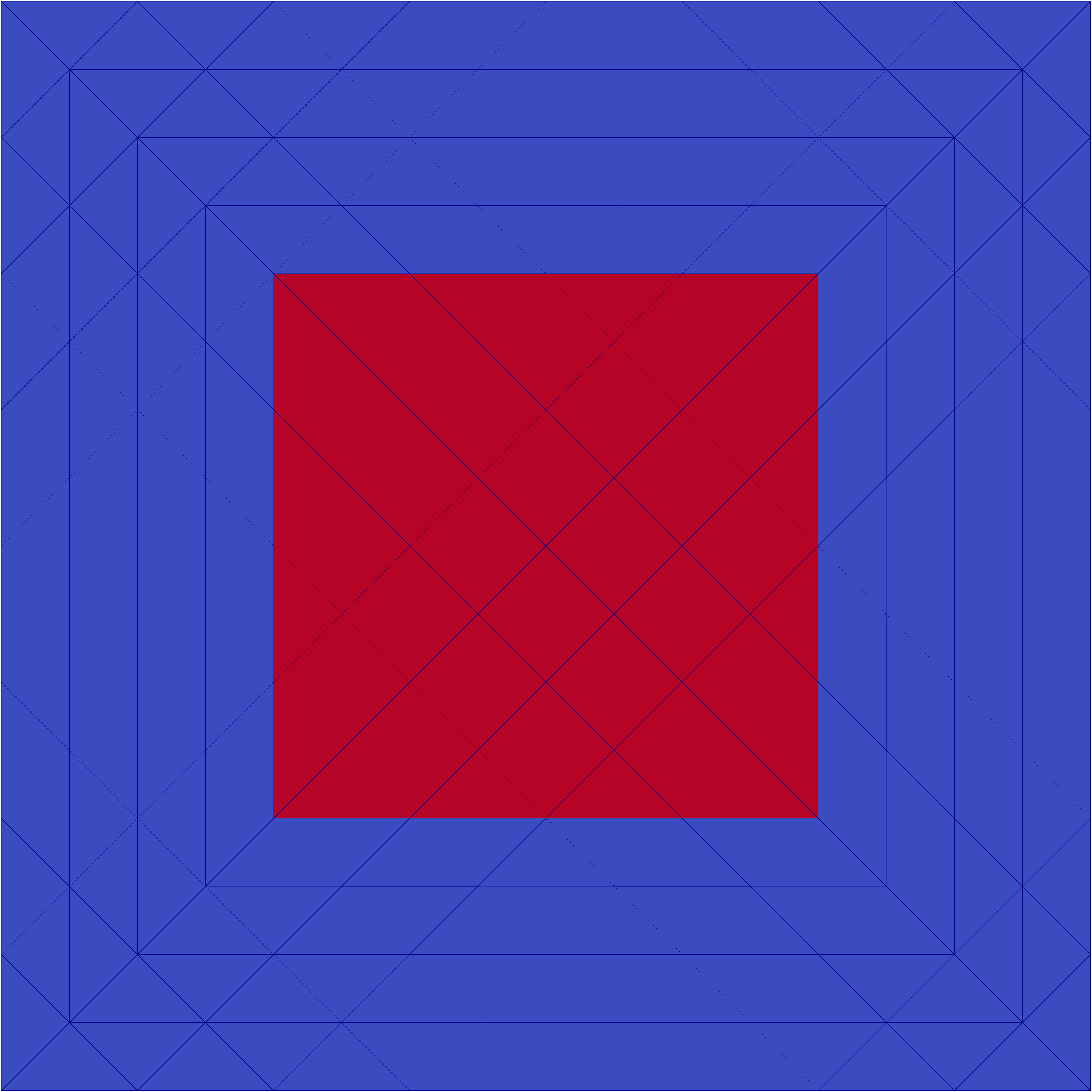}
    \end{minipage}
    \begin{minipage}{.49\linewidth}
        \settowidth{\imagewidth}{\includegraphics{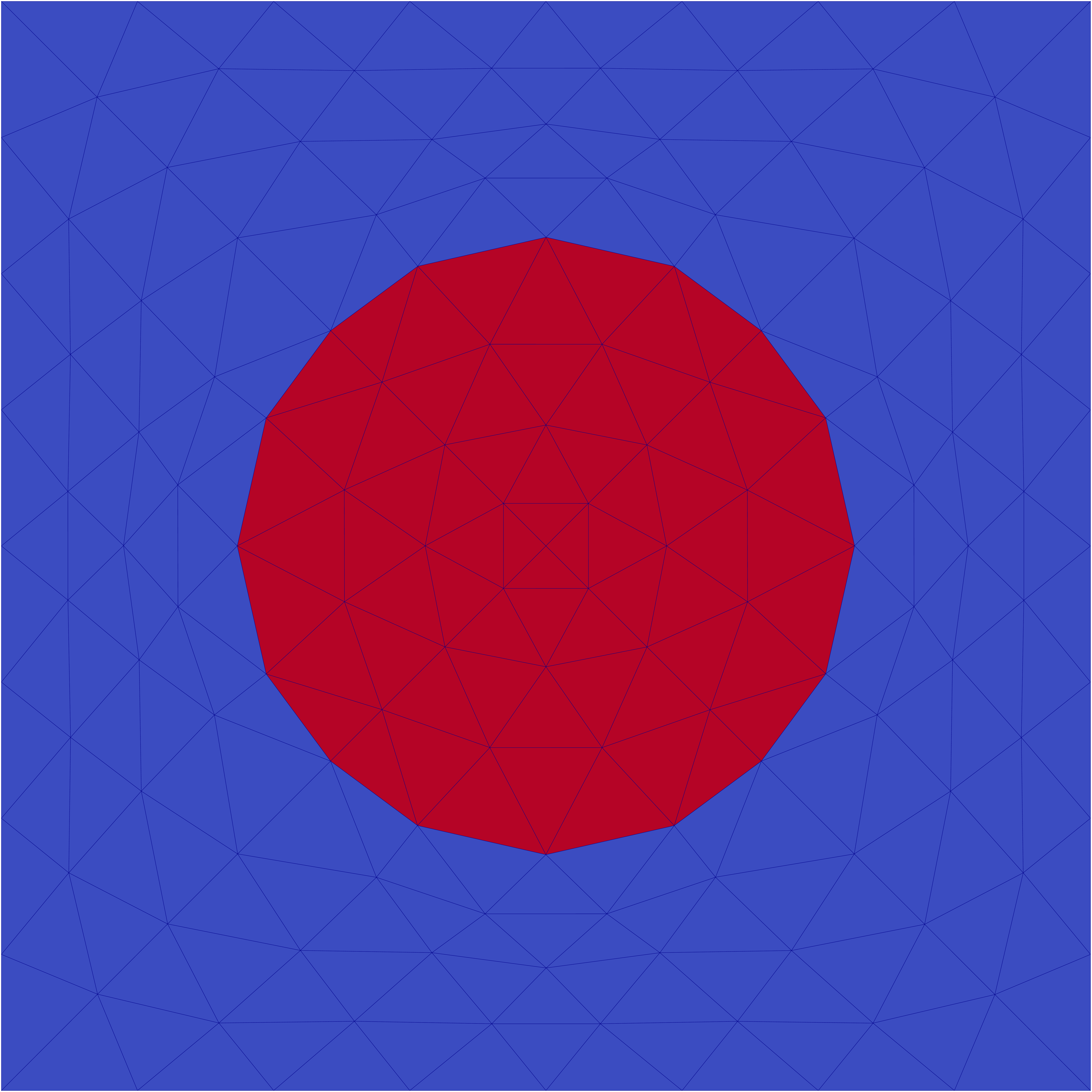}}
        \includegraphics[trim = 0 .5\imagewidth{} .5\imagewidth{} 0, clip,width = .45\linewidth]{experiment_3_refine=0_final_red_blue.pdf}
        \includegraphics[trim = .5\imagewidth{} .5\imagewidth{} 0 0, clip,width = .45\linewidth]{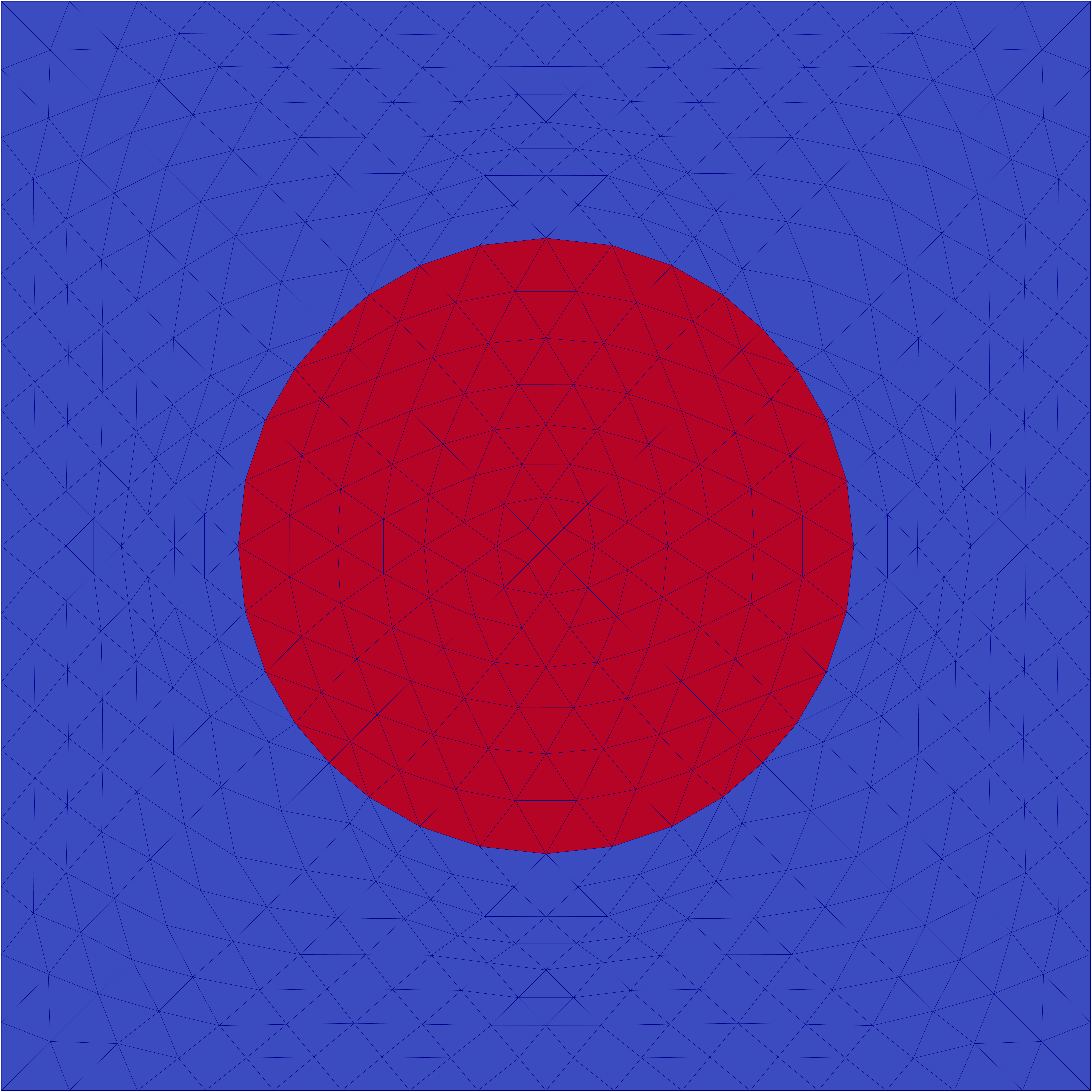}
    
    \noindent
        \includegraphics[trim = 0 0 .5\imagewidth{} .5\imagewidth{}, clip,width = .45\linewidth]{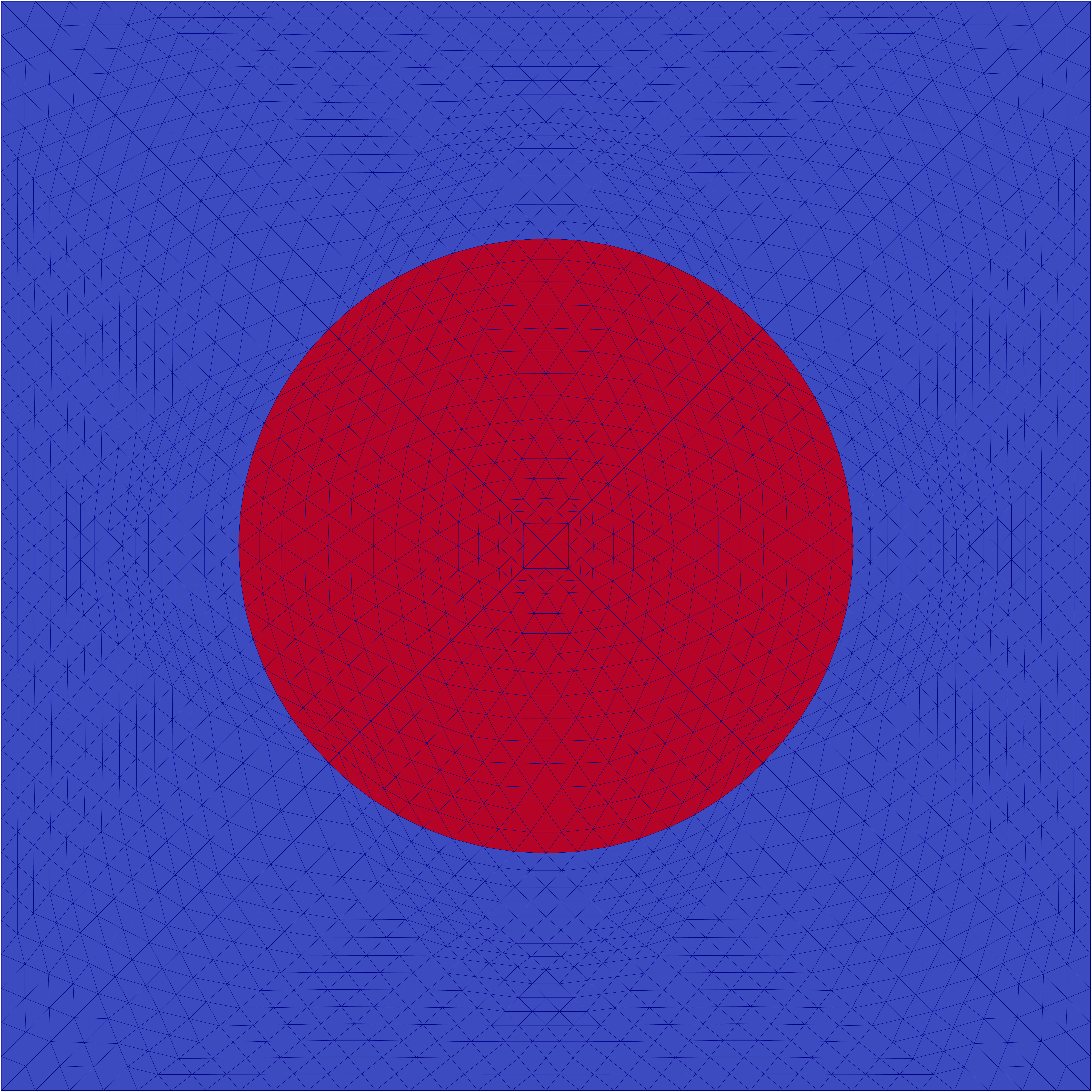}
        \includegraphics[trim = .5\imagewidth{} 0 0 .5\imagewidth{}, clip,width = .45\linewidth]
        {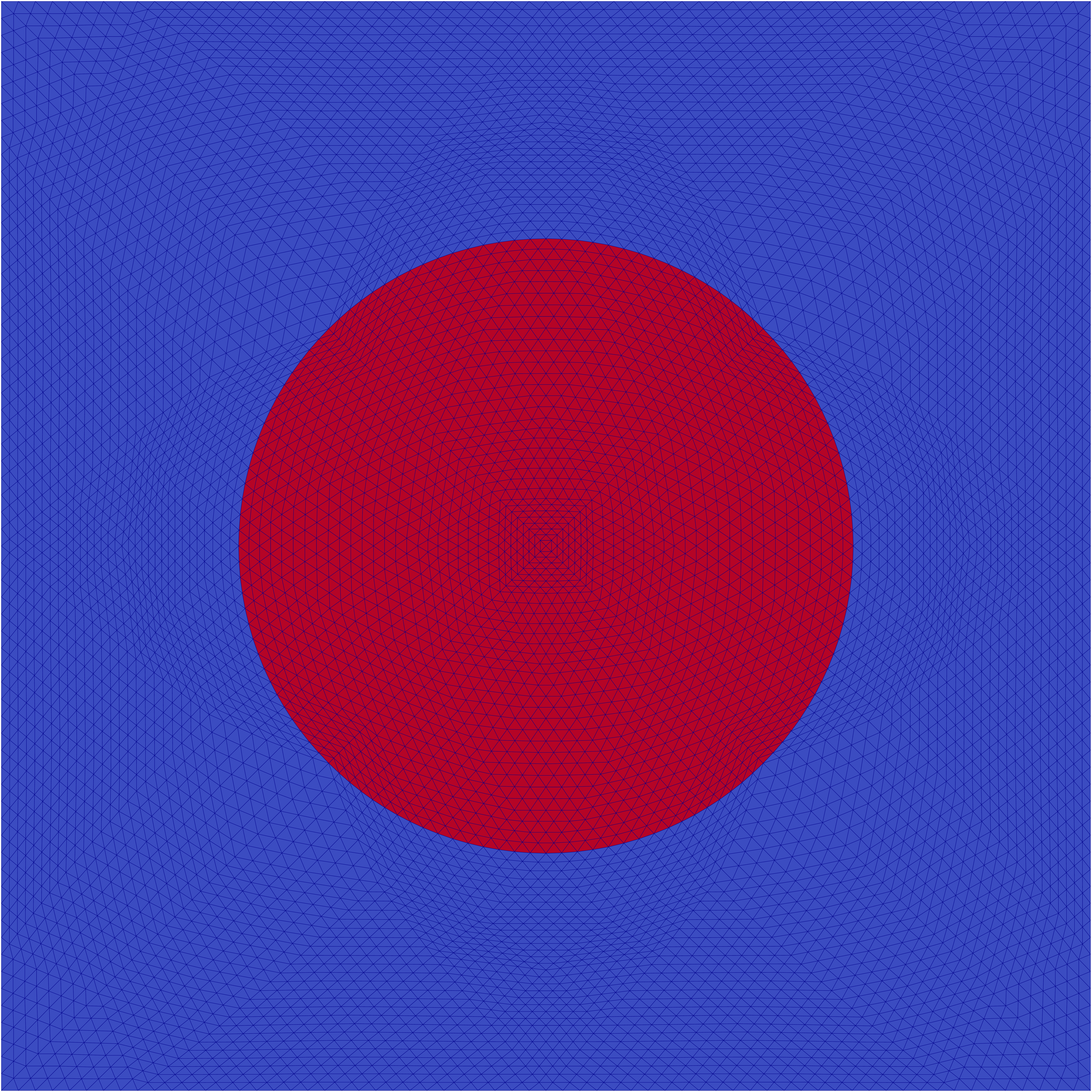}
    \end{minipage}
    \caption{Final domains for the experiment in Section \ref{sec:exp3}, refinements increasing from top left to bottom right.
    The most refined mesh is not shown due to the very small triangles.
    Taking the maximum over all triangles, the radius ratio for the initial grid is $\tilde{\sigma}_0 = \frac{1}{2} + \frac{1}{\sqrt{2}} \approx 1.207107$ and for the final, most fine, grid is $\tilde{\sigma}_f \approx 1.489403$.
    }
    \label{fig:exp3:domains}
\end{figure}

\section{Conclusions}
This work presents a numerical finite element solution framework for discrete PDE constrained shape optimisation in the $W^{1,\infty}$-topology based on the steepest descent method with Armijo step size rule.
In Theorem \ref{conv1}, global convergence of this method is shown for a fixed discretisation parameter.
Moreover, in Theorem \ref{conv3} it is shown that a sequence of discrete stationary shapes under assumption (A2) converges with respect to the Hausdorff complementary metric to a stationary point of the limit problem \eqref{prob0} for the mesh parameter tending to zero.
The proof of this result is based on the continuity of the Dirichlet problem with respect to the Hausdorff complementary metric in terms of $\gamma$--convergence. 

In future work our numerical concept and convergence analysis could be extended to the numerical investigation of shape Newton methods in the $W^{1,\infty}$-topology, which we addressed in \cite{DecHerHin23}, {or that of transformations which preserve some geometric quantity, as in \cite{HerPinSie23}}.

\printbibliography

\end{document}